\documentclass[12pt,reqno,a4paper]{amsart}
\usepackage{
    amsmath,  amsfonts, amssymb,  amsthm,   amscd, 
    gensymb,  graphicx, comment,  etoolbox, url,
    booktabs, stackrel, mathtools,enumitem, mathdots,  microtype, lmodern,    mathrsfs, graphicx, tikz,  longtable,tabularx, float, tikz, pst-node, tikz-cd, multirow, tabularx, amscd,  bm, array, makecell, diagbox, booktabs,ragged2e, caption, subcaption }
\usepackage{makecell,slashbox}
\usepackage{xcolor}
\usepackage[all]{xy}
\usepackage[pagebackref=true, colorlinks=true, linkcolor=blue, citecolor=blue, urlcolor=blue, breaklinks=true]{hyperref}
\usepackage[utf8]{inputenc}
\usepackage{microtype, fullpage, wrapfig,textcomp,mathrsfs,csquotes,fbb}
\usepackage[colorlinks=true, linkcolor=blue, citecolor=blue, urlcolor=blue, breaklinks=true]{hyperref}

\usepackage[capitalise]{cleveref}
\usepackage{todonotes}
\usetikzlibrary{positioning}
\usetikzlibrary{shapes,arrows.meta,calc}
\usetikzlibrary{arrows}

\newtheorem{theorem}{Theorem}[section]

\newtheorem{corollary}[theorem] {Corollary}
\newtheorem{lemma}[theorem]{Lemma}

\newtheorem{proposition}[theorem]{Proposition}

\newtheorem*{theorem*}{Theorem}
\newtheorem*{conjecture*}{Conjecture}
\theoremstyle{definition}
\newtheorem{definition}[theorem]{Definition}
\newtheorem{example}[theorem]{Example}
\newtheorem{remark}[theorem]{Remark}

\newtheorem*{notation*}{Notation}

\newcommand{\TC}{\mathrm{TC}}

\newcommand{\cl}{\mathrm{cl}}
\newcommand{\zcl}{\mathrm{zcl}}
\newcommand{\cat}{\mathrm{cat}}

\newcommand{\sct}{\mathrm{secat}}

\def\gsc#1#2{\mathrm{secat}_{#1}(#2)}

\def\ra{\rightarrow}
\def\xra{\xrightarrow}
\def\xla{\xleftarrow}

\makeatletter
\def\@tocline#1#2#3#4#5#6#7{\relax
  \ifnum #1>\c@tocdepth 
  \else
    \par \addpenalty\@secpenalty\addvspace{#2}%
    \begingroup \hyphenpenalty\@M
    \@ifempty{#4}{%
      \@tempdima\csname r@tocindent\number#1\endcsname\relax
    }{%
      \@tempdima#4\relax
    }%
    \parindent\z@ \leftskip#3\relax \advance\leftskip\@tempdima\relax
    \rightskip\@pnumwidth plus4em \parfillskip-\@pnumwidth
    #5\leavevmode\hskip-\@tempdima
      \ifcase #1
       \or\or \hskip 1em \or \hskip 2em \else \hskip 3em \fi%
      #6\nobreak\relax
    \hfill\hbox to\@pnumwidth{\@tocpagenum{#7}}\par
    \nobreak
    \endgroup
  \fi}
\makeatother

\newcolumntype{x}[1]{>{\centering\arraybackslash}p{#1}}

\begin{document}
\title[]{Equivariant relative sectional category \\ and induced invariants}
\author[N. Daundkar]{Navnath Daundkar}
\address{Indian Institute of Technology Madras, Chennai, India.}
\email{navnath@iitm.ac.in}
\author{Rekha Santhanam}
\address{Indian Institute of Technology Bombay, Mumbai, India}
\email{reksan@iitb.ac.in}
\author{Bittu Singh}
\address{Indian Institute of Technology Bombay, Mumbai, India}
\email{22d0781@iitb.ac.in}

\thanks{}

\begin{abstract} 
The relative sectional category, introduced by Gonz\'alez, Grant, and Vandembroucq for fibrations and later extended by Garc\'ia-Calcines to arbitrary maps, provides a common framework encompassing several numerical homotopy invariants, including the Lusternik--Schnirelmann category, the topological complexity of a map, and homotopic distance. In this paper, we introduce and study the equivariant analogue of the relative sectional category for $G$-maps. We establish its fundamental homotopy-theoretic properties, including comparison, product, and composition inequalities, as well as its behavior under changes of domain and codomain. As applications, we introduce and investigate equivariant analogues of the topological complexity of a map, in the sense of Scott and Murillo--Wu, and the equivariant Lusternik--Schnirelmann category of a map. Several examples are provided to illustrate the theory and demonstrate that these invariants extend the corresponding classical equivariant notions.
\end{abstract}

\keywords{Relative sectional category, LS category, Topological complexity, Homotopic distance, equivariant sectional category}
\subjclass[2020]{55M30, 55M99, 55R91}
\maketitle

\section{Introduction}
Topological complexity, introduced by Farber \cite{FarberTC}, is a numerical homotopy invariant that measures the complexity of motion planning algorithms in robotics. It is defined as the sectional category of the free path space fibration and has inspired numerous generalizations over the past two decades.
A classical invariant closely related to topological complexity is the Lusternik--Schnirelmann (LS) category, introduced by Lusternik and Schnirelmann in \cite{LScat}. Roughly speaking, the LS-category of a space is the smallest number of contractible pieces into which the space can be decomposed.

A common framework underlying many of these invariants is the \emph{sectional category} of a map, introduced by Schwarz \cite{Sva} for fibrations and later extended by Bernstein and Ganea \cite{secat} to arbitrary maps. Besides recovering classical invariants such as the Lusternik--Schnirelmann category and topological complexity, sectional category provides a unified approach to a wide range of numerical homotopy invariants.

An important extension of this notion is the \emph{relative sectional category}, introduced by Gonz'alez, Grant, and Vandembroucq \cite{Hopf_inv_Gon_Luc_Grant} for fibrations and later generalized by Garc\'ia-Calcines \cite{Calcines_relTC} to arbitrary maps. This invariant unifies several homotopy invariants of maps, including the LS-category of a map, Scott's topological complexity of a map, and the homotopic distance of Mac'ias-Virg'os and Mosquera-Lois \cite{MACÍAS–VIRGÓS_MOSQUERA–LOIS_2022}. Moreover, it enjoys many of the formal properties of the classical sectional category, including product, composition, and comparison inequalities.

The presence of symmetries naturally led to the development of equivariant versions of these invariants. Fadell \cite{Fadelleqcat} introduced the equivariant LS-category, which was subsequently studied by Marzantowicz \cite{Eqlscategory}. Later, Colman and Grant \cite{Colman_Grant_Eqtc} introduced equivariant topological complexity by defining it as the equivariant sectional category of equivariant fibrations. More recently, the first author and Garc\'ia-Calcines developed the theory of equivariant homotopic distance \cite{D-GC}, extending the homotopic distance of maps to the equivariant setting and establishing several of its fundamental properties. These developments significantly broadened the scope of the LS-category, topological complexity, and related homotopy invariants, extending their applicability to the setting of $G$-spaces equipped with actions of compact Lie groups.

In this paper, we introduce the notion of \emph{equivariant relative sectional category} and establish its fundamental homotopy-theoretic properties. As applications, we introduce and study equivariant analogues of several important invariants arising from the relative sectional category, including the equivariant topological complexity of a map and the equivariant LS-category of a map.

The paper is organized as follows. In Section~\ref{sec: prelim}, we recall the background material required throughout the paper. We begin with a brief review of equivariant homotopy pullbacks, followed by the notions of equivariant sectional category and relative sectional category, together with the fundamental properties that will be used in the sequel. These preliminaries provide the framework for developing the theory of equivariant relative sectional category.

Section~\ref{sec: eq rel secat} introduces the equivariant relative sectional category and establishes its basic properties. We investigate its behaviour under natural constructions and prove a Ganea-type characterization. In Section~\ref{sec: eq tc of eq maps}, we apply this theory to the study of the equivariant topological complexity of equivariant maps. We first introduce the equivariant Lusternik--Schnirelmann category of equivariant maps and then study two notions of equivariant topological complexity of equivariant maps, namely Scott's version and the Murillo--Wu version. In \ref{subsec: eq tc MW version}, we prove that the Murillo--Wu version is equivalent to Scott's version. Finally, Section~\ref{subsec: examples} concludes the paper with several examples illustrating the theory and highlighting the relationship between these invariants.
\section{Preliminaries}\label{sec: prelim}
Let  $G$ be a compact Hausdorff topological group, and a $G$-space is a Hausdorff space $X$  equipped with a continuous left action of $G$.
 Let $X,Y$ be $G$-spaces, then the product $X\times Y$ gets a canonical $G$-action defined by $g(x,y)=(gx,gy)$ for all $(x,y) \in X \times Y$,  called the diagonal action of $G$ on $X\times Y$.
 
 The \emph{stabilizer/isotropy group} $G_x$  is defined by the closed subgroup of $G$ containing the elements $g\in G$ such that $gx=x.$ The \emph{orbit } $\mathcal O_x$ set is defined as  $Gx$, that is, the subspace of $X$ containing the elements $gx$ for all $g\in G.$ The orbit-stabilizer theorem provides a homeomorphism $G/G_x\cong \mathcal O_x.$

 For a subgroup $H\leq G$, the \emph{$H$-fixed point set} $X^H$ of $X$ is defined to be the subspace of all points $x\in X$ such that $hx=x$ for all $h\in H.$ 

Let $f,h :X\ra Y$ be two $G$-maps.
The maps $f,h$ are said to be \emph{$G$-homotopic} if there exists a $G$-map $F\colon X\times I\ra Y$ such that $F(-,0)=f$ and $F(-,1)=h$; where the unit interval $I$ is trivial $G$-space and $G$-acts on $X\times I$ diagonally.
\subsection{Equivariant homotopy pullback}
We recall the definition of homotopy pullback and pullback in the equivariant contexts. 
\begin{definition}
    Let $K\xra f B \xleftarrow p E$ be a cospan of $G$-maps. A \emph{$G$-homotopy pullback} is a $G$-space $P_{f,p}$ with $G$-maps $\tilde f$, $\tilde p$ such that $p\circ \tilde f\simeq_G f\circ \tilde p$ and it satisfies the property that for any $G$-space $Q$ with $G$-maps $\alpha, \beta$ with $p\circ \beta \simeq_G f\circ \alpha$, there exists a $G$-map $\lambda: Q\to P_{f,p}$ for which $\tilde f\circ\lambda \simeq_G \beta,$ $\tilde p\circ\lambda \simeq_G \alpha.$
\[
\begin{tikzcd}
Q \arrow[drr, bend left=15, start anchor=east, end anchor=north, "\beta"] 
  \arrow[ddr, bend right=15, start anchor=south, end anchor=west, "\alpha"'] 
  \arrow[dr, dashed, "\lambda"] & & \\
 & P_{f,p} \arrow[r, "\tilde f"'] \arrow[d, "\tilde p"'] & E \arrow[d, "p"] \\
 & K \arrow[r, "f"'] & B.
\end{tikzcd}
\]
\end{definition}
For a $G$-space $B$, 
the corresponding path space $B^I$ admits a $G$ action via $(g\cdot \gamma)(t)=g(\gamma(t))$. 
The \emph{standard $G$-homotopy pullback} is a square where $$P_{f,p}=\{(k,\gamma, e)\in K \times B^I \times E \mid \gamma(0)=f(k), \gamma(1)=p(e)\},$$ equipped with 
the diagonal $G$-action. The maps $\tilde f$ and $\tilde p$ are defined as $\tilde f(k,\gamma,e)=e$ and $\tilde p(k,\gamma ,e)=k$.
In this case, if $F\colon  Q \times I \ra B$ (or equivalently $F^{\mathrm{ad}}: Q  \ra B^I$) is a $G$-homotopy from $f\circ\alpha$ to $p\circ\beta$, then $\lambda:Q \ra P_{f,p}$ can be defined by $\lambda(q)=(\alpha(q),F^{\mathrm{ad}}(q), \beta(q))$.

We recall the well-known result establishing when a $G$-pullback is equivalent to a $G$-homotopy pullback.  
\begin{proposition}\label{prop: hpb=pb}
    If $f$ or $p$ is a $G$-fibration, then $G$-pullback square is a $G$-homotopy pullback square.
\end{proposition}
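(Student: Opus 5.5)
By symmetry of the definition (swap the roles of $K\xra f B$ and $E\xra p B$), it suffices to treat the case where $p\colon E\ra B$ is a $G$-fibration. Let $P=K\times_B E=\{(k,e)\mid f(k)=p(e)\}$ be the strict pullback, with the diagonal $G$-action and projections $\tilde p(k,e)=k$, $\tilde f(k,e)=e$. The square commutes strictly, so $p\circ\tilde f\simeq_G f\circ\tilde p$ holds trivially. The work is the weak universal property. Given a $G$-space $Q$ and $G$-maps $\alpha\colon Q\ra K$, $\beta\colon Q\ra E$ with a $G$-homotopy $F\colon Q\times I\ra B$ from $p\circ\beta$ to $f\circ\alpha$, I will produce a $G$-map $\lambda\colon Q\ra P$ with $\tilde p\lambda\simeq_G\alpha$ and $\tilde f\lambda\simeq_G\beta$.

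The key step is the equivariant homotopy lifting property of $p$. Here $Q\times I$ carries the diagonal action with trivial action on $I$, so $F$ is a $G$-map. Together with the $G$-map $\beta$ lifting $F(-,0)=p\beta$, this gives a $G$-map $\widetilde F\colon Q\times I\ra E$ with $\widetilde F(-,0)=\beta$ and $p\circ\widetilde F=F$. Then $p\circ\widetilde F(-,1)=F(-,1)=f\circ\alpha$. Hence
\[
\lambda(q)=\bigl(\alpha(q),\widetilde F(q,1)\bigr)
\]
lies in $P$. It is a $G$-map because $\alpha$ and $\widetilde F(-,1)$ are. By construction $\tilde p\lambda=\alpha$ strictly, and $\widetilde F$ itself is a $G$-homotopy from $\beta$ to $\tilde f\lambda$.

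To relate this to the standard $G$-homotopy pullback, I would check that the canonical comparison map $P\ra P_{f,p}$, $(k,e)\mapsto(k,c_{f(k)},e)$, is a $G$-homotopy equivalence over $K$ and $E$ up to $G$-homotopy. The inverse uses an equivariant lifting function for $p$, which is essentially the argument above applied with $Q=P_{f,p}$ and the tautological homotopy $\gamma$. The homotopies use the contraction of paths $\gamma_s(t)=\gamma(st)$ (suitably reparametrized), which is $G$-equivariant since $G$ acts pointwise on $B^I$.

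I expect the main obstacle to be bookkeeping rather than anything conceptual. First, one must make sure the definition of $G$-fibration in use gives $G$-equivariant lifts for homotopies defined on arbitrary $G$-spaces $Q\times I$, with $I$ carrying the trivial action. Second, the direction of the homotopy must match the lifting, which is why I take $F$ from $p\beta$ to $f\alpha$. If the lifting property is only available for homotopies starting at $f\alpha$, I would reverse $F$.

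In the case where $f$ rather than $p$ is the $G$-fibration, I lift $F$ along $f$ starting at $\alpha$ instead, and argue symmetrically.
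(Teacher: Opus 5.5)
The paper gives no proof of this proposition; it recalls it as a well-known fact, so there is no argument in the text to compare yours against. Your proof is correct and is the standard one.

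The paper's definition of a $G$-homotopy pullback asks only for the existence of some $G$-map $\lambda$ with $\tilde p\lambda\simeq_G\alpha$ and $\tilde f\lambda\simeq_G\beta$, with no uniqueness. So the lifting argument in your second paragraph, together with strict commutativity of the square, already completes the proof. It uses that the paper's $G$-fibrations have the equivariant homotopy lifting property for all $G$-spaces (they are of Hurewicz type, as opposed to the Serre $G$-fibrations introduced later). This weak property is also exactly what Proposition~\ref{prop: rel secat=secat hpb} and Corollary~\ref{cor: secat of fibration} use downstream.

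Your comparison with the standard model $P_{f,p}$ is therefore not needed for the statement as formulated. It proves something stronger: the strict pullback is $G$-homotopy equivalent to the standard one, compatibly with the projections. That would matter only under a stricter, uniqueness-type notion of homotopy pullback.

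One small slip concerns the case where $f$ is the $G$-fibration. There you must lift the reversed homotopy $(q,t)\mapsto F(q,1-t)$, which starts at $f\circ\alpha$, along $f$ starting at $\alpha$. Then set $\lambda(q)=(\widetilde F(q,1),\beta(q))$. As written, you say you lift $F$ itself, but $F$ starts at $p\circ\beta$, not at $f\circ\alpha$.
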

\subsection{Equivariant sectional category}
The sectional category of a Hurewicz fibration was originally introduced by \v{S}varc in  \cite{Sva}. Later, Colman and Grant \cite{Colman_Grant_Eqtc} extended this concept to the equivariant setting, defining the equivariant sectional category. 
\begin{definition}[{\cite[Definition 4.1]{Colman_Grant_Eqtc}}]\label{def:eqsecat}
The \emph{equivariant sectional category} of a $G$-map $p\colon E\to B$, denoted $\sct_G(p)$, is the least nonnegative integer $n$ such that there is a cover of $B$ by $G$-invariant open subsets $\{U_0, \ldots, U_n\}$ and, for each $i$, a local $G$-homotopy section $s_i \colon U_i \to E$ of $p$; that is, a $G$-map $s_i$  such that the following diagram commutes up to $G$-homotopy
$$\xymatrix{
{U_i} \ar@{^(->}[rr]^{\iota_{U_i}} \ar[dr]_{s_i} & & {B} \\
 & {E}, \ar[ur]_p &  }$$
\noindent where $\iota_{U_i}\colon  U_i \hookrightarrow B$ is the inclusion map. If no such integer exists, we set $\sct_G(p):=\infty.$ 
\end{definition}
Another useful feature, which will play a role in later arguments, is the subadditivity of the equivariant sectional category.  Its proof can be found in \cite[Prop. 2.11]{arora2024equivariant}.
\begin{proposition}\label{sub}
Let $p_i\colon E_i\to B_i$ be a $G$-fibration, for $i\in \{1,2\}$.  
Then the product map
\[
p_1\times p_2\colon E_1\times E_2 \longrightarrow B_1\times B_2
\]
is a $G$-fibration, where $G$ acts diagonally on both $E_1\times E_2$ and $B_1\times B_2$.  
Moreover, if $B_1\times B_2$ is completely normal and $B_1,B_2$ are Hausdorff, then
\[
\sct _G(p_1\times p_2)\leq \sct _G(p_1)+\sct _G(p_2).
\]
\end{proposition}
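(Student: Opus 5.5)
The statement has two parts: the product of two $G$-fibrations is a $G$-fibration, and the equivariant sectional category is subadditive under products. I would prove them in that order. Here $G$ acts diagonally on $E_1\times E_2$ and $B_1\times B_2$.

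\textbf{The product is a $G$-fibration.} I take a $G$-fibration to mean a $G$-map with the $G$-homotopy lifting property with respect to all $G$-spaces. Suppose given a $G$-space $Z$, a $G$-map $h\colon Z\to E_1\times E_2$, and a $G$-homotopy $H\colon Z\times I\to B_1\times B_2$ starting at $(p_1\times p_2)\circ h$. Write $h=(h_1,h_2)$ and $H=(H_1,H_2)$ using the projections, which are $G$-maps for the diagonal action. Since $p_i$ is a $G$-fibration, each pair $(h_i,H_i)$ has a $G$-lift $\tilde H_i\colon Z\times I\to E_i$. Then $(\tilde H_1,\tilde H_2)$ is a $G$-map, because it is equivariant in each coordinate, and it lifts $H$ starting at $h$. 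This part is routine.

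\textbf{Setting up the covers.} Let $\sct_G(p_1)=n$ and $\sct_G(p_2)=m$. Choose $G$-invariant open covers $\{U_i\}_{i=0}^n$ of $B_1$ and $\{V_j\}_{j=0}^m$ of $B_2$ with local $G$-homotopy sections $s_i$ and $t_j$. Because $p_i$ is a $G$-fibration, the $G$-homotopy lifting property turns each homotopy section into a strict $G$-section. This is convenient but not essential. Each product $W_{ij}=U_i\times V_j$ is $G$-invariant, open in $B_1\times B_2$, and carries the $G$-homotopy section $s_i\times t_j$. So $\{W_{ij}\}$ is a cover by $(n+1)(m+1)$ sets, and the task is to reduce it to $n+m+1$ sets.

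\textbf{Reducing to $n+m+1$ sets.} I would use the standard technique from the non-equivariant proof (Schwarz, and Farber's product inequality for $\TC$), run equivariantly. First I would produce $G$-invariant data suitable for this reduction: either $G$-invariant partitions of unity $\{\phi_i\}$ on $B_1$ and $\{\psi_j\}$ on $B_2$ subordinate to the covers, or, following Farber's set-theoretic approach, $G$-invariant closed or open shrinkings. One way to get invariance is to start from ordinary functions and average them over $G$ using Haar measure, since $G$ is compact. Then for $k=0,\dots,n+m$ I would set
\[
W_k=\bigcup_{i+j=k}\bigl\{(x,y)\,:\,\phi_i(x)\psi_j(y)>\phi_{i'}(x)\psi_{j'}(y)\ \text{for all } (i',j') \text{ with } i'+j'=k,\ (i',j')\neq(i,j)\bigr\}.
\]
The pieces of $W_k$ indexed by different pairs $(i,j)$ are pairwise disjoint and open. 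Each piece is contained in $U_i\times V_j$ and is $G$-invariant, because the functions are $G$-invariant. So $W_k$ is a disjoint union of $G$-invariant open sets, and the sections $s_i\times t_j$ glue to a $G$-homotopy section over $W_k$. It remains to check that the $W_k$ cover $B_1\times B_2$. At a point $(x,y)$, let $i$ be the largest index with $\phi_i(x)$ maximal, and $j$ the largest index with $\psi_j(y)$ maximal. Then the pair $(i,j)$ should strictly dominate in its diagonal $i+j=k$. This is Farber's argument and I would verify it carefully.

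\textbf{Main obstacle.} The key step is producing $G$-invariant partitions of unity, or shrinkings, on spaces that are only Hausdorff. The hypothesis that $B_1\times B_2$ is completely normal is presumably there to allow the closed-set version. In that version one takes $G$-invariant closed sets $A_i\subset U_i$ covering $B_1$, which exist by normality and $G$-saturation (the orbit map is closed for compact $G$). One then forms the sets $\bigcup_{i+j=k}(A_i\setminus\cdots)\times(\cdots)$ as in Farber's Theorem 11. Complete normality of the product is what lets one separate the resulting disjoint pieces by disjoint $G$-invariant open neighbourhoods. Saturation $GU=\bigcup_g gU$ of open sets preserves openness, and for compact $G$ it also preserves closedness. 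So every separation step can be made $G$-invariant, and I expect this bookkeeping to be the main technical work.
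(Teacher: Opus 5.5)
The paper gives no argument of its own for this statement; it cites \cite[Prop.~2.11]{arora2024equivariant}. So your proposal has to stand on its own. Your plan is the right kind of argument: coordinatewise lifting for the fibration part (which is fine), then reducing the products $U_i\times V_j$ to $n+m+1$ invariant open sets using invariant partitions of unity. However, the reduction as you wrote it contains an actual error, and one step is left unjustified.

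\textbf{The error.} For $k=0$ and $k=n+m$ the diagonal $\{(i,j): i+j=k\}$ is a single pair. The same holds for every $k$ if $n=0$ or $m=0$.
\begin{itemize}
\item For such $k$ the condition defining your piece is vacuous, so $W_0=W_{n+m}=B_1\times B_2$. Then $W_0\not\subseteq U_0\times V_0$, and $s_0\times t_0$ is not defined on it.
\item Containment in $U_i\times V_j$ only follows from $\phi_i(x)\psi_j(y)>\phi_{i'}(x)\psi_{j'}(y)\geq 0$, which needs a competing pair.
\end{itemize}
The fix is to add the condition $\phi_i(x)\psi_j(y)>0$ to the definition of each piece. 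This does not affect openness, $G$-invariance or disjointness. Your covering argument then goes through, and you do not need to hedge on it. Let $i$ be the largest index maximizing $\phi_\bullet(x)$ and $j$ the largest index maximizing $\psi_\bullet(y)$. Take $(i',j')\neq(i,j)$ with $i'+j'=i+j$.
\begin{itemize}
\item If $i'>i$, then $\phi_{i'}(x)\psi_{j'}(y)\leq\phi_{i'}(x)\psi_j(y)<\phi_i(x)\psi_j(y)$.
\item If $j'>j$, the symmetric estimate applies.
\end{itemize}
Moreover $\phi_i(x)\psi_j(y)\geq \frac{1}{(n+1)(m+1)}>0$.

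\textbf{The unjustified step.} A finite open cover of a space that is only Hausdorff need not admit a subordinate partition of unity, so your construction of $\phi_i,\psi_j$ is not justified as written. What you need is normality of $B_1$ and $B_2$, and this follows from the hypothesis. Complete normality is hereditary, and $B_1\cong B_1\times\{b\}\subseteq B_1\times B_2$; the case of an empty factor is trivial.
\begin{itemize}
\item The shrinking lemma and Urysohn's lemma then give ordinary partitions of unity with $\{\phi_i>0\}\subseteq U_i$.
\item Haar averaging $x\mapsto\int_G\phi_i(gx)\,dg$ makes them $G$-invariant. Continuity uses compactness of $G$, and the positivity set stays inside $U_i$ because $U_i$ is $G$-invariant.
\item Alternatively, pull back a partition of unity from $B_i/G$, which is normal because the orbit map is closed for compact $G$.
\end{itemize}
With these functions the pieces of each $W_k$ are automatically open and pairwise disjoint. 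No separation argument is needed, so your speculative ``closed-set version'' paragraph can be dropped. Your route in fact uses only normality of the factors, which is weaker than complete normality of $B_1\times B_2$.
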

\bigskip
Fadell \cite{Fadelleqcat} introduced the notion of the $G$-equivariant Lusternik–Schnirelmann (LS) category for $G$-spaces. 
This concept was further developed by Marzantowicz \cite{Eqlscategory}, Clapp and Puppe \cite{Clapp-Puppe}, Colman \cite{Colmaneqcat}, and Angel, Colman, Grant, and Oprea \cite{Angel-Colman-Grant-Oprea-Moritainv-eqcat}. 
This homotopy invariant of a $G$-space $X$ is denoted by $\cat_G(X)$.  

Before stating the definition, we recall the notion of a $G$-categorical set. 
A $G$-invariant open subset $U \subseteq X$ is called \emph{$G$-categorical} if the inclusion $\iota_{U}\colon U \hookrightarrow X$ is $G$-homotopic to a $G$-map whose image lies entirely within a single orbit.  
\begin{definition}
For a $G$-space $X$, the \emph{$G$-equivariant LS category} $\cat_G(X)$ is the least nonnegative integer $n$ such that $X$ can be covered by $n+1$ $G$-categorical sets. 
If no such integer exists, we set $\cat_G(X) := \infty$.
\end{definition}
Although standard, we briefly recall some basic facts about fixed points under closed subgroups, 
both to fix notation and for later use. 
Let $H$ be a closed subgroup of $G$, and let $X$ be a $G$-space. 
The set of $H$-fixed points of $X$ is denoted by $X^H$ and defined as
\[
X^H := \{x \in X \mid h \cdot x = x \text{ for all } h \in H\}.
\]
If $f \colon X \to Y$ is a $G$-map, then $f(X^H) \subseteq Y^H$, so the restriction of $f$ to $X^H$ induces a map 
\[
f^H \colon X^H \to Y^H.
\]
The fixed-point construction also allows us to formulate a natural equivariant analogue of connectedness:
\begin{definition}
A $G$-space $X$ is said to be \emph{$G$-connected} if, for every closed subgroup $H$ of $G$, the fixed-point set $X^H$ is path-connected.
\end{definition}
This notion provides a useful bridge between equivariant sectional category and equivariant LS category:
\begin{remark}\label{fund}
If $x_0\in X$ is a fixed point of the $G$-action and $X$ is $G$-connected, then the equivariant sectional category of the inclusion 
\(\iota\colon \{x_0\} \to X\) satisfies
\[
\sct_G(\iota\colon \{x_0\} \to X) = \cat_G(X);
\]
see \cite[Corollary 4.7]{Colman_Grant_Eqtc}.  
Moreover, when $G$ is the trivial group, this recovers the classical invariant, i.e.\ \(\cat_G(X)=\cat(X)\).
\end{remark}
We now recall two standard bounds for the equivariant sectional category.  
First, we record a slightly modified version of the equivariant homotopy dimension–connectivity upper bound, as stated in \cite{G-symmetrized,arora2024equivariant}. 
Throughout, a \emph{Serre $G$-fibration} will mean a $G$-map $p \colon E \to B$ satisfying the $G$-homotopy lifting property with respect to all $G$-CW complexes. 
Clearly, every  $G$-fibration is in particular a Serre $G$-fibration.
\begin{proposition}\label{connect}
Let $p \colon E \to B$ be a Serre $G$-fibration, and suppose that $B$ is a $G$-CW complex with $\dim(B) \geq 2$. 
Assume that for every closed subgroup $H \subseteq G$, the fixed-point map $p^H \colon E^H \to B^H$ is an $m$-equivalence. 
Then
\[
\sct_G(p) \;<\; \frac{\mathrm{hdim}_G(B) + 1}{m + 1}.
\]
Here $\mathrm{hdim}_G(B)$ denotes the minimal dimension $\dim(B')$ such that $B'$ is a $G$-CW complex $G$-homotopy equivalent to $B$.
\end{proposition}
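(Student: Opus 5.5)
This is the equivariant dimension–connectivity bound, and I will prove it by equivariant obstruction theory applied to the fibrewise join of $p$. It is essentially the argument of \cite{G-symmetrized,arora2024equivariant}. The first step is to replace $B$ by a $G$-CW complex $B'$ of dimension $\mathrm{hdim}_G(B)$ that is $G$-homotopy equivalent to $B$. We pull $p$ back along the equivalence $B'\to B$. The pullback is again a Serre $G$-fibration. Since taking $H$-fixed points commutes with pullbacks, its fixed-point maps are still $m$-equivalences. The equivariant sectional category is a $G$-homotopy invariant of the base, in the sense that it is unchanged under pulling back along a $G$-homotopy equivalence. So we may assume $\dim B=\mathrm{hdim}_G(B)=:d$.

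Next I use the equivariant analogue of Schwarz's characterization: $\sct_G(p)\le n$ if and only if the $(n+1)$-fold fibrewise join $p_n\colon J^n_B E\to B$ admits a $G$-section. Here $G$ acts diagonally on the join coordinates, and $p_n$ is again a Serre $G$-fibration. I take this from Colman--Grant \cite{Colman_Grant_Eqtc} and \cite{arora2024equivariant}. For the direction we need (a section gives a cover), we set $U_i=\{t_i\neq 0\}$; these sets are $G$-invariant open, and the join coordinates give local $G$-sections. This step needs $B$ to be $G$-normal, which holds for $G$-CW complexes.

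The main step is connectivity of the fibres. For a closed subgroup $H$ we have $(J^n_BE)^H=J^n_{B^H}(E^H)$, because $G$ acts trivially on the join parameters. The map $p^H$ is an $m$-equivalence, so its homotopy fibre over each point $b\in B^H$ is $(m-1)$-connected. The $(n+1)$-fold join of $(m-1)$-connected spaces is $\big((n+1)(m+1)-2\big)$-connected. Hence the fibre of $p_n^H$ is $\big((n+1)(m+1)-2\big)$-connected. Some care is needed here because $p$ is only a Serre $G$-fibration. The fixed-point map $p^H$ is then a Serre fibration, so actual and homotopy fibres agree up to weak equivalence, and the join of fibres computes the fibre of the fibrewise join.

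Finally I build the section cell by cell. Let $n$ be the least integer with $d<(n+1)(m+1)$, which is exactly the inequality claimed in the statement. Consider an equivariant cell $G/H\times D^k$ of $B$ with $k\le d$. Extending a section over it amounts to extending a map $S^{k-1}\to F^H$, where $F^H$ is the fibre of $p_n^H$ over a point of $B^H$. This is possible because $k-1\le d-1\le (n+1)(m+1)-2$, which is the connectivity of $F^H$. The lifts are produced by the $G$-homotopy lifting property with respect to the $G$-CW pairs $(G/H\times D^k,G/H\times S^{k-1})$. So $p_n$ has a $G$-section and $\sct_G(p)\le n$. I expect the main obstacle to be the bookkeeping in the fixed-point join identification and its connectivity, together with justifying the join characterization for Serre $G$-fibrations rather than Hurewicz ones. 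The hypothesis $\dim B\ge 2$ is used only as in the cited sources, to avoid degenerate low-dimensional cases.
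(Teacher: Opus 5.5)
Your proposal is correct and follows essentially the route behind this result. The paper only recalls the statement from \cite{G-symmetrized,arora2024equivariant}, but it proves the relative analogue (Theorem~\ref{prop: secat ub dim-conn}) the same way: the fibrewise-join characterization (Proposition~\ref{prop: Ganea char}), the identification $(\ast^n F)^H=\ast^n(F^H)$ with its $\bigl((n+1)(m+1)-2\bigr)$-connectivity, and equivariant obstruction theory. Your direct cell-by-cell extension over the cells $G/H\times D^k$ replaces the Bredon obstruction groups, and your pullback to a minimal-dimensional $B'$ makes explicit a step the paper glosses. One small correction: the direction you actually use (a section of the join gives a cover) needs no normality of $B$, since $G$-paracompactness is only required for the partition-of-unity converse; likewise your argument never uses $\dim(B)\geq 2$.
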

Finally, we state the classical cohomological lower bound for the equivariant sectional category, 
by Arora and Daundkar in \cite{arora2024equivariant}. 
We begin by fixing notation for Borel cohomology. 
Let $EG \to BG$ be the universal principal $G$-bundle. 
For a $G$-space $X$, its homotopy orbit space is
\[
X^h_G := EG \times_G X,
\]
and its Borel cohomology with coefficients in a commutative ring $R$ is
\[
H^\ast_G(X;R) := H^\ast(X^h_G;R).
\]
If $f \colon X \to Y$ is a $G$-map, we denote by $f^h_G \colon X^h_G \to Y^h_G$ the induced map between the corresponding homotopy orbit spaces.
\begin{proposition}\label{Gcohom}
Let $p \colon E \to B$ be a $G$-map. 
Suppose there exist cohomology classes $u_1,\dots,u_k \in H^\ast_G(B;R)$ such that
\[
(p^h_G)^\ast(u_1) = \cdots = (p^h_G)^\ast(u_k) = 0,
\quad\text{and}\quad
u_1 \smile \cdots \smile u_k \neq 0.
\]
Then $\sct_G(p) \geq k$.
\end{proposition}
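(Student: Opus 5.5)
The plan is to pull the cohomology classes back to each invariant open set of a sectional cover, show their restrictions vanish there, and then use the standard relative-cup-product argument, carried out on homotopy orbit spaces. Suppose, for contradiction, that $\sct_G(p)\le k-1$. Then there is a cover $\{U_1,\dots,U_k\}$ of $B$ by $G$-invariant open sets, each with a $G$-map $s_i\colon U_i\to E$ such that $p\circ s_i\simeq_G \iota_{U_i}$.

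\textbf{Step 1: restrictions vanish.} Apply the Borel construction $EG\times_G(-)$ to every map and every $G$-homotopy. A $G$-homotopy $U_i\times I\to B$ induces an ordinary homotopy $(U_i)^h_G\times I\to B^h_G$, since $I$ carries the trivial action. Hence $(\iota_{U_i})^h_G\simeq p^h_G\circ (s_i)^h_G$. It follows that
\[
(\iota_{U_i})^{h\,\ast}_G(u_i)=((s_i)^h_G)^\ast\,(p^h_G)^\ast(u_i)=0
\quad\text{in } H^\ast_G(U_i;R).
\]

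\textbf{Step 2: lift to relative classes.} The space $(U_i)^h_G=EG\times_G U_i$ is an open subset of $B^h_G$. This holds because $EG\times U_i$ is open and $G$-invariant in $EG\times B$, and the quotient map by a group action is open. By the long exact sequence of the pair $(B^h_G,(U_i)^h_G)$, each $u_i$ lifts to a class $\bar u_i\in H^\ast(B^h_G,(U_i)^h_G;R)$.

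\textbf{Step 3: cup product.} The relative cup product
\[
\bar u_1\smile\cdots\smile\bar u_k\in H^\ast\Bigl(B^h_G,\textstyle\bigcup_i (U_i)^h_G;R\Bigr)
\]
maps to $u_1\smile\cdots\smile u_k$. Since the $U_i$ cover $B$, the sets $(U_i)^h_G$ cover $B^h_G$, so the relative group is $H^\ast(B^h_G,B^h_G;R)=0$. This forces $u_1\smile\cdots\smile u_k=0$, contradicting the hypothesis, and therefore $\sct_G(p)\ge k$.

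\textbf{Main obstacle.} The delicate point is the relative cup product in Step 3. For singular cohomology, the product $H^\ast(X,A)\otimes H^\ast(X,B)\to H^\ast(X,A\cup B)$ requires $\{A,B\}$ to be an excisive couple. Open subsets of $X$ satisfy this, so the openness of $(U_i)^h_G$ in $B^h_G$ established in Step 2 is exactly what is needed. I would state that openness check explicitly and then cite the standard relative cup product for open covers.
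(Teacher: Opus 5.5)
Your proof is correct and follows the same route the paper relies on: the statement is quoted from Arora--Daundkar, and the paper runs the identical argument in its proof of Theorem~\ref{prop: secat lb by coh}, namely that the restrictions to the $U_i$ vanish, each class lifts to a relative class, and the relative cup product lands in a zero group such as $H^\ast(B^h_G,B^h_G;R)=0$. Your explicit checks that $EG\times_G U_i$ is open in $EG\times_G B$ (which legitimizes the relative cup product) and that a $G$-homotopy induces an ordinary homotopy on Borel constructions are details the paper leaves implicit.
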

Together with the dimension–connectivity upper bound, this cohomological lower bound will serve as a key tool in the sections that follow.
\subsection{Relative sectional category}
We recall the following definition of the relative sectional category and the invariants derived from it.
\begin{definition}[{\cite{Hopf_inv_Gon_Luc_Grant, Calcines_relTC}}]
Let $K\xra{f} B\xla p E $ be continuous maps. The \emph{sectional category} of $p$ with respect to $f$, denoted by $\sct_f(p)$, is the least integer $m \leq \infty$ such that $K$ admits an open cover $\{U_i\}_{i=0}^m$ with the property that, for each $i$, there exists a map $s_i \colon U_i \to E$ satisfying
\[
p \circ s_i \simeq f|_{U_i}.
\]
\end{definition}
The notion of topological complexity of maps was introduced by Scott in \cite{Sc}. Here we define it in the sense of Garc\'ia-Calcines as a relative sectional category of a particular map.
\begin{definition}[{\cite{Sc}}]
    Let $f\colon X\to Y$ be a continuous map. Topological complexity of the map $f$ is defined by $\TC^{Sc}(f)=\sct_{f\times f}(\pi_Y).$
\end{definition}
There is another approach given by Murillo and Wu in \cite{M-W} to the topological complexity of maps. Again  observed by Garc\'ia-Calcines \cite{Calcines_relTC} that this notion can be defined as follows:
\begin{definition}  Let $f\colon X\to Y$ be a continuous map. Topological complexity of the map $f$ is defined by $\TC^{MW}(f)=\sct_{f\times f}((f\times f)\circ\pi_X).$
\end{definition}
Note that these two notions of topological complexity maps are equivalent, as shown by Scott in \cite[Theorem 3.4]{Sc}.

\section{Equivariant relative sectional category}\label{sec: eq rel secat}
In this section, we introduce and study the equivariant analogue of relative sectional category. 
\begin{definition}
    Let $p\colon E\to B$  and $f\colon K\to B$ be $G$-maps. The \emph{equivariant sectional category of $p$ relative to $f$}, denoted by $\sct_{G,f}(p)$, is defined as the smallest non-negative integer $n$, such that there is a $G$-invariant open cover $\{U_0,\dots,U_n\}$ of $K$ with $G$-maps $s_i\colon U_i\to E$ satisfying $p\circ s_i\simeq_G f|_{U_i}$. In other words,  the following diagram commutes up to $G$-homotopy:
\[\begin{tikzcd}
	& E \\
	{U_i} & B.
	\arrow["p", from=1-2, to=2-2]
	\arrow["{s_i}", dashed, from=2-1, to=1-2]
	\arrow["{f|_{U_i}}"', from=2-1, to=2-2]
\end{tikzcd}\]
\end{definition}
We refer $\sct_{G,f}(p)$ as the \emph{equivariant relative sectional category of $p$ with respect to $f$}.A $G$-map $s_i\colon U_i\to E$ satisfying $p\circ s_i\simeq_G f|_{U_i}$ is called a \emph{$G$-equivariant local section  of $p$ relative to $f$} and the corresponding $G$-invariant open cover $K$ is called \emph{$G$-invariant open cover of $K$ relative to $f$}.

\begin{remark}  We note the following.
    \begin{enumerate}
        \item If $G$ acts trivially on $K$, then $\sct_{G,f}(p)=\sct_{f}(p)$, which is a relative sectional category in the sense of  Garc\'ia-Calcines \cite{Calcines_relTC}.
        
        \item If $f=\mathrm{id}_B$, then $\sct_{G,f}(p)=\sct_G(p)$.

        \item We always have $\sct_{f}(p)\leq\sct_{G,f}(p)$.
    \end{enumerate}
\end{remark}

\begin{proposition}\label{prop: rel secat=secat hpb}
 Let $p\colon E\to B$  and $f\colon K\to B$ be $G$-maps. Suppose we have the following $G$-homotopy pullback diagram:
 \[ \begin{tikzcd}
P_{f,p} \arrow{r}{\tilde f} \arrow[swap]{d}{{\tilde p}} & E \arrow{d}{p} \\%
K \arrow{r}{f}&B. 
\end{tikzcd}
\]
Then $\sct_{G,f}(p)=\sct_G(\tilde{p})$.
\end{proposition}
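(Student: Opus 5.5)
We prove the two inequalities $\sct_G(\tilde p)\le \sct_{G,f}(p)$ and $\sct_{G,f}(p)\le \sct_G(\tilde p)$ separately. Both are obtained by transporting local sections along a single fixed $G$-invariant open cover of $K$. The only tool needed is the weak universal property of the $G$-homotopy pullback in the definition above, together with the homotopy $p\circ\tilde f\simeq_G f\circ\tilde p$ that is part of the data of the square. No fibration hypothesis is needed, and Proposition~\ref{prop: hpb=pb} is not used.

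\emph{Step 1: $\sct_G(\tilde p)\le\sct_{G,f}(p)$.} Let $\{U_0,\dots,U_n\}$ be a $G$-invariant open cover of $K$ with $G$-maps $s_i\colon U_i\to E$ such that $p\circ s_i\simeq_G f|_{U_i}=f\circ\iota_{U_i}$. Apply the universal property with $Q=U_i$, $\alpha=\iota_{U_i}\colon U_i\to K$ and $\beta=s_i\colon U_i\to E$. This is legitimate because $p\circ\beta\simeq_G f\circ\alpha$. It yields a $G$-map $\lambda_i\colon U_i\to P_{f,p}$ with $\tilde p\circ\lambda_i\simeq_G\iota_{U_i}$. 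Thus $\lambda_i$ is a local $G$-homotopy section of $\tilde p$ over $U_i$ in the sense of Definition~\ref{def:eqsecat}, and the same cover shows $\sct_G(\tilde p)\le n$.

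\emph{Step 2: $\sct_{G,f}(p)\le\sct_G(\tilde p)$.} Let $\{U_0,\dots,U_n\}$ be a $G$-invariant open cover of $K$ with $G$-maps $\sigma_i\colon U_i\to P_{f,p}$ such that $\tilde p\circ\sigma_i\simeq_G\iota_{U_i}$. Set $s_i:=\tilde f\circ\sigma_i\colon U_i\to E$. Then
\[
p\circ s_i=p\circ\tilde f\circ\sigma_i\simeq_G f\circ\tilde p\circ\sigma_i\simeq_G f\circ\iota_{U_i}=f|_{U_i}.
\]
The first homotopy is the square's homotopy precomposed with $\sigma_i$. The second is $f$ composed with the given $G$-homotopy. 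Hence the $s_i$ are $G$-equivariant local sections of $p$ relative to $f$, and $\sct_{G,f}(p)\le n$.

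\emph{Technical points.} Precomposing a $G$-homotopy $H\colon X\times I\to Y$ with a $G$-map $\sigma\times\mathrm{id}_I$, or postcomposing it with a $G$-map, again gives a $G$-homotopy. This holds because $I$ carries the trivial action, and it is the only fact used in the homotopy chain of Step~2. The infinite cases follow from the two inequalities, since each step turns a finite cover for one invariant into a finite cover for the other. The main obstacle is ensuring the universal property is applied exactly in the form stated. The definition requires a homotopy $p\circ\beta\simeq_G f\circ\alpha$, which is symmetric, so the direction of the given homotopy does not matter. The definition also provides $\lambda$ for every $G$-space $Q$, in particular for the open subspace $U_i$, so nothing beyond the definition is needed.
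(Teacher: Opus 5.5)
Your proof is correct and follows the same route as the paper: Step~1 applies the weak universal property of the $G$-homotopy pullback with $Q=U_i$, $\alpha=\iota_{U_i}$, $\beta=s_i$, and Step~2 takes $s_i=\tilde f\circ\sigma_i$. You also write out the homotopy chain $p\circ\tilde f\circ\sigma_i\simeq_G f\circ\tilde p\circ\sigma_i\simeq_G f|_{U_i}$, which the paper leaves implicit.
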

\begin{proof}
    Let $\iota_U\colon U \hookrightarrow{K}$ be a $G$-invariant open set admitting a $G$-map $s\colon U \ra E$ such that $p \circ s \simeq_G f|_{U}$. By the property of the homotopy pullback, we have a $G$-map $\lambda\colon U 
\ra P_{f,p}$ such that $\tilde p\lambda \simeq_G \iota_U.$ This concludes $\sct_G(\tilde{p}) \leq \sct_{G,f}(p)$.

For the reverse inequality, we observe that if such a $G$-equivariant local section $\lambda$ of $p$ exists over $U$ (meaning $\tilde{p} \circ \lambda \simeq_G \iota_U$), we can construct the required map for $p$ by choosing $s = \tilde{f} \circ \lambda$. Thus, $\sct_{G,f}(p) \leq \sct_G(\tilde{p})$. The desired equality follows from using the above arguments for a $G$-invariant open cover.
\end{proof}

The following corollary follows immediately from Proposition~\ref{prop: hpb=pb} and Proposition~\ref{prop: rel secat=secat hpb}.

\begin{corollary}\label{cor: secat of fibration}
    If $p$ is a $G$-fibration, $\gsc{G,f}{p}=\sct_G(f^\ast p).$
\end{corollary}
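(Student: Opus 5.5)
The plan is to deduce the corollary by combining Proposition~\ref{prop: hpb=pb} with Proposition~\ref{prop: rel secat=secat hpb}. Write $f^\ast p\colon K\times_B E\to K$ for the pullback of $p$ along $f$, where
\[
K\times_B E=\{(k,e)\in K\times E \mid f(k)=p(e)\},
\]
with the diagonal $G$-action, $f^\ast p(k,e)=k$, and $\tilde f(k,e)=e$. First I will check that this is a genuine pullback square in the category of $G$-spaces. The subspace $K\times_B E$ is $G$-invariant because $f$ and $p$ are $G$-maps. The two projections are $G$-maps, and the square commutes strictly, so it commutes up to $G$-homotopy.

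Next I will show that this square is a $G$-homotopy pullback. Since $p$ is a $G$-fibration, Proposition~\ref{prop: hpb=pb} gives this directly.

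It then remains to apply Proposition~\ref{prop: rel secat=secat hpb} with $P_{f,p}=K\times_B E$ and $\tilde p=f^\ast p$. This yields $\sct_{G,f}(p)=\sct_G(f^\ast p)$.

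The only step with real content is Proposition~\ref{prop: hpb=pb}, which we are allowed to assume. If one wanted to check it directly, I would compare $K\times_B E$ with the standard $G$-homotopy pullback. One uses the equivariant lifting property of $p$ to transport a triple $(k,\gamma,e)$ to a strict pair $(k,e')$, lifting $\gamma$ backwards from $e$ equivariantly. This gives a $G$-homotopy equivalence over $K$. Such an equivalence preserves $\sct_G$, since local $G$-homotopy sections transfer along it. For the corollary itself, however, citing the two propositions suffices, so no step presents a genuine obstacle.
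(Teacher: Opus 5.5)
Your proposal is correct and follows the paper's own argument: it deduces the corollary directly from Proposition~\ref{prop: hpb=pb}, which makes the strict pullback along the $G$-fibration $p$ a $G$-homotopy pullback, combined with Proposition~\ref{prop: rel secat=secat hpb}. Your additional checks (that $K\times_B E$ is a $G$-space, and the sketch of how to verify Proposition~\ref{prop: hpb=pb} directly) are sound but not needed.
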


\subsection{Properties}
We study various bounds for the equivariant relative sectional category.
\begin{proposition}\label{prop: rel secat leq secat}
 Let $p\colon E\to B$  and $f\colon K\to B$ be $G$-maps. Then  $ \gsc{G,f}{p}\leq \gsc{G}{p}.$
\end{proposition}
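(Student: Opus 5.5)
The plan is to pull back a cover of $B$ along $f$. Suppose $\gsc{G}{p}=n<\infty$; otherwise there is nothing to prove. Then there is a $G$-invariant open cover $\{V_0,\dots,V_n\}$ of $B$ and $G$-maps $\sigma_i\colon V_i\to E$ with $p\circ\sigma_i\simeq_G \iota_{V_i}$, where $\iota_{V_i}$ is the inclusion. Set $U_i:=f^{-1}(V_i)\subseteq K$.

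First, each $U_i$ is open because $f$ is continuous. It is also $G$-invariant: if $k\in U_i$ and $g\in G$, then $f(gk)=gf(k)\in V_i$ because $V_i$ is $G$-invariant. The $U_i$ cover $K$ because the $V_i$ cover $B$.

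Second, on each $U_i$ define $s_i:=\sigma_i\circ f|_{U_i}\colon U_i\to E$. This is a $G$-map, being a composite of $G$-maps. Let $H\colon V_i\times I\to B$ be a $G$-homotopy from $p\circ\sigma_i$ to $\iota_{V_i}$. Then
\[
H\circ(f|_{U_i}\times \mathrm{id}_I)\colon U_i\times I\to B
\]
is a $G$-homotopy, since $I$ carries the trivial action, and it runs from $p\circ s_i$ to $\iota_{V_i}\circ f|_{U_i}=f|_{U_i}$. Hence $p\circ s_i\simeq_G f|_{U_i}$. So $\{U_i\}$ is a $G$-invariant open cover of $K$ relative to $f$ with $n+1$ members, and $\gsc{G,f}{p}\le n$.

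There is no real obstacle here. The only points to check are that preimages of invariant sets are invariant and that precomposing a $G$-homotopy with a $G$-map gives a $G$-homotopy; both are immediate.

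Alternatively, one could use Proposition~\ref{prop: rel secat=secat hpb} together with the fact that the sectional category of a homotopy pullback is at most that of the original map. The direct argument above avoids that extra step.
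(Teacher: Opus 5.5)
Your proposal is correct and is essentially the paper's own proof. Both pull back a $G$-invariant open cover of $B$ along $f$ to get the sets $f^{-1}(V_i)$, and both use $\sigma_i\circ f|_{U_i}$ as the relative local sections, with the required $G$-homotopy obtained by precomposing with $f$.
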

\begin{proof}
Let $U\subseteq B$ be a $G$-invariant open set over which $p$ admits a local $G$-section; that is, there exists a $G$-map $s\colon U \to E$ such that $ps \simeq_G \iota_U$, where $\iota_U\colon U \hookrightarrow B$ is the inclusion. 

It is clear that $f^{-1}(U) \subseteq K$ is a $G$-invariant open set. Then, we define a $G$-map $$s \circ f\vert{}_{f^{-1}(U)}\colon f^{-1}(U) \to E,$$ which clearly satisfies
$$p \circ \left(s \circ f\vert{}_{f^{-1}(U)}\right) \simeq_G \iota_U \circ f\vert{}_{f^{-1}(U)} = f\vert{}_{f^{-1}(U)}.$$
Then, applying above argument to a $G$-invariant open cover, we get $ \gsc{G,f}{p}\leq \gsc{G}{p}.$
\end{proof}

\begin{proposition}\label{prop: restricted relative secat}
Let $p\colon E\to B$  and $f\colon K\to B$ be $G$-maps. Suppose $H$, $H'$ are closed subgroups of $G$ such that $K^H , E^H, B^H$ are $H'$-invariant. Then 
$$\sct_{H',f^H}(p^H) \leq \sct_{G,f}(p).$$
\end{proposition}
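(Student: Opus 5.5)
The plan is to restrict a $G$-invariant cover of $K$ relative to $f$, together with its local sections and homotopies, to $H$-fixed points. Assume $\sct_{G,f}(p)=n<\infty$; otherwise there is nothing to prove. Choose a $G$-invariant open cover $\{U_0,\dots,U_n\}$ of $K$, $G$-maps $s_i\colon U_i\to E$, and $G$-homotopies $F_i\colon U_i\times I\to B$ from $p\circ s_i$ to $f|_{U_i}$. Set $V_i:=U_i\cap K^H = U_i^H$. Each $V_i$ is open in $K^H$ in the subspace topology, and the $V_i$ cover $K^H$.

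The first point is $H'$-invariance of the $V_i$. Since $U_i$ is $G$-invariant and $H'\le G$, we have $h'U_i\subseteq U_i$. The hypothesis that $K^H$ is $H'$-invariant gives $h'K^H\subseteq K^H$. Hence $h'V_i\subseteq V_i$.

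Next we restrict the maps. A $G$-map sends $H$-fixed points to $H$-fixed points, so $s_i$ restricts to $s_i^H\colon V_i\to E^H$. Because $I$ carries the trivial action, $(U_i\times I)^H=U_i^H\times I$, and so $F_i$ restricts to $F_i^H\colon V_i\times I\to B^H$. This restriction is a homotopy from $p^H\circ s_i^H$ to $f^H|_{V_i}$, since $p^H$ and $f^H$ are just the restrictions of $p$ and $f$.

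Equivariance for $H'$ follows directly. The group $H'$ acts on $K^H$, $E^H$ and $B^H$ by restricting the $G$-action, which is legitimate precisely because these sets are $H'$-invariant. Restrictions of $G$-equivariant maps commute with this restricted action. Thus $s_i^H$ and $F_i^H$ are $H'$-maps, and $p^H, f^H$ are $H'$-maps.

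Therefore $\{V_0,\dots,V_n\}$ is an $H'$-invariant open cover of $K^H$ relative to $f^H$, with $H'$-equivariant local sections of $p^H$. This gives $\sct_{H',f^H}(p^H)\le n$.

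There is no serious obstacle; the argument is pure bookkeeping. The step that needs care is the bookkeeping for $H'$-invariance: the hypothesis is exactly what makes $K^H, E^H, B^H$ into $H'$-spaces, so that the relative sectional category on the left-hand side is defined. The other point to check is that fixed points of $U_i\times I$ are $U_i^H\times I$, which is where the triviality of the action on $I$ enters.
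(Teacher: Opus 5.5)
Your proposal is correct and takes essentially the same approach as the paper. You restrict each $G$-invariant open set to $U_i\cap K^H$, use the $H'$-invariance of $K^H$ to get an $H'$-invariant open cover of $K^H$, and restrict the $G$-equivariant sections and homotopies to $H$-fixed points. You are slightly more explicit than the paper in checking that the homotopy restricts to $V_i\times I\to B^H$, which works because $I$ carries the trivial action.
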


\begin{proof}
    Let $U \subseteq K$ be an open subset over which $p$ admits a $G$-equivariant local section $\sigma\colon U \to E$ relative to $f$; that is, $p \circ \sigma \simeq_G f|_U$.

    Let $V = U \cap K^H$. This is an open subset of $K^H$, and since $U$ is $G$-invariant (and thus $H'$-invariant) and $K^H$ is $H'$-invariant, $V$ is also $H'$-invariant. Because the $G$-equivariant map $\sigma$ sends $H$-fixed points to $H$-fixed points, it restricts to a well-defined $H'$-map $\sigma|_V\colon V \to E^H$. Consequently, $p^H \circ \sigma|_V \simeq_{H'} f^H|_V$. Thus, $\sigma|_V$ provides an $H'$-equivariant local section over $V$ for $p^H$ relative to $f^H$. 
    
    Since any $G$-invariant open cover of $K$ relative to $f$ restricts to an $H'$-invariant open cover of $K^H$ relative to $f^H$, the inequality follows.
\end{proof}
\begin{corollary} 
    In particular, we have the following inequalities:
    \begin{enumerate}
        \item $\sct_{f^H}(p^H) \leq \sct_{G,f}(p)$ for every closed subgroup $H$ of $G$.
        \item $\sct_{H',f}(p) \leq \sct_{G,f}(p)$ for every closed subgroup $H'$ of $G$.
    \end{enumerate}
\end{corollary}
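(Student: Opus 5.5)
Both inequalities are special cases of Proposition~\ref{prop: restricted relative secat}: for each one I choose the pair of closed subgroups $(H,H')$ appropriately and check the invariance hypothesis, namely that $K^H$, $E^H$, $B^H$ are $H'$-invariant.

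For part (1), I take $H' = \{e\}$, the trivial subgroup, and keep $H$ an arbitrary closed subgroup. Every subset is invariant under the trivial group, so the hypothesis holds automatically. Then Proposition~\ref{prop: restricted relative secat} gives $\sct_{\{e\},f^H}(p^H)\le \sct_{G,f}(p)$. An $\{e\}$-homotopy is just an ordinary homotopy, and an $\{e\}$-invariant open cover is just an open cover. Hence $\sct_{\{e\},f^H}(p^H)=\sct_{f^H}(p^H)$, which is the relative sectional category in the sense of Garc\'ia-Calcines; this is the non-equivariant identification already noted in the Remark after the definition. This proves (1).

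For part (2), I take $H=\{e\}$ and let $H'$ be an arbitrary closed subgroup. Then $K^{\{e\}}=K$, $E^{\{e\}}=E$, $B^{\{e\}}=B$, and $f^{\{e\}}=f$, $p^{\{e\}}=p$. Each of these spaces is $G$-invariant, hence $H'$-invariant. The proposition then yields $\sct_{H',f}(p)\le\sct_{G,f}(p)$ directly.

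There is no real obstacle here. The only point to verify is that the relative sectional category for the trivial group coincides with the classical relative sectional category, and this is immediate from the definitions.
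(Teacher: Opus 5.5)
Your proposal is correct and matches how the paper obtains this corollary. The paper states it as an immediate specialization of Proposition~\ref{prop: restricted relative secat}, taking $H'=\{e\}$ for (1) and $H=\{e\}$ for (2), exactly as you do. Your identification $\sct_{\{e\},f^H}(p^H)=\sct_{f^H}(p^H)$ follows directly from the definitions, so you don't need to cite the Remark, which concerns the different situation of a trivial action on $K$ alone.
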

The equivariant LS-category of equivariant maps was introduced in \cite{D-GC}. This notion generalizes the classical notion of equivariant LS-category of $G$-spaces.

\begin{definition}[{\cite[Definition 4.2]{D-GC}}]
    Let $f\colon X\to Y$ be a $G$-map. The \emph{equivariant LS-category} of $f$, denoted by $\cat_G(f)$, is defined as the smallest integer $n$, such that there exists a $G$-invariant open cover $\{U_0,\dots, U_n\}$ of $X$ such that for each $0\leq i\leq n$, the restriction $f|_{U_i}$ is $G$-homotopic to a $G$-map which takes values in an orbit of a point.
\end{definition}

We note that the equivariant LS-category of a map satisfies $$ \cat_G(f) \leq \min\{\cat_G(X), \cat_G(Y)\}.$$
Observe that, if $f=\mathrm{id}_X$, we have  $\cat_G(f)=\cat_G(X)$.

 The equivariant LS-category provides a lower bound for the equivariant relative sectional category under specific conditions, as shown below.
 \begin{proposition}\label{prop: secat lb by cat}
 Suppose $p\colon E\to B$  and $f\colon K\to B$ are $G$-maps such that $E$ is $G$-contractible. Then \[\cat_G(f)\leq \sct_{G,f}(p).\]
\end{proposition}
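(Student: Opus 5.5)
The plan is to take an optimal $G$-invariant open cover of $K$ relative to $f$ and show that the same cover already witnesses $\cat_G(f)$; this gives the inequality directly. If $\sct_{G,f}(p)=\infty$ there is nothing to prove, so assume $\sct_{G,f}(p)=n$. Fix a $G$-invariant open cover $\{U_0,\dots,U_n\}$ of $K$ together with $G$-maps $s_i\colon U_i\to E$ satisfying $p\circ s_i\simeq_G f|_{U_i}$.

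The key input is the $G$-contractibility of $E$. By definition, $\mathrm{id}_E$ is $G$-homotopic to a $G$-map $c\colon E\to E$ whose image lies in a single orbit $\mathcal O_{e_0}=Ge_0$; I take this as the meaning of $G$-contractible, consistent with the notion of $G$-categorical set used above. Let $H\colon E\times I\to E$ be such a $G$-homotopy from $\mathrm{id}_E$ to $c$.

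For each $i$, consider the composite $G$-homotopy $p\circ H\circ(s_i\times \mathrm{id}_I)\colon U_i\times I\to B$. It is equivariant because $H$, $s_i$ and $p$ are $G$-maps and $G$ acts trivially on $I$. It runs from $p\circ s_i$ to $p\circ c\circ s_i$. Concatenating it with the given $G$-homotopy $f|_{U_i}\simeq_G p\circ s_i$ yields
\[
f|_{U_i}\simeq_G p\circ c\circ s_i,
\]
since $G$-homotopy is an equivalence relation. The image of $p\circ c\circ s_i$ lies in $p(Ge_0)=G\,p(e_0)$, which is the orbit of the point $p(e_0)\in B$, by equivariance of $p$. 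Hence each $f|_{U_i}$ is $G$-homotopic to a $G$-map taking values in a single orbit, so $\{U_0,\dots,U_n\}$ witnesses $\cat_G(f)\le n$.

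I do not expect any serious obstacle. The only point needing care is the precise definition of $G$-contractibility. If it is taken to mean $G$-homotopy equivalence to a single orbit $G/H$, or deformation into a fixed point, then the constant or orbit-valued map still has image in one orbit. The argument then goes through unchanged, with $c$ replaced by the composite $E\to G/H\to E$.
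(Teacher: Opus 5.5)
Your proposal is correct and follows the paper's argument. In both, a $G$-homotopy $\mathrm{id}_E\simeq_G c$ is composed with $s_i$ and $p$ to obtain $f|_{U_i}\simeq_G p\circ c\circ s_i$, and by equivariance of $p$ this map takes values in the single orbit $G\,p(e_0)$.
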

\begin{proof}
Suppose $U$ is a $G$-invariant open subset of $K$ over which there exist a $G$-map $s\colon U\to E$ such that $p\circ s\simeq_G f|_{U}$. Since $E$ is $G$-contractible, there  exists a $G$-homotopy $F\colon E\times I\to E$ such that $F(-,0)=\mathrm{id}_E$ and $F(-,1)=c$, where $c\colon E\to E$ is a $G$-map which takes values into a single $G$-orbit.
By pre-composing this homotopy with $s$ and post-composing it with $p$, we obtain a $G$-homotopy showing that the map $p \circ s\colon U \to B$ is $G$-homotopic to $p \circ c \circ s$.
Thus, we have $f\vert{}_U \simeq_G p \circ c \circ s$. 

Because $p$, $c$, and $s$ are all $G$-maps, and $c$ takes values within a single orbit in $E$, the composition $p \circ c\circ s$ necessarily takes values within a single $G$-orbit in $B$. Then applying the same argument of $G$-invariant open cover of $K$, we obtain the desired result.
\end{proof}

 We now state the lemma \cite[Lemma 3.13]{Colman_Grant_Eqtc} about the conservation of isotropy.
 
\begin{lemma}\label{lem: conservation of isotropy}
    Let $X$ be a $G$-connected $G$-space with $G_x \subseteq G_y$ for some $x,y \in X$, then there exists a $G$-homotopy $F\colon \mathcal O_x \times I \ra X$ such that $F_0=\iota_{\mathcal O_x}$ and $F_1(\mathcal O_x) \subseteq \mathcal O_y.$
\end{lemma}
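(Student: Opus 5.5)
The plan is to build the homotopy by hand from a single path in a fixed-point set, using the hypothesis $G_x \subseteq G_y$ together with $G$-connectedness.

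\textbf{Step 1: choose a path in the right fixed-point set.} Set $H = G_x$, which is a closed subgroup of $G$. Then $x \in X^H$ by definition. Also $y \in X^H$, since every $h\in H$ lies in $G_y$ and so fixes $y$. Because $X$ is $G$-connected, $X^H$ is path-connected. So there is a path $\gamma\colon I \to X^H$ with $\gamma(0)=x$ and $\gamma(1)=y$. The key feature is that $h\gamma(t)=\gamma(t)$ for all $h\in G_x$ and all $t\in I$.

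\textbf{Step 2: define $F$ and check it is well defined.} Define
\[F(gx,t) = g\,\gamma(t), \qquad g\in G,\ t\in I.\]
Suppose $gx = g'x$. Then $g^{-1}g' \in G_x$, so $g^{-1}g'$ fixes $\gamma(t)$, which gives $g'\gamma(t) = g\gamma(t)$. Hence $F$ is well defined on $\mathcal O_x\times I$.

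The required properties are then immediate:
\begin{enumerate}
\item $F$ is $G$-equivariant, since $F(k\cdot gx,t)=kg\gamma(t)=k\,F(gx,t)$.
\item $F_0(gx)=gx$, so $F_0=\iota_{\mathcal O_x}$.
\item $F_1(gx)=gy\in\mathcal O_y$, so $F_1(\mathcal O_x)\subseteq \mathcal O_y$.
\end{enumerate}

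\textbf{Step 3: continuity (the main obstacle).} I expect continuity to be the only delicate point. The map
\[\Phi\colon G\times I \to X,\qquad \Phi(g,t)=g\gamma(t),\]
is continuous because the action is continuous. By Step 2, $\Phi$ factors through $G/G_x\times I$.

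The projection $G\to G/G_x$ is a quotient map. Since $I$ is locally compact Hausdorff, the product $G\times I\to G/G_x\times I$ is again a quotient map. Hence the induced map $G/G_x\times I\to X$ is continuous.

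Finally, $G$ is compact and $X$ is Hausdorff, so the orbit map $G/G_x\to\mathcal O_x$ is a continuous bijection from a compact space onto a Hausdorff space. It is therefore a homeomorphism; this is the orbit–stabilizer homeomorphism recalled in the preliminaries. Transporting through this identification shows that $F\colon \mathcal O_x\times I\to X$ is continuous, which completes the argument.
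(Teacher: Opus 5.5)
Your proof is correct. The paper gives no proof of its own and simply cites \cite[Lemma 3.13]{Colman_Grant_Eqtc}, and your argument is the standard one behind that citation: take a path $\gamma$ from $x$ to $y$ in $X^{G_x}$, set $F(gx,t)=g\gamma(t)$, and obtain continuity from $G/G_x\cong\mathcal O_x$. Your treatment of well-definedness and of continuity (via the quotient $G\times I\to G/G_x\times I$ and the compact-to-Hausdorff bijection) supplies exactly the details that argument needs.
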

In particular, if $y \in X$ is a fixed point (so $G_y = G$), then for every $x \in X$, there exists a homotopy that deforms the orbit $\mathcal{O}_x$ into the point $y$.

We also have an upper bound given by the equivariant LS-category under specific hypotheses.
\begin{proposition}\label{prop: secat ub by cat}
    Let $p\colon E\to B$  and $f\colon K\to B$ be $G$-maps.
    If one of the following assumptions holds: \begin{enumerate}
        \item $p(E^H)=B^H$ for all closed subgroup $H\leq G$,
        (or) 
        \item $B$ is $G$-connected with $E^G\neq \emptyset,$ then
        $$\gsc{G,f}{p}\leq \cat_G(f) \leq \min\{\cat_G(K), \cat_G(B)\}.$$
    \end{enumerate} 
    
\end{proposition}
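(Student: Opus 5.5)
The second inequality, $\cat_G(f)\leq \min\{\cat_G(K),\cat_G(B)\}$, was already noted after the definition of $\cat_G(f)$, so it only remains to show $\gsc{G,f}{p}\leq \cat_G(f)$. Take a $G$-invariant open cover $\{U_0,\dots,U_n\}$ of $K$ realizing $\cat_G(f)=n$. For each $i$ there is a $G$-homotopy $f|_{U_i}\simeq_G c_i$, where $c_i\colon U_i\to B$ is a $G$-map with image in a single orbit $\mathcal O_{b_i}$. The plan is to build, for each $i$, a $G$-map $s_i\colon U_i\to E$ with $p\circ s_i\simeq_G c_i$. Composing with the homotopy gives $p\circ s_i\simeq_G f|_{U_i}$, so the same cover witnesses $\gsc{G,f}{p}\leq n$.

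\emph{Case (1).} Suppose $p(E^H)=B^H$ for every closed $H\leq G$. Take $H=G_{b_i}$, so $b_i\in B^H$, and choose $e_i\in E^{H}$ with $p(e_i)=b_i$. Then $G_{b_i}\subseteq G_{e_i}$, and the assignment $gb_i\mapsto ge_i$ defines a well-defined $G$-map $\phi_i\colon \mathcal O_{b_i}\to \mathcal O_{e_i}$; it is continuous via $G/G_{b_i}\cong\mathcal O_{b_i}$. By construction $p\circ\phi_i$ is the inclusion of $\mathcal O_{b_i}$. Set $s_i=\phi_i\circ c_i$. Then $p\circ s_i=c_i$ on the nose.

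\emph{Case (2).} Suppose $B$ is $G$-connected and $e_0\in E^G$. Put $y=p(e_0)\in B^G$, so $G_y=G\supseteq G_{b_i}$. By Lemma~\ref{lem: conservation of isotropy} (and the remark following it), there is a $G$-homotopy $F\colon\mathcal O_{b_i}\times I\to B$ from the inclusion to the constant map at $y$. Then $F\circ(c_i\times\mathrm{id}_I)$ is a $G$-homotopy from $c_i$ to the constant map at $y$. Take $s_i$ to be the constant map at $e_0$; it is a $G$-map because $e_0$ is fixed, and $p\circ s_i$ is the constant map at $y$. Hence $p\circ s_i\simeq_G c_i$.

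The main point to check is the well-definedness and continuity of $\phi_i$ in case (1). This relies on the isotropy inclusion $G_{b_i}\subseteq G_{e_i}$ and on the homeomorphism $G/G_{b_i}\cong \mathcal O_{b_i}$. That homeomorphism holds because $G$ is compact and $B$ is Hausdorff, which are standing assumptions in the paper. The remaining steps are routine compositions of $G$-homotopies.
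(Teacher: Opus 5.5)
Your proof is correct and follows the paper's argument essentially step for step: in case (1) you send $gb\mapsto ge$ for a lift $e\in E^{G_b}$ of $b$, and in case (2) you use the conservation-of-isotropy lemma to deform $c$ to the constant map at $p(e_0)$ and take the constant section at $e_0$. The only addition is that you justify continuity of $gb\mapsto ge$ via $G/G_b\cong\mathcal O_b$ (compact group, Hausdorff space), which the paper leaves implicit.
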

\begin{proof}
   Let $U \subseteq K$ be a $G$-invariant open set such that $f|_U$ is $G$-homotopic (via a homotopy $F$) to a $G$-map $c\colon U\ra B$ with $c(U )\subseteq \mathcal O_b $ for some $b\in B$. To prove the bound, it is sufficient to construct a map $s\colon U \to E$ such that $p \circ s \simeq_G f\vert{}_U$.

\textbf{Case (1):} Because $b \in B^{G_b}$, the surjectivity assumption $p(E^{G_b}) = B^{G_b}$ guarantees the existence of an element $e \in E^{G_b}$ such that $p(e) = b$. We define a $G$-map $s\colon U \to E$ by setting $s(x) = ge$ whenever $c(x) = gb$ for some $g \in G$. This map is well-defined because $e \in E^{G_b}$ implies that $G_b \subseteq G_e$, preventing ambiguity from the choice of $g$. 
Then, $p(s(x)) = p(ge) = g \cdot p(e) = gb = c(x)$. Since $c \simeq_G f\vert{}_U$, it follows that $p \circ s \simeq_G f\vert{}_U$.

\textbf{Case (2):} Let $e_0 \in E^G$; consequently, its image $p(e_0) = b_0$ belongs to $B^G$. Using Lemma~\ref{lem: conservation of isotropy}, there exists a $G$-homotopy $F'\colon \mathcal{O}_b \times I \to B$ such that $F'(-, 0) = i_{\mathcal{O}_b}$ and $F'(\mathcal{O}_b, 1) = \{b_0\}$.
Define $F''\colon U \times I \to B$ by $F''(x, t) = F'(c(x), t)$. Then $F''(-, 0) = c$ and $F''(-, 1) = b_0$. 
Concatenating the homotopy $F$ with $F''$ provides a $G$-homotopy from $f\vert{}_U$ to the constant map $c_{b_0}\colon U \to \{b_0\} \subseteq B$. If we define the $G$-map $s\colon U \to \{e_0\} \subseteq E$, we have $p \circ s = c_{b_0} \simeq_G f\vert{}_U$.
\end{proof}
\begin{corollary}\label{cor: contractibility secat=cat}
      Let $p\colon E\to B$  and $f\colon K\to B$ be $G$-maps, where $E$ is a $G$-contractible space. If $B$ is $G$-connected with  $E^G\neq \emptyset$, or if $p(E^H)=B^H$ for all closed subgroups $H\leq G$ then $\sct_{G,f}(p)=\cat_G(f).$  
\end{corollary}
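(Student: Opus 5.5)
This follows by sandwiching $\sct_{G,f}(p)$ between $\cat_G(f)$ from both sides, using Proposition~\ref{prop: secat lb by cat} and Proposition~\ref{prop: secat ub by cat}.

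For the lower bound, $E$ is assumed $G$-contractible, so Proposition~\ref{prop: secat lb by cat} applies directly to the pair $p, f$. It gives $\cat_G(f)\leq \sct_{G,f}(p)$ without any further hypothesis.

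For the upper bound, both alternatives in the statement match the hypotheses of Proposition~\ref{prop: secat ub by cat} exactly. The condition $p(E^H)=B^H$ for all closed $H$ is case (1), and $B$ being $G$-connected with $E^G\neq\emptyset$ is case (2). In either case that proposition yields $\sct_{G,f}(p)\leq \cat_G(f)$. Combining the two inequalities gives the equality $\sct_{G,f}(p)=\cat_G(f)$.

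I do not expect a genuine obstacle; the argument is essentially bookkeeping. The one point to confirm is that the contractibility used in Proposition~\ref{prop: secat lb by cat} means deformation onto a map with image in a single orbit. That is the notion stated in its proof, so the hypothesis transfers without change. It is also worth noting that in case (2) $G$-contractibility is not used for the upper bound, and in either case the surjectivity or connectivity hypothesis is not needed for the lower bound, so the two propositions combine cleanly.
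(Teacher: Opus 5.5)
Your proposal is correct and follows the same route as the paper. The paper states this corollary without proof, directly after Propositions~\ref{prop: secat lb by cat} and~\ref{prop: secat ub by cat}, and the intended argument is your sandwich: the lower bound $\cat_G(f)\leq\sct_{G,f}(p)$ comes from the $G$-contractibility of $E$, and the upper bound comes from case~(1) or case~(2) of the second proposition. The paper makes the same sandwich explicit for Corollary~\ref{cor: cat=secat of one point inclusion}.
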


\begin{corollary}\label{cor: cat=secat of one point inclusion}
    Let $Y$ be a $G$-connected space.
    Suppose  $f\colon X\to Y$ is a $G$-map and $Y^G\neq \emptyset$. Then for $z\in Y^G$ we have $$\sct_{G,f}(\{z\}\hookrightarrow{} Y)=\cat_G(f).$$
\end{corollary}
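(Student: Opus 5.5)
This follows from Corollary~\ref{cor: contractibility secat=cat} applied to the inclusion $p\colon \{z\}\hookrightarrow Y$, with $E=\{z\}$ and $B=Y$. So we only need to check that this $p$ satisfies the hypotheses of that corollary.

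First, $E=\{z\}$ must be $G$-contractible. Since $z\in Y^G$, the one-point space $\{z\}$ is a $G$-space with trivial action and the inclusion $p$ is a $G$-map. The identity of $\{z\}$ already takes values in the single orbit $\mathcal O_z=\{z\}$. Hence the constant homotopy shows that $\{z\}$ is $G$-contractible in the sense used in Proposition~\ref{prop: secat lb by cat}.

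Second, we check the alternative hypothesis of Corollary~\ref{cor: contractibility secat=cat}. By assumption $B=Y$ is $G$-connected. Moreover $E^G=\{z\}\neq\emptyset$, because $z$ is fixed by all of $G$. This is exactly condition (2) of Proposition~\ref{prop: secat ub by cat}.

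With both hypotheses verified, Corollary~\ref{cor: contractibility secat=cat} gives $\sct_{G,f}(\{z\}\hookrightarrow Y)=\cat_G(f)$ directly. The argument behind it combines two inequalities:
\begin{enumerate}
\item The lower bound $\cat_G(f)\le \sct_{G,f}(p)$ comes from Proposition~\ref{prop: secat lb by cat}. Any relative local section into $\{z\}$ makes $f|_U$ $G$-homotopic to the constant map at $z$, which lies in one orbit.
\item The upper bound comes from case (2) of Proposition~\ref{prop: secat ub by cat}. Lemma~\ref{lem: conservation of isotropy} deforms any orbit $\mathcal O_b$ onto the fixed point $z$, so every $G$-categorical set for $f$ carries the constant section at $z$.
\end{enumerate}

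No step is a genuine obstacle. The only point needing care is the upper bound, where the fixed point $b_0=p(e_0)$ in case (2) of Proposition~\ref{prop: secat ub by cat} must be the given $z$. This holds automatically here, since $e_0=z$.
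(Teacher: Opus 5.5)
Your proposal is correct and matches the paper's proof, which likewise combines the lower bound of Proposition~\ref{prop: secat lb by cat} (using that $\{z\}$ is $G$-contractible) with case (2) of Proposition~\ref{prop: secat ub by cat} (using that $Y$ is $G$-connected and $\{z\}^G=\{z\}\neq\emptyset$). These are exactly the two ingredients packaged in Corollary~\ref{cor: contractibility secat=cat}, so routing the argument through that corollary changes nothing.
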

\begin{proof}
    Let $p$ be the inclusion map $p\colon  \{z\} \hookrightarrow X$. Applying Proposition~\ref{prop: secat lb by cat} and Proposition ~\ref{prop: secat ub by cat}(2), we obtain the result.
\end{proof}

We will now establish the product inequality for the equivariant relative sectional category.

\begin{proposition}\label{prop: secat prod ineq}
Let $G$ be a compact Lie group and $ K_i$ be completely normal $G$-spaces for $i=1,2$. If $p_i\colon E_i\ra B_i$ and $f_i\colon K_i \ra B_i$ are $G$-maps for $i=1,2$, then 
$f_1 \times f_2$ and $p_1\times p_2$ can be regarded as $G$-maps, and  $$\sct_{G,f_1\times f_2}(p_1\times p_2) \leq \gsc{G,f_1}{p_1}+\gsc{G,f_2}{p_2}.$$
\end{proposition}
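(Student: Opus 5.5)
Since $G$ acts diagonally, $f_1\times f_2$ and $p_1\times p_2$ are $G$-maps, and a product of $G$-homotopies (using the diagonal copy of $I$) is again a $G$-homotopy. So if $U\subseteq K_1$ and $V\subseteq K_2$ are $G$-invariant open sets with relative local sections $s\colon U\to E_1$, $t\colon V\to E_2$, $p_1 s\simeq_G f_1|_U$, $p_2 t\simeq_G f_2|_V$, then $s\times t\colon U\times V\to E_1\times E_2$ is a $G$-equivariant local section of $p_1\times p_2$ relative to $f_1\times f_2$. The whole difficulty is therefore to rearrange the products of two covers into only $n+m+1$ sets. Here $n=\gsc{G,f_1}{p_1}$ and $m=\gsc{G,f_2}{p_2}$ are assumed finite.

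I would follow the classical argument of James / \v{S}varc, also used for Proposition~\ref{sub}. Take covers $\{U_i\}_{i=0}^n$ of $K_1$ and $\{V_j\}_{j=0}^m$ of $K_2$ by relative local section domains. Because $K_1$, $K_2$ are completely normal (in particular normal) and the sets are $G$-invariant, I choose $G$-invariant partitions of unity. Concretely, take an ordinary subordinate partition of unity and average it over $G$ with Haar measure, which is legitimate since $G$ is a compact Lie group; this yields $G$-invariant functions $\phi_i,\psi_j$ with $\mathrm{supp}\,\phi_i\subseteq U_i$. Equivalently, one can build $G$-invariant open covers $\{U_i^{(k)}\}$ with the refinement property of Ogle's lemma: for each $i$, $U_i=\bigcup_k U_i^{(k)}$ with the $U_i^{(k)}$, $k=0,1,\dots$, pairwise disjoint. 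The standard construction is via $\{x: \phi_i(x)>\phi_{i'}(x) \text{ for } i'\ne i \ldots\}$-type sets, i.e.\ via the nerve/barycentric coordinates. Since the $\phi_i$ are $G$-invariant, all sets so produced are $G$-invariant and open.

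Then for $0\le k\le n+m$ I set
\[
W_k=\bigcup_{i+j=k} A_i\times B_j ,
\]
where $\{A_i\}$ and $\{B_j\}$ are $G$-invariant open shrinkings chosen so that the pieces $A_i\times B_j$ with $i+j=k$ are pairwise disjoint and open in $K_1\times K_2$. Here I would use the standard sets
\[
A_i=\{x:\ \phi_i(x)>\phi_{i'}(x)\ \forall i'>i\ldots\}\cap(\cdots).
\]
Alternatively, I would use the cleaner route of building $(n+1)$-fold decompositions first, so that $U_i$ is a union of pairwise disjoint $G$-invariant open sets indexed by $k$, and then combining them by $k$-index sum. On each $W_k$, a disjoint union of open $G$-invariant sets, the maps $s_i\times t_j$ and the corresponding $G$-homotopies glue because the pieces are disjoint and open. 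This gives $n+m+1$ sets covering $K_1\times K_2$, whence the inequality.

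The main obstacle is exactly this combinatorial refinement step done equivariantly. One must ensure the disjoint decompositions exist with $G$-invariant open pieces and that the $W_k$ still cover $K_1\times K_2$; this is where complete normality of the $K_i$ and compactness of $G$ (for averaging) enter. I would first try to import verbatim the proof of \cite[Prop. 2.11]{arora2024equivariant} (Proposition~\ref{sub}), replacing its local sections by relative local sections, since gluing over disjoint open sets is insensitive to that change. A shortcut I would also check is to write the claim as $\sct_G$ of the pulled-back maps via Proposition~\ref{prop: rel secat=secat hpb} and apply Proposition~\ref{sub}. This works when the $p_i$ are $G$-fibrations, using Corollary~\ref{cor: secat of fibration} and noting that $(f_1\times f_2)^*(p_1\times p_2)=f_1^*p_1\times f_2^*p_2$. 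In general one would first replace $p_i$ by a $G$-fibrational substitute, whose existence is standard via the path-space construction.
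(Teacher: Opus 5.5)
The route you present as your main argument breaks down at exactly the step you flag. You ask for $G$-invariant open covers $\{A_i\}$ of $K_1$ and $\{B_j\}$ of $K_2$ such that the products $A_i\times B_j$ with $i+j=k$ are pairwise disjoint. Already for $n=m=1$ and $K_1,K_2$ connected this is impossible. It would force $(A_0\times B_1)\cap(A_1\times B_0)=(A_0\cap A_1)\times(B_0\cap B_1)=\emptyset$, but both intersections are nonempty, since an empty $A_i$ would give $\gsc{G,f_1}{p_1}=0$. Your alternative of splitting each $U_i$ into pairwise disjoint open pieces $U_i^{(k)}$ is also not what the standard refinement lemmas provide.

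The pieces of $W_k$ cannot be products of factor sets with index sum $k$. What does work is Farber's construction from the product inequality for $\TC$, starting from your Haar-averaged $G$-invariant partitions of unity $\{\phi_i\}$, $\{\psi_j\}$ (that part is fine):
\begin{itemize}
\item Put $h_{ab}(x,y)=\phi_a(x)\psi_b(y)$.
\item For nonempty $S\subseteq\{0,\dots,n\}$ and $T\subseteq\{0,\dots,m\}$, let $W(S,T)$ be the set where $h_{ab}>0$ and $h_{ab}>h_{a'b'}$ for all $(a,b)\in S\times T$ and $(a',b')\notin S\times T$.
\end{itemize}
These sets are open and $G$-invariant, and $W(S,T)\subseteq U_a\times V_b$ for every $(a,b)\in S\times T$. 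Two of them with the same $|S|+|T|$ are disjoint, because neither rectangle contains the other. Every $(x,y)$ lies in $W(S,T)$, where $S$ and $T$ are the index sets at which $\phi(x)$ and $\psi(y)$ attain their maxima. So $W_k=\bigcup_{|S|+|T|=k+2}W(S,T)$, for $0\le k\le n+m$, carries glued sections $s_a\times t_b$. An equivalent fix is to take multiplicity covers $\{A_k\}_{k=0}^{n+m}$ and $\{B_k\}_{k=0}^{n+m}$ and use same-index products $A_k\times B_k$.

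Your closing ``shortcut'', by contrast, is exactly the paper's proof. The paper forms the standard $G$-homotopy pullbacks $P_{f_i,p_i}$. It notes that $\tilde p_1\times\tilde p_2\colon P_{f_1,p_1}\times P_{f_2,p_2}\to K_1\times K_2$ is a $G$-homotopy pullback of $p_1\times p_2$ along $f_1\times f_2$. It then applies Proposition~\ref{sub} and concludes with Proposition~\ref{prop: rel secat=secat hpb}. Your fibrant replacement is already built in: $\tilde p_i$ is the pullback along $f_i$ of the mapping-path fibration of $p_i$, hence a $G$-fibration. This is the argument you should write up.

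One caveat applies to it and to the paper alike. Proposition~\ref{sub} requires the base $K_1\times K_2$ to be completely normal, and this does not follow from complete normality of the factors: the Sorgenfrey line is completely normal, but its square is not normal. The repaired direct route avoids this. It needs only that each $K_i$ is normal and that $G$ is compact, because the product partition of unity $h_{ab}$ is automatically continuous on $K_1\times K_2$. So the repaired direct route proves the statement under its stated hypotheses, whereas the pullback route proves it only when $K_1\times K_2$ is completely normal.
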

\begin{proof}
If we consider the standard $G$-homotopy pullback ${P_{f_i,p_i}} $ of $f_i,p_i$ for $i=1,2$, then we have the following $G$-homotopy pullback 
\[\begin{tikzcd}
	{P_{f_1,p_1}}\times {P_{f_2,p_2}} & {E_1\times E_2} \\
	{K_1\times K_2} & {B_1\times B_2}
	\arrow[from=1-1, to=1-2]
	\arrow["{ \tilde p_1 \times  \tilde p_2}",from=1-1, to=2-1]
	\arrow["{p_1\times p_2}", from=1-2, to=2-2]
	\arrow["{f_1 \times f_2}"', from=2-1, to=2-2].
\end{tikzcd}\]
We have $\gsc{G}{ \tilde p_1\times  \tilde p_2} \leq \gsc{G}{ \tilde p_1} + \gsc{G}{ \tilde p_2} $ 
from \cite[Proposition 2.11]{arora2024equivariant}. Now the conclusion follows from Proposition~\ref{prop: rel secat=secat hpb}.
\end{proof}
The following proposition provides a cohomological lower bound, generalizing \cite[Theorem 5.15]{Colman_Grant_Eqtc} and \cite[Proposition 3.8(5)]{Hopf_inv_Gon_Luc_Grant}.

\begin{theorem}\label{prop: secat lb by coh}
    Let $h_G^\ast$ be a multiplicative equivariant cohomology theory. Let $f\colon  K \to B$ and $p\colon  E \to B$ be $G$-maps. If there exist classes $\alpha_1, \dots, \alpha_k \in \ker p^\ast$ such that $f^\ast(\alpha_1 \smile \cdots \smile \alpha_k) \neq 0$, then
    $$ \gsc{G,f}{p} \geq k. $$
\end{theorem}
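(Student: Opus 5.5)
We argue by contraposition, in the style of the classical cup-length argument. Suppose $\gsc{G,f}{p}=n<k$, and fix a $G$-invariant open cover $\{U_0,\dots,U_n\}$ of $K$ with $G$-maps $s_i\colon U_i\to E$ satisfying $p\circ s_i\simeq_G f|_{U_i}$. Since $k\geq n+1$, we may restrict attention to the first $n+1$ classes $\alpha_0,\dots,\alpha_n$ among $\alpha_1,\dots,\alpha_k$ (after reindexing). Indeed, if $f^\ast(\alpha_0\smile\cdots\smile\alpha_n)=0$, then multiplicativity of $f^\ast$ gives
\[
f^\ast(\alpha_1\smile\cdots\smile\alpha_k)=f^\ast(\alpha_0\smile\cdots\smile\alpha_n)\smile f^\ast(\text{remaining classes})=0,
\]
which contradicts the hypothesis. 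So it suffices to prove $f^\ast(\alpha_0\smile\cdots\smile\alpha_n)=0$.

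The first step is to show that each class $f^\ast\alpha_i$ vanishes on $U_i$. Let $\iota_i\colon U_i\hookrightarrow K$ denote the inclusion. Then $\iota_i^\ast f^\ast\alpha_i=(f|_{U_i})^\ast\alpha_i$. By $G$-homotopy invariance of $h_G^\ast$, this equals $(p\circ s_i)^\ast\alpha_i=s_i^\ast p^\ast\alpha_i$, and this is $0$ because $\alpha_i\in\ker p^\ast$.

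The next step uses the long exact sequence of the pair $(K,U_i)$ in $h_G^\ast$. Since $f^\ast\alpha_i$ restricts to zero on $U_i$, it lifts to a relative class $\beta_i\in h_G^\ast(K,U_i)$ that maps to $f^\ast\alpha_i$ under $j_i^\ast\colon h_G^\ast(K,U_i)\to h_G^\ast(K)$. We then apply the relative cup product
\[
h_G^\ast(K,U_0)\otimes\cdots\otimes h_G^\ast(K,U_n)\to h_G^\ast\Bigl(K,\bigcup_i U_i\Bigr)=h_G^\ast(K,K)=0.
\]
Its compatibility with the absolute cup product gives $f^\ast(\alpha_0\smile\cdots\smile\alpha_n)=j^\ast(\beta_0\smile\cdots\smile\beta_n)=0$, which is the desired contradiction.

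The main obstacle is the relative cup product step. The product $h_G^\ast(K,U)\otimes h_G^\ast(K,V)\to h_G^\ast(K,U\cup V)$ needs $\{U,V\}$ to be an excisive couple for $h_G^\ast$. For open $G$-invariant sets in a general multiplicative equivariant theory, this requires the theory to satisfy excision (Mayer--Vietoris) for open $G$-covers. I would take this as part of what ``multiplicative equivariant cohomology theory'' means, as in \cite[Theorem 5.15]{Colman_Grant_Eqtc}, where exactly this argument is carried out for $\sct_G$. Alternatively, the statement reduces directly to that result: by Proposition~\ref{prop: rel secat=secat hpb}, $\gsc{G,f}{p}=\sct_G(\tilde p)$ for the standard $G$-homotopy pullback. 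Moreover, $\tilde p^\ast f^\ast\alpha_i=\tilde f^\ast p^\ast\alpha_i=0$ by homotopy commutativity of the pullback square. So the classes $f^\ast\alpha_i\in\ker\tilde p^\ast$ have nonzero product, and the $\sct_G$ cohomological bound from \cite{Colman_Grant_Eqtc} applied to $\tilde p$ gives $\sct_G(\tilde p)\geq k$. I would present this reduction as the proof, with the direct argument above as justification of that cited bound.
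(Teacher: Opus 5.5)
Your proposal is correct and follows essentially the paper's route. The paper likewise passes to the $G$-homotopy pullback, notes $\tilde p^\ast f^\ast\alpha_i=\tilde f^\ast p^\ast\alpha_i=0$, and then runs the same relative cup-product argument over a cover of $K$ obtained from $\sct_G(\tilde p)=\gsc{G,f}{p}$. Your direct version, which uses $s_i^\ast p^\ast\alpha_i=0$ on $U_i$ and bypasses the pullback, is a harmless streamlining, and your caveat that the relative cup product needs the open $G$-invariant cover to be excisive is something the paper uses without comment.
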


\begin{proof}
    For $1 \leq i \leq k$, let $\beta_i = f^\ast(\alpha_i) \in h_G^\ast(K)$. 
    Following the equivariant homotopy pullback diagram given in Proposition~\ref{prop: rel secat=secat hpb}, where $\tilde{p}: P_{f,p}  \to K$ and $\tilde{f}: P_{f,p}  \to E$, we have 
    $$ \tilde{p}^\ast(\beta_i) = \tilde{p}^\ast f^\ast(\alpha_i) = \tilde{f}^\ast p^\ast(\alpha_i) = 0, $$
    since $\alpha_i \in \ker p^\ast$. Because $f^\ast$ is a ring homomorphism, the product is non-zero:
    $$ \beta_1 \smile \cdots \smile \beta_k = f^\ast(\alpha_1 \smile \cdots \smile \alpha_k) \neq 0. $$

    We note that the arguments presented in \cite[Theorem 2.7]{arora2024equivariant} apply to any multiplicative equivariant cohomology theory, not exclusively Borel cohomology. 

    For the sake of contradiction, suppose that $\gsc{G,f}{p} < k$. By Proposition~\ref{prop: rel secat=secat hpb}, this implies $\gsc{G}{\tilde{p}} < k$. Thus, there exists a $G$-invariant open cover $\{U_1, \dots, U_k\}$ of $K$ admitting $G$-equivariant local sections $\sigma_i\colon  U_i \to P_{f,p}$ such that $\tilde{p} \circ \sigma_i \simeq_G j_i$, where $j_i\colon  U_i \hookrightarrow K$ denotes the inclusion map. 
   Then we obtain
    $$ j_i^\ast(\beta_i) = \sigma_i^\ast \tilde{p}^\ast(\beta_i) = \sigma_i^\ast(0) = 0. $$
    
    The long exact sequence of cohomology for the pair $(K, U_i)$ yields an element $\gamma_i \in h_G^\ast(K, U_i)$ such that $q_i^\ast(\gamma_i) = \beta_i$, where $q_i\colon  K \hookrightarrow (K, U_i)$ is the inclusion of pairs. 
    
    Taking the cup product of these relative classes, we find
    $$ \gamma_1 \smile \cdots \smile \gamma_k \in h_G^\ast\left(K, \bigcup_{i=1}^k U_i\right) = h_G^\ast(K, K) = 0. $$
    By the naturality of the cup product, we must have
    $$ \beta_1 \smile \cdots \smile \beta_k = q^\ast(\gamma_1 \smile \cdots \smile \gamma_k) = q^\ast(0) = 0, $$
    where $q\colon  K \hookrightarrow (K, K)$. This directly contradicts our initial deduction that $\beta_1 \smile \cdots \smile \beta_k \neq 0$. Therefore, we must have $\gsc{G,f}{p} \geq k$.
\end{proof}
\subsection{Behavior of relative sectional category}
We study the behavior of the equivariant relative sectional category under various changes of domains and codomains. The corresponding non-equivariant results can be found in \cite{Calcines_relTC}.
\begin{proposition}
Suppose the following diagram is a $G$-homotopy commutative diagram and $\alpha, \beta, \gamma$ are $G$-homotopy equivalences
\[\begin{tikzcd}
	K & B & E \\
	{K'} & {B'} & {E'}.
	\arrow["f", from=1-1, to=1-2]
	\arrow["\alpha"', from=1-1, to=2-1]
	\arrow["\beta", from=1-2, to=2-2]
	\arrow["p"', from=1-3, to=1-2]
	\arrow["\gamma", from=1-3, to=2-3]
	\arrow["{f'}"', from=2-1, to=2-2]
	\arrow["{p'}", from=2-3, to=2-2]
\end{tikzcd}\]
Then $\gsc{G,f}{p}=\gsc{G,f'}{p'}$.
\end{proposition}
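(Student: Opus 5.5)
The plan is to prove the two inequalities $\gsc{G,f'}{p'}\leq \gsc{G,f}{p}$ and $\gsc{G,f}{p}\leq \gsc{G,f'}{p'}$ directly from the definition, by transporting covers and relative local sections along $\alpha,\beta,\gamma$ and their $G$-homotopy inverses. Homotopy commutativity of the diagram means $\beta\circ f\simeq_G f'\circ\alpha$ and $\beta\circ p\simeq_G p'\circ\gamma$. Choose $G$-homotopy inverses $\alpha',\beta',\gamma'$ of $\alpha,\beta,\gamma$. A standard manipulation then gives $f\circ\alpha'\simeq_G \beta'\circ f'$ and $p\circ\gamma'\simeq_G \beta'\circ p'$. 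For instance,
\[
f\alpha'\simeq_G \beta'\beta f\alpha'\simeq_G \beta' f'\alpha\alpha'\simeq_G \beta' f'.
\]
So the inverse diagram also $G$-homotopy commutes. By this symmetry it suffices to prove one inequality, say $\gsc{G,f'}{p'}\leq\gsc{G,f}{p}$, for an arbitrary homotopy commutative diagram with $\alpha$ a $G$-homotopy equivalence (only the existence of $\alpha'$ is needed). The other inequality follows by applying the same argument to the primed-to-unprimed diagram.

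For that inequality, let $\{U_0,\dots,U_n\}$ be a $G$-invariant open cover of $K$ with $G$-maps $s_i\colon U_i\to E$ and $p s_i\simeq_G f|_{U_i}$. Set $V_i=(\alpha')^{-1}(U_i)\subseteq K'$. These sets are $G$-invariant and open, and they cover $K'$. Define $s_i'=\gamma\circ s_i\circ\alpha'|_{V_i}\colon V_i\to E'$. Then
\[
p' s_i' = p'\gamma s_i\alpha'|_{V_i}\simeq_G \beta p s_i\alpha'|_{V_i}\simeq_G \beta f\alpha'|_{V_i}\simeq_G f'\alpha\alpha'|_{V_i}\simeq_G f'|_{V_i}.
\]
The last step restricts the $G$-homotopy $\alpha\alpha'\simeq_G \mathrm{id}_{K'}$ to $V_i\times I$. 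This restriction is legitimate because we are only precomposing a homotopy on $K'$ with the inclusion $V_i\hookrightarrow K'$; the homotopy is not required to stay inside $V_i$. Each step uses that pre- or post-composing a $G$-homotopy with $G$-maps yields a $G$-homotopy. Hence $\gsc{G,f'}{p'}\leq n$.

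Observe that this direction uses only that $\alpha$ has a right homotopy inverse; $\beta$ and $\gamma$ enter only through the commutativity relations. Only in the reverse direction are the inverses $\beta',\gamma'$ needed, and they enter through the relations above. The main point requiring care is therefore verifying that the inverse diagram $G$-homotopy commutes, which is the short chain of homotopies displayed above. Every other step is formal.

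An alternative route goes through Proposition~\ref{prop: rel secat=secat hpb}: show that the induced map between standard $G$-homotopy pullbacks $P_{f,p}\to P_{f',p'}$ is a $G$-homotopy equivalence covering $\alpha$, and then invoke $G$-homotopy invariance of $\sct_G$. That route requires an equivariant gluing lemma, so I would use the direct argument above.
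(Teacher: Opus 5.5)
Your proof is correct and complete. The paper gives no proof of its own here; it only states that Garc\'ia-Calcines' non-equivariant arguments carry over, and your direct argument supplies such an equivariant proof: you transport the cover along $\alpha'$ and the relative sections along $\gamma$, and check that the inverse diagram again $G$-homotopy commutes. Incidentally, your one-sided inequality is the composite $\gsc{G,f'}{p'}\leq\gsc{G,\beta f}{p'}\leq\gsc{G,f}{p}$ of the two comparison results the paper states right afterwards. These are Proposition~\ref{prop: secat_ff' p leq secat_f p} applied to $\alpha'$ (using $\beta f\alpha'\simeq_G f'$), followed by the change-of-codomain result for the pair $(\beta,\gamma)$.
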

\begin{proposition}
Suppose the following diagram commutes up to $G$-homotopy
\[\begin{tikzcd}
	& B & E \\
	K \\
	& {B'} & {E'}.
	\arrow["\beta", from=1-2, to=3-2]
	\arrow["p"', from=1-3, to=1-2]
	\arrow["\gamma", from=1-3, to=3-3]
	\arrow["f", from=2-1, to=1-2]
	\arrow["{f'}"', from=2-1, to=3-2]
	\arrow["{p'}", from=3-3, to=3-2]
\end{tikzcd}\]
Then $\gsc{G,f}{p}\geq\gsc{G,f'}{p'}$.
\end{proposition}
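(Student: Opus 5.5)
Given an open cover witnessing $\gsc{G,f}{p}=n$, we transport each local section along $\gamma$. Let $\{U_0,\dots,U_n\}$ be a $G$-invariant open cover of $K$ with $G$-maps $s_i\colon U_i\to E$ such that $p\circ s_i\simeq_G f|_{U_i}$. We keep the same cover and take the candidate sections $s_i':=\gamma\circ s_i\colon U_i\to E'$. These are $G$-maps because $\gamma$ and $s_i$ are. If $\gsc{G,f}{p}=\infty$ there is nothing to prove, so we may assume $n$ is finite.

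The key computation is a chain of $G$-homotopies:
\[
p'\circ s_i' = p'\circ\gamma\circ s_i \simeq_G \beta\circ p\circ s_i \simeq_G \beta\circ f|_{U_i} \simeq_G f'|_{U_i}.
\]
The first relation uses the homotopy commutativity $p'\circ\gamma\simeq_G\beta\circ p$ of the right square, precomposed with $s_i$. The second uses $p\circ s_i\simeq_G f|_{U_i}$, postcomposed with $\beta$. The third uses the homotopy commutativity $\beta\circ f\simeq_G f'$ of the triangle, restricted to $U_i$.

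Each step needs only two routine facts: pre- and post-composition with $G$-maps preserve $G$-homotopy, and restriction of a $G$-homotopy $K\times I\to B'$ to the $G$-invariant subset $U_i\times I$ is again a $G$-homotopy. Since $G$-homotopy is an equivalence relation (concatenation of $G$-homotopies is a $G$-homotopy, as $I$ carries the trivial action), we conclude $p'\circ s_i'\simeq_G f'|_{U_i}$. Hence $\{U_i\}$ with the $s_i'$ witnesses $\gsc{G,f'}{p'}\le n$.

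There is no real obstacle here. The only point needing care is verifying that the composed homotopies are equivariant, which is immediate from the diagonal action on $U_i\times I$ with $I$ trivial.
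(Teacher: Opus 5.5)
Your proof is correct and follows the same approach as the paper. The paper omits this proof, saying it is the non-equivariant argument of Garc\'ia-Calcines carried over to the equivariant setting, and that argument is exactly yours: keep the same cover, replace each $s_i$ by $\gamma\circ s_i$, and use the chain $p'\circ\gamma\circ s_i\simeq_G \beta\circ p\circ s_i\simeq_G \beta\circ f|_{U_i}\simeq_G f'|_{U_i}$.
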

\begin{proposition}
Suppose the following diagram commutes up to $G$-homotopy
\[\begin{tikzcd}
	&& E \\
	K & B \\
	&& {E'}.
	\arrow["p"', from=1-3, to=2-2]
	\arrow["\gamma", from=1-3, to=3-3]
	\arrow["f", from=2-1, to=2-2]
	\arrow["{p'}", from=3-3, to=2-2]
\end{tikzcd}\]
Then $\gsc{G,f}{p}\geq\gsc{G,f}{p'}$.
In particular, for the $G$-maps 
\[\begin{tikzcd}
	K & B & E & {E'}
	\arrow["f", from=1-1, to=1-2]
	\arrow["p"', from=1-3, to=1-2]
	\arrow["{p'}"', from=1-4, to=1-3],
\end{tikzcd}\]
we have 
$\gsc{G, f}{p}\leq \gsc{G,f}{p\circ p'}.$
\end{proposition}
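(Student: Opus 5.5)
The proof works directly from the definition: local relative sections for $p$ will be pushed forward along $\gamma$ to give local relative sections for $p'$ over the same open sets.

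Let $\sct_{G,f}(p)=n$. Fix a $G$-invariant open cover $\{U_0,\dots,U_n\}$ of $K$ together with $G$-maps $s_i\colon U_i\to E$ such that $p\circ s_i\simeq_G f|_{U_i}$. The hypothesis is that the triangle commutes up to $G$-homotopy, i.e. $p'\circ\gamma\simeq_G p$ via some $G$-homotopy $H\colon E\times I\to B$.

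Set $s_i'=\gamma\circ s_i\colon U_i\to E'$. This is a composite of $G$-maps, so it is a $G$-map. Precomposing $H$ with $s_i\times \mathrm{id}_I$ gives a $G$-homotopy
\[
p'\circ s_i' = p'\circ\gamma\circ s_i\simeq_G p\circ s_i .
\]
We already have $p\circ s_i\simeq_G f|_{U_i}$, and $G$-homotopy is transitive by concatenating homotopies, with $I$ carrying the trivial action. Hence $p'\circ s_i'\simeq_G f|_{U_i}$.

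So the same cover $\{U_i\}$ of $K$ is a $G$-invariant open cover relative to $f$ for $p'$, and therefore $\sct_{G,f}(p')\le n$. If $\sct_{G,f}(p)=\infty$ there is nothing to prove.

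For the particular case we choose the roles so that the general statement applies. Take the top map of the triangle to be $p\circ p'\colon E'\to B$, the bottom map to be $p\colon E\to B$, and $\gamma=p'\colon E'\to E$. Then $p\circ\gamma=p\circ p'$ holds strictly, so the triangle commutes. The general inequality then gives $\sct_{G,f}(p\circ p')\ge \sct_{G,f}(p)$, which is the claim.

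I do not expect a genuine obstacle. The only points to check are that $G$-homotopies compose and concatenate equivariantly, which holds because $I$ has the trivial action, and that the roles of $p$, $p'$ and $\gamma$ are correctly identified in the special case.
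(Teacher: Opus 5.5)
Your proof is correct and is essentially the argument the paper intends: the paper omits the proof and defers to the non-equivariant argument of Garc\'ia-Calcines, which amounts to post-composing each local relative section $s_i$ with $\gamma$ and using transitivity of $G$-homotopy, exactly as you do. Your role assignment for the special case is also correct: take $\gamma=p'$, with $p\circ p'$ in the role of $p$ and $p$ in the role of $p'$.
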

\begin{proposition}\label{prop: secat_ff' p leq secat_f p}
Suppose the following diagram commutes up to $G$-homotopy
\[\begin{tikzcd}
	K \\
	& B & E. \\
	{K'}
	\arrow["f", from=1-1, to=2-2]
	\arrow["\alpha"', from=1-1, to=3-1]
	\arrow["p"', from=2-3, to=2-2]
	\arrow["{f'}"', from=3-1, to=2-2]
\end{tikzcd}\]
Then $\gsc{G,f}{p}\leq\gsc{G,f'}{p}$.

In particular,
for the $G$-maps
\[\begin{tikzcd}
	{K'} & K & B & E
	\arrow["{f'}", from=1-1, to=1-2]
	\arrow["f", from=1-2, to=1-3]
	\arrow["p"', from=1-4, to=1-3],
\end{tikzcd}\] we have 
$\gsc{G, f\circ f'}{p}\leq\gsc{G,f}{p}.$
\end{proposition}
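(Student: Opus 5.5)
The plan is to work directly with the definition of the equivariant relative sectional category: we pull back a $G$-invariant open cover of $K'$ along $\alpha$ and compose the local sections with $\alpha$. Suppose $\gsc{G,f'}{p}=n$. Then there is a $G$-invariant open cover $\{V_0,\dots,V_n\}$ of $K'$ together with $G$-maps $s_i\colon V_i\to E$ such that $p\circ s_i\simeq_G f'|_{V_i}$. If $\gsc{G,f'}{p}=\infty$ there is nothing to prove.

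Set $U_i=\alpha^{-1}(V_i)\subseteq K$. Since $\alpha$ is a continuous $G$-map, each $U_i$ is open, and it is $G$-invariant: if $\alpha(x)\in V_i$ then $\alpha(gx)=g\alpha(x)\in V_i$. The sets $U_i$ cover $K$ because the $V_i$ cover $K'$. On $U_i$ we define the $G$-map $\sigma_i=s_i\circ\alpha|_{U_i}\colon U_i\to E$. We then compute
\[
p\circ\sigma_i=(p\circ s_i)\circ\alpha|_{U_i}\simeq_G f'|_{V_i}\circ\alpha|_{U_i}=(f'\circ\alpha)|_{U_i}\simeq_G f|_{U_i}.
\]
The first $G$-homotopy is obtained by precomposing the $G$-homotopy $V_i\times I\to B$ with the $G$-map $\alpha|_{U_i}\times\mathrm{id}_I$. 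The second is the restriction to $U_i\times I$ of the $G$-homotopy $f'\circ\alpha\simeq_G f$ given by the hypothesis. Concatenating the two yields $p\circ\sigma_i\simeq_G f|_{U_i}$, so $\sigma_i$ is a $G$-equivariant local section of $p$ relative to $f$. Hence $\gsc{G,f}{p}\le n$.

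For the particular case we apply the statement to the diagram with $K'$ in the role of $K$, $K$ in the role of $K'$, $\alpha=f'$, the map $f\circ f'\colon K'\to B$ in the role of $f$, and $f\colon K\to B$ in the role of $f'$. This triangle commutes strictly, so $\gsc{G,f\circ f'}{p}\le\gsc{G,f}{p}$.

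There is no serious obstacle. The only point needing care is that restricting and precomposing $G$-homotopies preserves equivariance, which holds because $I$ carries the trivial action. Transitivity of $\simeq_G$ under concatenation is standard.
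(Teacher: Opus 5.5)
Your proof is correct. It is the natural argument the paper has in mind: the paper omits the proof, citing the non-equivariant arguments of Garc\'ia-Calcines. You pull back the $G$-invariant cover of $K'$ along $\alpha$, compose the local sections with $\alpha$, and concatenate the two $G$-homotopies, which is that argument carried over to the equivariant setting. Your equivariance checks (the $G$-invariance of $\alpha^{-1}(V_i)$, and that precomposing or restricting $G$-homotopies stays equivariant since $I$ has the trivial action) and the specialization $\alpha=f'$ giving $\gsc{G,f\circ f'}{p}\leq\gsc{G,f}{p}$ are exactly what is needed.
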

Since the proofs of the above propositions follow the same arguments in the equivariant setting as those in \cite[Section 3]{Calcines_relTC}, we omit the proofs.

\subsection{Ganea type charecterization}
We now establish a characterization of the equivariant relative sectional category in terms of the existence of a global section of an associated $G$-fibration.
Given a $G$-map $p\colon E \to B$, we can produce an $(n+1)$-fold fibered join of $p$, defined by
$$\ast_B^n E = \left\{ (e_0, t_0, \dots, e_n, t_n) \mathrel{\Big\vert{}} e_i \in E, \, t_i \in [0,1], \, p(e_i) = p(e_j), \, \sum_{i=0}^n t_i = 1 \right\} {\Bigg/} \sim,$$
where the equivalence relation is generated by
$$(e_0, t_0, \dots, e_i, 0, \dots, e_n, t_n) \sim (e_0, t_0, \dots, e_i', 0, \dots, e_n, t_n).$$
We have a canonical map $\ast_B^np\colon  \ast_B^nE \to B$ given by
$$\left(\ast_B^np\right)([e_0, t_0, \dots, e_n, t_n]) = p(e_0) = \cdots = p(e_n).$$
Moreover, $\ast_B^np$ is a $G$-map where $\ast_B^nE$ is equipped with the diagonal $G$-action, defined as follows:
$$g \cdot [e_0, t_0, \dots, e_n, t_n] = [g \cdot e_0, t_0, \dots, g \cdot e_n, t_n].$$
As shown in \cite[Lemma 3.3]{Symm_grant}, if $p$ is a Serre $G$-fibration with fiber $F$, then $\ast_B^np$ is a Serre $G$-fibration with fiber ${\ast}^nF$, the $(n+1)$-fold join of the fiber $F$.
The proof outlining how the sectional category is related to the join is given by Schwarz \cite[Theorem 3]{Sva} and James \cite[Proposition 8.1]{James}. We follow the ideas of \cite[Proposition 3.4]{Symm_grant} for the equivariant relative sectional category.
\begin{proposition}\label{prop: Ganea char}
Let $G$ be a compact Lie group. Let $p\colon E \to B$ be a Serre $G$-fibration, $K$ be a paracompact $G$-space, and $f\colon K \to B$ be a $G$-map. Then $\gsc{G,f}{p}$ is the least integer $n$ for which there exists a $G$-map $\sigma\colon  K \to \ast_B^n E$ such that $\left(\ast_B^np\right) \circ \sigma = f$.
\end{proposition}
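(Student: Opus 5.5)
Since $p$ is a Serre $G$-fibration, I would first reduce to the absolute case. Corollary~\ref{cor: secat of fibration} gives $\gsc{G,f}{p}=\sct_G(f^\ast p)$, where $f^\ast p\colon f^\ast E=K\times_B E\to K$ is the pullback. Strictly speaking, Corollary~\ref{cor: secat of fibration} is stated for $G$-fibrations. In the Serre case I would instead use the standard $G$-homotopy pullback $P_{f,p}$ together with Proposition~\ref{prop: rel secat=secat hpb}, and I would note that the comparison map $f^\ast E\to P_{f,p}$ over $K$ allows sections to be transported in both directions up to $G$-homotopy over $K$. The key compatibility I need is that fibered joins commute with pullback:
\[
f^\ast\bigl(\ast_B^nE\bigr)\;\cong\;\ast_K^n\bigl(f^\ast E\bigr)
\]
as $G$-spaces over $K$. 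This holds because both sides consist of points $(k,[e_0,t_0,\dots,e_n,t_n])$ with $p(e_i)=f(k)$. Consequently, $G$-maps $\sigma\colon K\to\ast_B^nE$ with $(\ast_B^np)\circ\sigma=f$ correspond exactly to $G$-sections of $\ast_K^n(f^\ast p)$. This reduces the proposition to the absolute statement: for a Serre $G$-fibration $q\colon P\to K$ over a paracompact $K$, $\sct_G(q)\le n$ if and only if $\ast_K^nq$ admits a strict $G$-section.

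For ``$\sct_G(q)\le n\Rightarrow$ section'', I would follow \cite[Proposition 3.4]{Symm_grant}. Take a $G$-invariant open cover $U_0,\dots,U_n$ with $G$-maps $s_i$ such that $q s_i\simeq_G\iota_{U_i}$. Using the $G$-homotopy lifting property of $q$, I would replace each $s_i$ by a strict local section. This requires lifting with respect to $U_i$, which need not be a $G$-CW complex. This is the main obstacle: the Serre hypothesis does not literally give lifting for arbitrary open sets. I would handle it by first using paracompactness and the compact Lie hypothesis to obtain a $G$-invariant partition of unity $\{\phi_i\}$ subordinate to the cover; averaging over Haar measure makes the $\phi_i$ $G$-invariant. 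I would then either work with $P_{f,p}$, which is a Hurewicz $G$-fibration so that homotopy sections can be rectified, or argue via the numerable-cover/Dold-type lifting property for $G$-fibrations over paracompact bases. With strict sections in hand, define
\[
\sigma(k)=[s_0(k),\phi_0(k),\dots,s_n(k),\phi_n(k)],
\]
where $s_i(k)$ is arbitrary when $\phi_i(k)=0$; this is well defined by the join relation. Continuity follows from supports being contained in the $U_i$, equivariance follows from the invariance of the $\phi_i$, and $(\ast q)\sigma=\mathrm{id}$.

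Conversely, given a $G$-section $\sigma$, let $U_i=\{k: t_i(\sigma(k))>0\}$. The coordinate $t_i$ is a well-defined continuous $G$-invariant function on $\ast_K^nP$, so the $U_i$ are $G$-invariant and open, and they cover $K$ because $\sum t_i=1$. On $U_i$, the coordinate $e_i$ is well defined and continuous, which gives a strict $G$-section $k\mapsto e_i(\sigma(k))$ of $q$ over $U_i$. Hence $\sct_G(q)\le n$. Taking the least such $n$ and transporting back through $f^\ast$ finishes the proof.
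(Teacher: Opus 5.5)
Your easy direction is exactly the paper's argument: the coordinates $t_i$, and $e_i$ on $\{t_i>0\}$, give strict local lifts. The reduction to the absolute case via $f^\ast$ is only a cosmetic departure from the paper, which works directly with $\ast^n_BE$. The partition-of-unity step is fine once strict lifts are available.

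The genuine gap is the step you flag yourself: passing from $G$-maps $s_i$ with $p\circ s_i\simeq_G f|_{U_i}$ to $G$-maps $\sigma_i$ with $p\circ\sigma_i=f|_{U_i}$. Neither of your remedies closes it.
\begin{enumerate}
\item A homotopy lift $(s_i,H_i)$ is already a strict lift of $f|_{U_i}$ into the mapping path space $E_p=\{(e,\gamma)\in E\times B^I:\gamma(1)=p(e)\}\to B$, $(e,\gamma)\mapsto\gamma(0)$. Equivalently, it is a strict section of $P_{f,p}\to K$. So working with $P_{f,p}$ only yields a strict lift of $f$ into $\ast^n_BE_p$. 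Getting back into $\ast^n_BE$ requires transporting each $e_i$ back along its path, i.e.\ a continuous lifting function for $p$. That is precisely the Hurewicz property you do not have. The same problem invalidates your claim that sections can be moved ``in both directions'' between $f^\ast E$ and $P_{f,p}$.
\item Dold's local-to-global theorem concerns Hurewicz fibrations over numerable covers. It does not let a Serre fibration lift against arbitrary open subsets of $K$.
\end{enumerate}
The paper's proof has the same hole: it simply writes $p\circ\sigma_i=f|_{U_i}$ right after assuming $\gsc{G,f}{p}\le n$.

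In fact the gap cannot be closed under the stated hypotheses, already in your absolute case. Take the following data:
\begin{itemize}
\item $G$ trivial;
\item $B=K$ the cone on the Cantor set, which is compact metrizable, contractible, and locally path-connected at no point other than the vertex;
\item $E$ the set $B$ with the topology generated by the path components of open subsets of $B$;
\item $p=\mathrm{id}\colon E\to B$ and $f=\mathrm{id}_B$.
\end{itemize}
Every continuous map from a locally path-connected space into $B$ is continuous into $E$. Hence $p$ has unique lifts against all CW complexes, so it is a Serre fibration, and $E$ is locally path-connected. The constant map to the vertex is a homotopy section, so $\gsc{G,f}{p}=0$.

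Now let $U\subseteq B$ be a nonempty open set. A strict lift of $f|_U$ would be a homeomorphism of $U$ onto an open subspace of $E$. That forces $U$ to be locally path-connected, which it is not, since it contains non-vertex points. By the coordinate argument, no strict $\sigma\colon K\to\ast^n_BE$ over $f$ exists for any $n$. So the statement is false as written.

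There are two ways to get a true statement.
\begin{itemize}
\item Assume $p$ is a (Hurewicz) $G$-fibration. Apply the $G$-homotopy lifting property for $U_i$ to a $G$-homotopy from $p\circ s_i$ to $f|_{U_i}$, starting at $s_i$; this produces $\sigma_i$. Both your argument and the paper's then go through.
\item Keep $p$ Serre but take $K$ a $G$-CW complex, which is the case used in the dimension--connectivity theorem. Build the strict lift into $\ast^n_BE_p$ as above. The map $\ast^n_BE\to\ast^n_BE_p$ is a map of Serre $G$-fibrations inducing weak equivalences on the fixed points of fibres. Equivariant obstruction theory over $K$ then replaces the lift by a strict lift into $\ast^n_BE$.
\end{itemize}
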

\begin{proof}
First, assume that $\ast_B^np$ admits a $G$-section $\sigma\colon  K \to \ast_B^n E$ relative to $f$; that is, $\left(\ast_B^np\right) \circ \sigma = f$.
Let $t_i\colon  \ast_B^n E \to [0,1]$ be the coordinate functions for $0 \leq i \leq n$, which are continuous $G$-maps.
Let $U_i = \sigma^{-1}(t_i^{-1}((0,1]))$; that is, $U_i = \{ k \in K \mid t_i(\sigma(k)) > 0 \}$. These are $G$-invariant open subsets of $K$. Because $t_i(\sigma(k)) > 0$ on $U_i$, the $i$-th component of $\sigma(k)$ is well-defined in $E$. Thus, we may define local $G$-sections $\sigma_i\colon  U_i \to E$ relative to $f$from the formula $$\sigma(k)=[\sigma_0(b),t_0,\dots,\sigma_n(b),t_n].$$

Conversely, suppose $\gsc{G,f}{p} \leq n$. Then there exist $G$-invariant open subsets $U_0, \dots, U_n$ that cover $K$, along with $G$-maps $\sigma_i\colon  U_i \to E$ such that $p \circ \sigma_i = f\vert{}_{U_i}$ for $0 \leq i \leq n$.
By \cite[Lemma 3.2]{Symm_grant}, a paracompact $G$-space is a $G$-paracompact space. Therefore, we may take a $G$-equivariant partition of unity $h_0, \dots, h_n$ subordinate to the open cover $\{U_i\}_{i=0}^n$ of $K$. We can then define a $G$-section $\sigma\colon  K \to \ast_B^n E$ by the formula
$$\sigma(k) = [\sigma_0(k), h_0(k), \dots, \sigma_n(k), h_n(k)],$$
which satisfies $\left(\ast_B^np\right) \circ \sigma = f$.
\end{proof}
We now establish dimension-connectivity bounds on equivariant relative sectional category. We use Grant's ideas from \cite[Theorem 3.5]{Symm_grant}.

\begin{theorem}\label{prop: secat ub dim-conn}
Let $K$ be a $G$-CW complex of dimension at least $2$, and let $f\colon K \to B$ be a $G$-map. If $p\colon E \to B$ is a Serre $G$-fibration such that the fiber of the fixed-point map $p^H\colon E^H\to B^H$ is $(s-1)$-connected, for all subgroups $H$ of $G$, then
$$\gsc{G,f}{p} < \frac{\mathrm{hdim}_G(K) +1}{s+1}.$$
\end{theorem}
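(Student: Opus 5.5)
Since $p$ is a Serre $G$-fibration, the first move is to pass to the pullback fibration. By Corollary~\ref{cor: secat of fibration} (or Proposition~\ref{prop: rel secat=secat hpb} together with Proposition~\ref{prop: hpb=pb}), we have $\gsc{G,f}{p}=\sct_G(f^\ast p)$. The pullback $f^\ast p\colon K\times_B E\to K$ is again a Serre $G$-fibration. Pulling back commutes with taking $H$-fixed points, so $(f^\ast p)^H=(f^H)^\ast(p^H)$. The fibre of this map over $k\in K^H$ is the fibre of $p^H$ over $f(k)\in B^H$, which is $(s-1)$-connected by hypothesis. The problem is therefore reduced to bounding $\sct_G(q)$ for the Serre $G$-fibration $q=f^\ast p$ over the $G$-CW complex $K$, all of whose fixed-point maps have $(s-1)$-connected fibres.

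There are two routes from here. The first is to invoke Proposition~\ref{connect} directly. Since $q^H$ has $(s-1)$-connected fibres, the long exact sequence of homotopy groups shows that $q^H$ is an $s$-equivalence. Taking $m=s$ then gives $\sct_G(q)<(\mathrm{hdim}_G(K)+1)/(s+1)$, with $\dim K\geq 2$ supplying the dimension hypothesis.

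If one prefers to follow Grant's argument, the second route uses the Ganea characterization of Proposition~\ref{prop: Ganea char}. A $G$-CW complex is paracompact, so $\gsc{G,f}{p}\le n$ as soon as $\ast^n_B p$ admits a strict $G$-section relative to $f$. Equivalently, one needs a $G$-section of $\ast^n_K q=f^\ast(\ast^n_B p)$. By \cite[Lemma 3.3]{Symm_grant}, $(\ast^n_K q)^H=\ast^n_{K^H}q^H$ is a Serre fibration with fibre the $(n+1)$-fold join of an $(s-1)$-connected space. That join is $((n+1)(s+1)-2)$-connected. Equivariant obstruction theory (Bredon) on the $G$-cells $G/H\times D^j$ of $K$ then builds a section cell by cell. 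Extending over such a cell amounts to extending a map $S^{j-1}\to(\text{fibre of }q^H)$ over $D^j$, which is possible whenever $j-1\le (n+1)(s+1)-2$. Consequently a section exists once $\dim K\le (n+1)(s+1)-1$, that is, once $n+1\ge(\dim K+1)/(s+1)$.

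Before choosing $n$, replace $K$ by a $G$-homotopy equivalent complex $K'$ with $\dim K'=\mathrm{hdim}_G(K)$. This is justified by the invariance of $\sct_{G,f}$ under $G$-homotopy equivalence of the domain (the first proposition in the behavior subsection). Take $n$ to be the least integer with $(n+1)(s+1)\ge \mathrm{hdim}_G(K)+1$. Then $n<(\mathrm{hdim}_G(K)+1)/(s+1)$, which is the claimed strict inequality.

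The main obstacle is the cell-by-cell extension step: it must respect the orbit type $G/H$, which forces the use of the fibres of $q^H$ rather than of $q$, and it needs the join connectivity bound for those fixed-point fibres. A secondary point is passing from $K$ to the minimal-dimension model $K'$ while keeping $f$ and the relative sectional category unchanged, which the homotopy-invariance proposition handles. If Proposition~\ref{connect} may be used as stated, the reduction in the first paragraph already completes the proof.
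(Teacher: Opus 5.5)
Your proposal is correct. Your second route is essentially the paper's proof: the Ganea characterization combined with cell-by-cell equivariant obstruction theory, using that $\ast^n(F^H)$ is $((n+1)(s+1)-2)$-connected. You go further than the paper in one respect. You explicitly replace $K$ by a $G$-CW model of dimension $\mathrm{hdim}_G(K)$ using the change-of-domain results, whereas the paper does this tacitly when it writes $\mathrm{hdim}_G(K)$ rather than $\dim K$ in its obstruction-vanishing condition.

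Your first route is genuinely different. Instead of rerunning obstruction theory in the relative setting, you pull back along $f$ and observe that $(K\times_B E)^H=K^H\times_{B^H}E^H$. Hence the fixed-point fibres of $f^\ast p$ are fibres of $p^H$, so $(f^\ast p)^H$ is an $s$-equivalence, and you then quote the absolute bound of Proposition~\ref{connect} over the base $K$ with $m=s$. This buys brevity and shows that the relative bound is a formal consequence of the absolute one. The paper's argument, by contrast, is self-contained and ties the result to its Ganea characterization; in effect it transplants the proof of Proposition~\ref{connect} to the relative setting.

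One correction to route one. Corollary~\ref{cor: secat of fibration} rests on Proposition~\ref{prop: hpb=pb}, which is stated for (Hurewicz) $G$-fibrations. So it does not give the equality $\gsc{G,f}{p}=\sct_G(f^\ast p)$ when $p$ is only a Serre $G$-fibration. You only need $\gsc{G,f}{p}\le\sct_G(f^\ast p)$, and that holds for any strictly commuting square. If $\sigma\colon U\to K\times_B E$ satisfies $f^\ast p\circ\sigma\simeq_G\iota_U$, then $s=\mathrm{pr}_E\circ\sigma$ satisfies $p\circ s=f\circ f^\ast p\circ\sigma\simeq_G f|_U$. Likewise, route two uses only the easy direction of Proposition~\ref{prop: Ganea char} (a strict section of the join yields a cover), which needs neither paracompactness nor any lifting property.
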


\begin{proof}
By Proposition~\ref{prop: Ganea char}, it is sufficient to show that the canonical map $\ast_B^np\colon  \ast_B^nE \to B$ admits a $G$-section relative to $f$ whenever $n \geq \frac{\mathrm{hdim}_G(K) +1}{s+1}-1$.
The obstructions to the existence of such a section belong to the Bredon cohomology groups
$$H_G^{i+1}(K;\pi_i(\ast^n \mathscr{F})),$$
where $\pi_i(\ast^n \mathscr{F})$ denotes the equivariant local coefficient system on $K$. The value of this system on the fixed-point space $K^H$ is isomorphic to $\pi_i((\ast^n F)^H)$.
Since the fiber $F^H$ is $(s-1)$-connected, standard properties of the topological join imply that the $(n+1)$-fold join $(\ast^n F)^H$ is $((s+1)(n+1)-2)$-connected. Hence, all obstruction groups vanish whenever the connectivity meets or exceeds the dimension of the complex minus one; that is, whenever
$$\mathrm{hdim}_G(K) - 1 \leq (s+1)(n+1) - 2.$$
Rearranging this inequality yields
$$\frac{\mathrm{hdim}_G(K) +1}{s+1} \leq n+1,$$
which completes the proof.
\end{proof}
\section{Equivariant topological complexity of equivariant maps}\label{sec: eq tc of eq maps}
In this section, we introduce the notion of equivariant topological complexity of $G$-maps, which generalizes Colman and Grant's \cite{Colman_Grant_Eqtc} equivariant topological complexity of $G$-spaces.
\subsection{Equivariant LS-category of equivariant maps}
We now study the properties of the Lusternik--Schnirelmann category for $G$-maps.
We have the product inequality for this invariant.
\begin{proposition}\label{porp: cat prod ineq}
Suppose $f\colon X\to Y$ and $f'\colon X'\to Y'$ are $G$-maps with both $Y$, $Y'$ are $G$-connected and $X\times X'$ is completely normal $G$-space. If $Y^G$ and $Y'^G$  are nonempty, then
    \[
    \cat_G(f\times f')\leq \cat_G(f)+\cat_G(f')
    .\]
\end{proposition}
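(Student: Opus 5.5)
The plan is to reduce the product inequality for $\cat_G$ of maps to the product inequality for equivariant relative sectional category, Proposition~\ref{prop: secat prod ineq}, via Corollary~\ref{cor: cat=secat of one point inclusion}.

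First, choose fixed points $y_0\in Y^G$ and $y_0'\in Y'^G$, and let $p\colon\{y_0\}\hookrightarrow Y$ and $p'\colon\{y_0'\}\hookrightarrow Y'$ be the inclusions. Since $Y$ and $Y'$ are $G$-connected, Corollary~\ref{cor: cat=secat of one point inclusion} gives
\[
\cat_G(f)=\sct_{G,f}(p), \qquad \cat_G(f')=\sct_{G,f'}(p').
\]

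Next, observe that $p\times p'$ is the inclusion of the $G$-fixed point $(y_0,y_0')$ into $Y\times Y'$. The product $Y\times Y'$ is again $G$-connected, because $(Y\times Y')^H=Y^H\times Y'^H$ is path-connected for every closed subgroup $H$. Applying Corollary~\ref{cor: cat=secat of one point inclusion} to the $G$-map $f\times f'$ therefore gives
\[
\cat_G(f\times f')=\sct_{G,f\times f'}(p\times p').
\]
Proposition~\ref{prop: secat prod ineq} then yields $\sct_{G,f\times f'}(p\times p')\le \sct_{G,f}(p)+\sct_{G,f'}(p')$, and combining the three displayed equalities gives the claim.

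The main obstacle is matching the hypotheses of Proposition~\ref{prop: secat prod ineq}. That result requires $G$ to be a compact Lie group and $K_1$, $K_2$ completely normal. What is really used in its proof is the subadditivity of Proposition~\ref{sub} applied to $\tilde p_1\times\tilde p_2$, and there complete normality is needed on the product base $X\times X'$, which is exactly what we are given.

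If this reduction turns out not to match cleanly, I will argue directly instead. Take categorical covers $\{U_i\}_{i=0}^n$ of $X$ for $f$ and $\{V_j\}_{j=0}^m$ of $X'$ for $f'$. By the proof of Proposition~\ref{prop: secat ub by cat}(2), I may assume each $f|_{U_i}$ is $G$-homotopic to the constant map at $y_0$, and each $f'|_{V_j}$ to the constant map at $y_0'$. Then $(f\times f')|_{U_i\times V_j}$ is $G$-homotopic to the constant map at $(y_0,y_0')$. Now apply the standard Ganea/Oprea--Strom reindexing to the sets $U_i\times V_j$, using complete normality of $X\times X'$, to obtain $n+m+1$ $G$-invariant open sets $W_k$. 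Each $W_k$ is a disjoint union of sets $U_i\times V_j$ with $i+j=k$, and each is separated by $G$-invariant open sets. Since each piece of $W_k$ is deformed to the same fixed point, the homotopies glue over the disjoint union. This gluing, using a common fixed target point, is the delicate step and is why the hypotheses $Y^G\neq\emptyset$ and $Y'^G\neq\emptyset$ appear.
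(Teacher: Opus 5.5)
Your proposal is correct and matches the paper, which proves this in one line by combining Corollary~\ref{cor: cat=secat of one point inclusion} with Proposition~\ref{prop: secat prod ineq}. Your extra checks fill in details the paper leaves implicit: that $Y\times Y'$ is $G$-connected with fixed point $(y_0,y_0')$, and that the complete normality actually used is that of $X\times X'$ as the base of $\tilde p_1\times\tilde p_2$ in Proposition~\ref{sub}; the fallback direct argument is not needed.
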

\begin{proof}
    The proof of the above theorem follows from Proposition~\ref{prop: secat prod ineq} and Corollary~\ref{cor: cat=secat of one point inclusion}. 
\end{proof}
The relationship between the equivariant LS-category of a $G$-space and the LS-category of its orbit space is established in \cite{Eqlscategory}. These results can be extended to a $G$-map.
\begin{proposition}
    Let $f\colon  X \to Y$ be a $G$-map and let $f/G\colon  X/G \to Y/G$ be the induced map on the quotient spaces. Then 
    $$\cat_G(f) \geq \cat(f/G).$$
\end{proposition}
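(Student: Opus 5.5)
We prove the inequality by pushing a categorical cover of $X$ down to the orbit space. Let $\pi_X\colon X\to X/G$ and $\pi_Y\colon Y\to Y/G$ be the quotient maps, so $\pi_Y\circ f=(f/G)\circ\pi_X$. Assume $\cat_G(f)=n<\infty$ (otherwise there is nothing to prove), and fix a $G$-invariant open cover $\{U_0,\dots,U_n\}$ of $X$ together with $G$-homotopies $H_i\colon U_i\times I\to Y$ from $f|_{U_i}$ to a $G$-map $c_i$ with $c_i(U_i)\subseteq \mathcal O_{y_i}$ for some $y_i\in Y$. We will show that $\{\pi_X(U_i)\}$ is a cover of $X/G$ witnessing $\cat(f/G)\le n$.

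First, the sets $V_i:=\pi_X(U_i)$ are open in $X/G$, since $\pi_X$ is an open map; indeed $\pi_X^{-1}(\pi_X(W))=\bigcup_{g}gW$ is open for any open $W$. Because $U_i$ is $G$-invariant, $U_i=\pi_X^{-1}(V_i)$, and $U_i/G\cong V_i$ as spaces: the restriction of an open quotient map to a saturated open set is again a quotient map. The $V_i$ cover $X/G$ because the $U_i$ cover $X$.

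Next, we descend the homotopies. Since $H_i$ is $G$-equivariant, $\pi_Y\circ H_i$ is constant on $G$-orbits of $U_i\times I$, where $G$ acts trivially on $I$. It therefore factors set-theoretically through $(U_i\times I)/G=(U_i/G)\times I=V_i\times I$, giving a map $\bar H_i\colon V_i\times I\to Y/G$. This factorization is the main point needing care: we need $\pi_X\times \mathrm{id}_I\colon U_i\times I\to V_i\times I$ to be a quotient map. This holds because $\pi_X\times\mathrm{id}_I$ is open and surjective, being a product of open surjections, and every open surjection is a quotient map. Hence $\bar H_i$ is continuous.

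Finally, we identify the ends of $\bar H_i$. At $t=0$ we have $\bar H_i(\pi_X(x),0)=\pi_Y f(x)=(f/G)(\pi_X(x))$, so $\bar H_i(-,0)=(f/G)|_{V_i}$. At $t=1$ we have $\bar H_i(\pi_X(x),1)=\pi_Y(c_i(x))\in \pi_Y(\mathcal O_{y_i})=\{[y_i]\}$, so $\bar H_i(-,1)$ is constant. Thus each $(f/G)|_{V_i}$ is nullhomotopic, which gives $\cat(f/G)\le n=\cat_G(f)$. The only nontrivial point-set issue is the continuity of $\bar H_i$, which is handled by openness of the quotient maps as above; no compactness of $G$ is needed beyond the standing assumptions.
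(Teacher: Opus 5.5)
Your proof is correct and follows the same route as the paper. You project each $G$-categorical set $U_i$ to $U_i/G\subseteq X/G$ and descend the $G$-homotopy from $f|_{U_i}$ to an orbit-valued map into a nullhomotopy of $(f/G)|_{U_i/G}$; you also justify the point-set step the paper leaves implicit, namely that $\pi_X\times\mathrm{id}_I$ is an open surjection and hence a quotient map, so the descended homotopy is continuous.
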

\begin{proof}
    Let $U \subseteq X$ be a $G$-invariant open subset such that $f|_U \simeq_G c_{y_0}$, where $c_{y_0}: U \to Y$ denotes a $G$-map that sends $U$ into a single orbit $\mathcal{O}_{y_0}$ of a point ${y_0} \in Y$. 

    Passing to the quotient spaces, this equivariant homotopy induces a classical homotopy between the restricted map $(f/G)|_{U/G}$ and the constant map $c\colon  U/G \to Y/G$, which is defined by $c([x]) = [{y_0}]$ for all $[x] \in U/G$. 
    
    Consequently, any $G$-categorical open cover of $X$ relative to $f$ projects to a categorical open cover of $X/G$ relative to $f/G$. This implies that $\cat_G(f) \geq \cat(f/G)$.
\end{proof}
For free $G$-actions, we have the following equality relating the equivariant LS-category of a $G$-map with the LS-category of the corresponding quotient map. 
\begin{theorem}\label{prop: free action cat/G}
    Let $G$ be a compact Lie group acting freely on $G$-spaces $X$ and $Y$, where $X$ is metrizable. Let $f\colon  X \to Y$ be a $G$-map. Then $\cat_G(f) = \cat(f/G)$.    
\end{theorem}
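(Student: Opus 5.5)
The inequality $\cat_G(f)\geq\cat(f/G)$ was proved in the preceding proposition, so only $\cat_G(f)\leq\cat(f/G)$ remains. The plan is to lift a categorical cover of $X/G$ relative to $f/G$ back to $X$. Let $\pi_X\colon X\to X/G$ and $\pi_Y\colon Y\to Y/G$ be the orbit maps. Because $G$ is a compact Lie group acting freely, these are principal $G$-bundles; we use Gleason's slice theorem here, which needs complete regularity, guaranteed on $X$ by metrizability. Suppose $V\subseteq X/G$ is open and $(f/G)|_V$ is homotopic to a constant map at $[y_0]$. Set $U=\pi_X^{-1}(V)$, which is a $G$-invariant open subset of $X$. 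The open sets $U$ obtained this way cover $X$, so it suffices to show that $f|_U$ is $G$-homotopic to a $G$-map with values in the orbit $\mathcal O_{y_0}$.

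The key tool is the equivariant covering homotopy property of the principal bundle $\pi_Y$. Since $X$ is metrizable, $X/G$ is metrizable (the quotient by a compact group of a metric space is metrizable), hence paracompact. Therefore $\pi_Y$ has the covering homotopy property for homotopies out of $V\times I$ whose initial lift is the given map. Concretely, let $H\colon V\times I\to Y/G$ be the homotopy from $(f/G)|_V$ to the constant map. The map $f|_U\colon U\to Y$ is a $G$-map covering $H_0\circ\pi_X$, that is, a bundle map from the principal bundle $U\to V$ into $Y\to Y/G$. Viewing $f|_U$ as a section of the associated bundle $U\times_G Y\to V$, I would lift the homotopy $H$ to a $G$-homotopy $\tilde H\colon U\times I\to Y$ with $\tilde H_0=f|_U$ and $\pi_Y\circ\tilde H_t=H_t\circ\pi_X$. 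Equivalently, this is the homotopy lifting property of $U\times_G Y\to V$, which holds because that map is a fibre bundle over a paracompact base.

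At the end of the homotopy, $\pi_Y\circ\tilde H_1$ is constant at $[y_0]$, so $\tilde H_1(U)\subseteq\pi_Y^{-1}([y_0])=\mathcal O_{y_0}$. Thus $f|_U\simeq_G\tilde H_1$, and $\tilde H_1$ takes values in a single orbit, so $U$ is $G$-categorical relative to $f$. Applying this to each of the $n+1$ sets in a categorical cover of $X/G$ relative to $f/G$ gives $\cat_G(f)\leq\cat(f/G)$.

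The main obstacle is the middle step: making sure the lifted homotopy is genuinely $G$-equivariant. An equivariant lift of a homotopy along $\pi_Y$ is the same thing as a lift in the associated bundle $U\times_G Y\to V$, so the argument needs that bundle to be a fibration. This requires local triviality of $\pi_Y$, which comes from the slice theorem for free actions of compact Lie groups (Gleason). This is why the hypotheses that $G$ is Lie and $X$ is metrizable are used.
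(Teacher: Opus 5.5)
Your proposal follows the same route as the paper. You pull each categorical open set $V\subseteq X/G$ back to $U=\pi_X^{-1}(V)$, lift the null-homotopy of $(f/G)|_V$ to a $G$-homotopy starting at $f|_U$, and observe that the end map lands in $\pi_Y^{-1}([y_0])=\mathcal O_{y_0}$. The paper simply cites Palais's covering homotopy theorem (Bredon, Ch.~II) for the lifting step, whereas you unpack it via Gleason's theorem and the covering homotopy theorem for bundle maps into the principal bundle $\pi_Y$ over the paracompact base $V$.

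Two small corrections to that unpacking. First, the map whose lifting property you need is $U\times_G Y\to V\times(Y/G)$, a bundle with fibre $G$, not $U\times_G Y\to V$; the homotopy lifting property of the latter gives no control over covering $H$. Second, local triviality of $\pi_Y$ via Gleason needs $Y$ to be completely regular, and metrizability of $X$ does not supply this (it only handles $\pi_X$ and the paracompactness of $V$); the paper's appeal to Palais's theorem also leaves this point implicit.
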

\begin{proof}
    By the previous proposition, we already know that $\cat_G(f) \geq \cat(f/G)$. It remains to show that $\cat_G(f) \leq \cat(f/G)$.
    
    Let $U \subseteq X/G$ be an open subset such that the restricted map $(f/G)|_U$ is homotopic to a constant map $c\colon  U \to Y/G$, which sends all elements to a single point $[y_0] \in Y/G$. Let this classical homotopy be given by $F\colon  U \times I \to Y/G$. 
    
    Let $q\colon  X \to X/G$ be the quotient map. Define $V = q^{-1}(U)$, which is a $G$-invariant open subset of $X$. The restricted map $f|_V\colon  V \to Y$, given by the composition $V \hookrightarrow X \xrightarrow{f} Y$, is a $G$-equivariant lift of $(f/G)|_U$.

    Since $X$ and $Y$ are free $G$-spaces, the lifted homotopy ${F}$ preserves the orbits in the sense of \cite[Chapter I, Exercise 5]{Bredon1972}. 
    Since $X$ is metrizable, every subspace is also metrizable. Therefore, by the lifting theorem of Palais \cite[Theorem II.7]{Bredon1972}, the homotopy $F\colon  U \times I \to Y/G$ lifts to a $G$-equivariant homotopy $\tilde{F}\colon  V \times I \to Y$ starting at $\tilde{F}(-, 0) = f|_V$.
    
    At $t=1$, the map $\tilde{F}(-, 1)\colon  V \to Y$ is a $G$-equivariant lift of the constant map $F(-,1) = c$. Therefore, $\tilde{F}(-, 1)$ maps all of $V$ into the single orbit $\mathcal{O}_{y_0} = G \cdot y_0$ in $Y$.
    
    This establishes a $G$-homotopy between $f|_V$ and a map into a single orbit. Consequently, any open cover of $X/G$ relative to $f/G$ pulls back to a $G$-categorical open cover of $X$ relative to $f$, implying that $\cat_G(f) \leq \cat(f/G)$. This completes the proof of the equality.
\end{proof}

The following notion will be used to give a lower bound for the equivariant LS-category of a map.

\begin{definition}
Let $f\colon X\to Y$ be a $G$-map. The \emph{$G$-equivariant cup length} of $f$, denoted by $\cl_G(f)$, is the largest non-negative integer $k$ such that there exist cohomology classes $\alpha_1, \dots , \alpha_k\in \tilde h_G^\ast(Y)$ such that the image of cup products $f^\ast(\alpha_1 \cdots \alpha_k)\neq 0$.
\end{definition}
Note that, if $f $ is identity, $\cl_G(f)$ reduces to $\cl_G(Y).$ Also, for a trivial group action $\cl_G(f)$ reduces to the non-equivariant $\cl(f).$ 
\begin{proposition}\label{prop: cat geq cup}
    Let $h_G^\ast$ be a multiplicative equivariant cohomology theory, and let  $f\colon  X\ra Y$ be a $G$-map where $Y^G$ is nonempty. Then $\cat_G(f)\geq \cl_G(f)$.
\end{proposition}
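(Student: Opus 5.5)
Our plan is to reduce the statement to the cohomological lower bound of Theorem~\ref{prop: secat lb by coh}, using the identification of $\cat_G(f)$ with a relative sectional category. Fix a fixed point $z\in Y^G$ and let $p\colon \{z\}\hookrightarrow Y$ be the inclusion. By Proposition~\ref{prop: secat lb by cat}, since $\{z\}$ is trivially $G$-contractible, we have $\cat_G(f)\leq \sct_{G,f}(p)$. For a lower bound this is the wrong direction, so we will not simply quote Corollary~\ref{cor: cat=secat of one point inclusion} (which also needs $G$-connectivity of $Y$). Instead we argue directly with categorical sets, mimicking the proof of Theorem~\ref{prop: secat lb by coh}.

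Let $k=\cl_G(f)$, with classes $\alpha_1,\dots,\alpha_k\in\tilde h_G^\ast(Y)$ satisfying $f^\ast(\alpha_1\cdots\alpha_k)\neq 0$. Suppose for contradiction that $X=U_1\cup\dots\cup U_k$ with each $U_i$ a $G$-invariant open set on which $f|_{U_i}\simeq_G c_i$, where $c_i$ takes values in a single orbit $\mathcal O_{y_i}\cong G/G_{y_i}$. Put $\beta_i=f^\ast(\alpha_i)$.

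The key step is to show $j_i^\ast\beta_i=0$, where $j_i\colon U_i\hookrightarrow X$. By homotopy invariance, $j_i^\ast\beta_i=c_i^\ast\alpha_i$, and $c_i$ factors through the orbit $\mathcal O_{y_i}\hookrightarrow Y$. Here the fixed point is used: we need the restriction of a reduced class $\alpha_i$ to an orbit to vanish. Reduced cohomology is to be understood as the kernel of restriction to the base point $z$ (or equivalently of $Y\to\ast$, split by $z$). The difficulty is that the orbit need not contain $z$. Our first attempt is to interpret $\tilde h^\ast_G(Y)$ as the kernel of $h_G^\ast(Y)\to h_G^\ast(\mathcal O)$ for every orbit, or to invoke $G$-connectivity together with Lemma~\ref{lem: conservation of isotropy} to deform $\mathcal O_{y_i}$ into $z$, so that $c_i$ becomes $G$-homotopic to the constant map at $z$. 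Then $c_i^\ast\alpha_i$ factors through $h_G^\ast(\{z\})$, where the restriction of a reduced class is zero. We expect this step to be the main obstacle, since without connectivity one must argue that orbit-valued maps kill reduced classes. If needed, we would add the $G$-connectedness hypothesis on $Y$, as in Corollary~\ref{cor: cat=secat of one point inclusion}.

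With $j_i^\ast\beta_i=0$ in hand, the long exact sequence of the pair $(X,U_i)$ lifts each $\beta_i$ to $\gamma_i\in h_G^\ast(X,U_i)$. The relative cup product $\gamma_1\cdots\gamma_k$ then lies in $h_G^\ast(X,\bigcup U_i)=h_G^\ast(X,X)=0$, and by naturality $f^\ast(\alpha_1\cdots\alpha_k)=\beta_1\cdots\beta_k=0$. This is a contradiction, so $\cat_G(f)\geq k$. As in Theorem~\ref{prop: secat lb by coh}, we assume the relative cup product is available for $G$-invariant open sets, which holds for a multiplicative theory with excisive open couples.
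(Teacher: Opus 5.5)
Your argument is the paper's argument written out by hand. The paper proves this in one line by combining Corollary~\ref{cor: cat=secat of one point inclusion} with Theorem~\ref{prop: secat lb by coh}. The direction it needs from that corollary, $\sct_{G,f}(\{z\}\hookrightarrow Y)\le\cat_G(f)$, comes from Proposition~\ref{prop: secat ub by cat}(2), which uses Lemma~\ref{lem: conservation of isotropy} to deform orbits into $z$. Those are exactly your two steps: orbit deformation, then the relative cup-product argument.

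The step you flag as the main obstacle is a real failure, not a technicality. It is exactly where the paper's proof silently uses the $G$-connectedness of $Y$ that the corollary requires. Without that hypothesis the statement, with $\tilde h_G^\ast(Y)=\ker(z^\ast)$ as you read it, is false.

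\textbf{Counterexample with $Y$ disconnected.} Let $G=\mathbb{Z}/2$ act trivially on $Y=\{a,b\}$, use Borel cohomology with $\mathbb{F}_2$ coefficients, and take $z=a$, $X=\{b\}$, and $f$ the inclusion.
\begin{itemize}
\item[(i)] $\cat_G(f)=0$, since $f$ already takes values in the orbit $\{b\}$.
\item[(ii)] In $H^\ast_G(Y)=H^\ast(BG)\oplus H^\ast(BG)$, the class $\alpha=(0,u)$, with $u\in H^1(BG;\mathbb{F}_2)$, lies in $\ker(z^\ast)$.
\item[(iii)] $f^\ast(\alpha^k)=u^k\neq 0$ for every $k$, so $\cl_G(f)\ge 1$.
\end{itemize}

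\textbf{Path-connectedness alone is not enough.} Take the reflection action on $S^1$, with fixed points $p,q$. The exact sequence of the pair $(S^1,\{p,q\})$ shows that $H^n_G(S^1)\to H^n_G(\{p,q\})$ is an isomorphism for $n\ge 2$. So there is a class in $H^2_G(S^1;\mathbb{F}_2)$ that vanishes at $p$ and restricts to $u^2$ at $q$, and the same failure occurs. The hypothesis genuinely needed is $G$-connectedness, which controls all fixed-point sets.

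So replace your ``if needed'' with an actual added hypothesis: $Y$ is $G$-connected, as the paper's proof implicitly assumes. Then Lemma~\ref{lem: conservation of isotropy} deforms $\mathcal O_{y_i}$ equivariantly into $z$, giving $c_i^\ast\alpha_i=0$ and hence $j_i^\ast\beta_i=0$. The rest of your argument goes through verbatim.

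Your other option also gives a correct result: define $\tilde h_G^\ast(Y)$ as the classes that restrict to zero on every orbit. That version needs neither the fixed point nor connectivity. However, it changes the invariant $\cl_G(f)$ and is not the reading intended by the statement.
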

\begin{proof}
    The proof follows from Corollary~\ref{cor: cat=secat of one point inclusion} and Theorem~\ref{prop: secat lb by coh}.
\end{proof}
We have a dimension-connectivity upper bound for the invariant.
\begin{proposition}\label{prop: cat leq dim con}
Let $G$ be a compact Lie group, and let $X$ be a $G$-CW complex of dimension at least 2 with a non-empty fixed point. Let $f\colon X \ra Y$ be a $G$-map where $Y^H$ is $s$-connected, for every subgroup $H$ of $G$, then 
   $$ \cat_{G}{(f)}< \frac{\mathrm{hdim}_G(X) +1}{s+1}.$$
\end{proposition}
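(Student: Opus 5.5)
The plan is to realize $\cat_G(f)$ as an equivariant relative sectional category and then apply the dimension--connectivity bound of Theorem~\ref{prop: secat ub dim-conn}. Since $Y^H$ is $s$-connected for every closed subgroup $H$ (in particular $Y^G$ is nonempty and path-connected, and every $Y^H$ is path-connected, so $Y$ is $G$-connected), Corollary~\ref{cor: cat=secat of one point inclusion} applies. It gives, for a fixed point $z\in Y^G$, the identity $\cat_G(f)=\sct_{G,f}(\{z\}\hookrightarrow Y)$. The inclusion $\{z\}\hookrightarrow Y$ is not a fibration, so I would replace it by a $G$-fibration. Take the based path space $P_zY=\{\gamma\in Y^I\mid \gamma(0)=z\}$ with the pointwise $G$-action, which is well defined because $z$ is fixed, and let $\mathrm{ev}_1\colon P_zY\to Y$ be evaluation at $1$. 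This is a $G$-fibration, and $P_zY$ is $G$-contractible onto the constant path at $z$ by shrinking paths. Hence the triangle $\{z\}\to P_zY\to Y$ is a $G$-homotopy equivalence over $Y$.

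By the invariance results of the ``Behavior'' subsection (the proposition with $\gamma$ a $G$-homotopy equivalence, applied in both directions), we get
\[
\sct_{G,f}(\{z\}\hookrightarrow Y)=\sct_{G,f}(\mathrm{ev}_1).
\]
Alternatively, Corollary~\ref{cor: contractibility secat=cat} gives $\sct_{G,f}(\mathrm{ev}_1)=\cat_G(f)$ directly, since $P_zY$ is $G$-contractible, $Y$ is $G$-connected, and $(P_zY)^G\ni c_z$ is nonempty. I would use this second route, as it avoids the invariance propositions whose proofs were omitted.

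Next, identify the fibres of the fixed-point maps. For a closed subgroup $H$, we have $(P_zY)^H=P_z(Y^H)$, the based path space of $Y^H$, because a path is $H$-fixed iff it lies pointwise in $Y^H$. Thus $\mathrm{ev}_1^H\colon P_z(Y^H)\to Y^H$ is the usual path fibration, and its fibre over $y$ is the space of paths in $Y^H$ from $z$ to $y$. This fibre is homotopy equivalent to $\Omega Y^H$ because $Y^H$ is path-connected. Since $Y^H$ is $s$-connected, $\Omega Y^H$ is $(s-1)$-connected. The hypothesis of Theorem~\ref{prop: secat ub dim-conn} therefore holds with $K=X$, which is a $G$-CW complex of dimension at least $2$ and so satisfies the dimension assumption. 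That theorem yields
\[
\cat_G(f)=\sct_{G,f}(\mathrm{ev}_1)<\frac{\mathrm{hdim}_G(X)+1}{s+1}.
\]

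The main obstacle is the bookkeeping around the hypotheses of the cited results. First, the nonempty fixed point needed in Corollary~\ref{cor: contractibility secat=cat} must be in $Y$ (or $P_zY$), not in $X$; it follows from $s$-connectivity of $Y^G$, with the case $H=G$ giving nonemptiness. The fixed point in $X$ assumed in the statement is then harmless but apparently unnecessary. Second, I need to check that $\mathrm{ev}_1$ is a (Serre) $G$-fibration. This follows by the standard adjunction argument for lifting $G$-homotopies, and since path spaces of fixed points behave well, the fibre computation above is legitimate.
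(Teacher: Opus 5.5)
Your proposal is correct and follows the paper's strategy: identify $\cat_G(f)$ with an equivariant relative sectional category using a fixed point $z\in Y^G$, then apply Theorem~\ref{prop: secat ub dim-conn}. The paper applies that theorem directly to the inclusion $\{z\}\hookrightarrow Y$ from Corollary~\ref{cor: cat=secat of one point inclusion}, which is not a Serre $G$-fibration (its fibres are empty or a point), so your replacement by $\mathrm{ev}_1\colon P_zY\to Y$ via Corollary~\ref{cor: contractibility secat=cat} is exactly what is needed for the theorem's hypotheses to hold, since $(P_zY)^H=P_z(Y^H)$ has fibres $\simeq\Omega(Y^H)$, which are $(s-1)$-connected.
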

\begin{proof}
    The proof follows from Corollary~\ref{cor: cat=secat of one point inclusion} and \cref{prop: secat ub dim-conn}.
\end{proof}
\subsection{Equivaraint TC of equivariant maps (Scott's version)}\label{subsec: eq tc scotts version}
In this subsection, we introduce the equivariant analogue of  Scott's notion of topological complexity of maps.
\begin{definition}
  Let $f\colon X\to Y$ be a $G$-map. the equivariant topological complexity of $f$ (in the sense of Scott), denoted by $\TC_G^{Sc}(f)$, is defined as the smallest non-negative integer $n$ such that there exists a $G$-invariant open cover $\{U_0,\dots,U_n\}$ of $X\times X$ and $G$-maps $s_i\colon U_i\to Y^I$ satisfying $\pi_Y\circ s_i\simeq_G (f\times f)|_{U_i}$ for each $0\leq i\leq n$.
\end{definition}
Equivalently, we have $\TC^{Sc}_G(f):=\sct_{G,f\times f}(\pi_Y)$.
\begin{proposition}
    If $f,g\colon  X\ra Y$ are two $G$-homotopic maps, then $\TC_G^{Sc}(f)=\TC_G^{Sc}(g)$.
\end{proposition}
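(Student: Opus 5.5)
Since $\TC^{Sc}_G(f)=\sct_{G,f\times f}(\pi_Y)$, the plan is to reduce the statement to a homotopy-invariance property of the equivariant relative sectional category in the variable $f$. The cleanest route is the change-of-domain result, Proposition~\ref{prop: secat_ff' p leq secat_f p}, applied with $\alpha=\mathrm{id}_{X\times X}$. If $f\simeq_G g$ via a $G$-homotopy $H\colon X\times I\to Y$, then $H\times H\colon X\times X\times I\to Y\times Y$, given by $(x,x',t)\mapsto (H(x,t),H(x',t))$, is a $G$-homotopy from $f\times f$ to $g\times g$. It is $G$-equivariant for the diagonal action because $H$ is. Hence the triangle
\[
\begin{tikzcd}
X\times X \arrow[r,"f\times f"] \arrow[d,"\mathrm{id}"'] & Y\times Y & Y^I \arrow[l,"\pi_Y"'] \\
X\times X \arrow[ur,"g\times g"'] & &
\end{tikzcd}
\]
commutes up to $G$-homotopy. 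Proposition~\ref{prop: secat_ff' p leq secat_f p} then gives $\sct_{G,f\times f}(\pi_Y)\le \sct_{G,g\times g}(\pi_Y)$. Swapping the roles of $f$ and $g$ (using the reversed homotopy) gives the reverse inequality, and hence equality.

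The proof of Proposition~\ref{prop: secat_ff' p leq secat_f p} is omitted in the paper, so I would also include the direct argument, which is short. Let $\{U_i\}$ be a $G$-invariant open cover of $X\times X$ with $G$-maps $s_i\colon U_i\to Y^I$ satisfying $\pi_Y\circ s_i\simeq_G (g\times g)|_{U_i}$. Restricting $H\times H$ to $U_i\times I$ gives a $G$-homotopy $(f\times f)|_{U_i}\simeq_G (g\times g)|_{U_i}$. Since $G$-homotopy is an equivalence relation (concatenation and reversal of $G$-homotopies remain $G$-equivariant because $G$ acts trivially on $I$), we get $\pi_Y\circ s_i\simeq_G (f\times f)|_{U_i}$. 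The same cover and the same sections therefore witness $\TC^{Sc}_G(f)\le \TC^{Sc}_G(g)$.

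I expect no real obstacle. The only points to check are that the product homotopy is $G$-equivariant for the diagonal action on $X\times X$ and $Y\times Y$, and that restricting to $G$-invariant open sets preserves $G$-homotopies. Both are immediate from the definitions.
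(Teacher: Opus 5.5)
Your proof is correct and follows the paper's approach. The paper's proof is just the chain $\TC^{Sc}_G(f)=\sct_{G,f\times f}(\pi_Y)=\sct_{G,g\times g}(\pi_Y)=\TC^{Sc}_G(g)$ with the middle equality left unjustified. Your $G$-equivariant product homotopy $(f\times f)\simeq_G(g\times g)$, combined with the change-of-domain proposition for $\alpha=\mathrm{id}_{X\times X}$ (or with your direct reuse of the same cover and sections), is exactly the missing justification.
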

\begin{proof}
Follows from the following equalities
 $$\TC^{Sc}_G(f)=\sct_{G,f\times f}(\pi_Y)=\sct_{G,g\times g}(\pi_Y)=\TC^{Sc}_G(g).$$  
\end{proof}
\begin{remark} We note the following.
\begin{enumerate}
    \item If $G$ acts trivially on $X$, then for a $G$-map $f\colon X\to Y$, we have $\TC^{Sc}_G(f)=\TC^{Sc}(f)$. 
    \item If $f=\mathrm{id}_{X}$, then we recover Colman-Grant's  equivariant topological complexity: $$\TC^{Sc}_G(f)=\TC_G(X).$$
\end{enumerate}
\end{remark}
\begin{proposition}
Let $f\colon X\to Y$ be a $G$-map with the following pullback diagram: $$
\xymatrix{
(f \times f)^\ast Y^I \ar[rr]^{\overline{f\times f}} \ar[d]_{(f \times f)^\ast {\pi_Y} } & & Y^I \ar[d]^{{\pi_Y}} \\
  X\times X \ar[rr]_{f\times f} & & Y \times Y.
}
$$   Then
$$\TC^{Sc}_G(f)=\sct_G((f \times f)^\ast {\pi_Y}).$$    
\end{proposition}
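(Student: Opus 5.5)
The plan is to reduce the statement to Corollary~\ref{cor: secat of fibration}. By definition, $\TC^{Sc}_G(f)=\sct_{G,f\times f}(\pi_Y)$. Corollary~\ref{cor: secat of fibration} says that whenever $p$ is a $G$-fibration, $\sct_{G,f}(p)=\sct_G(f^\ast p)$. So it suffices to check one thing: the free path fibration $\pi_Y\colon Y^I\to Y\times Y$, $\gamma\mapsto(\gamma(0),\gamma(1))$, is a $G$-fibration. Here $Y^I$ carries the action $(g\cdot\gamma)(t)=g\gamma(t)$ and $Y\times Y$ the diagonal action. Once this is established, apply the corollary with $K=X\times X$, $B=Y\times Y$, $E=Y^I$, $p=\pi_Y$, and with the relative map $f\times f$. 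This gives $\TC^{Sc}_G(f)=\sct_G\big((f\times f)^\ast\pi_Y\big)$.

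The key step is the $G$-fibration property of $\pi_Y$, which I would prove directly. Let $Z$ be a $G$-space, $H\colon Z\times I\to Y\times Y$ a $G$-homotopy, and $\tilde h_0\colon Z\to Y^I$ a $G$-lift of $H_0$. The classical path-fibration lifting formula (as in Farber's or Colman--Grant's treatment) does the job. By adjunction it amounts to extending a map defined on $Z\times(I\times\{0\}\cup\partial I\times I)$ to $Z\times I\times I$. Do this with a fixed retraction $r\colon I\times I\to I\times\{0\}\cup\partial I\times I$: set
\[
\tilde H(z,s)(t)=\Phi\big(z,r(t,s)\big),
\]
where $\Phi$ is assembled from $\tilde h_0$ on the bottom edge and from the two components of $H$ on the sides. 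Since $G$ acts trivially on $I$ and $r$ involves only the $I$-coordinates, $\tilde H$ is automatically $G$-equivariant whenever $\Phi$ is. Equivariance of $\Phi$ follows from that of $\tilde h_0$ and $H$. Continuity uses the exponential law, for which I would assume $Y$ is locally compact Hausdorff or work in compactly generated spaces, consistent with the paper's conventions. This equivariance check is the only step needing real care, and I expect it to be routine; in fact it is already implicit in Colman--Grant's definition of $\TC_G(X)$ as $\sct_G(\pi_X)$.

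Finally, I would note why the stated square is the right one. The diagram is the strict $G$-pullback
\[
(f\times f)^\ast Y^I=\{(x,x',\gamma)\mid \gamma(0)=f(x),\ \gamma(1)=f(x')\},
\]
with the diagonal $G$-action. By Proposition~\ref{prop: hpb=pb} it is a $G$-homotopy pullback because $\pi_Y$ is a $G$-fibration. Proposition~\ref{prop: rel secat=secat hpb} then gives the equality directly, as an alternative to citing the corollary.
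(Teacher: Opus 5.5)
Your proposal is correct and matches the paper's proof, which simply observes that $\pi_Y$ is a $G$-fibration and invokes Corollary~\ref{cor: secat of fibration}. Your explicit equivariant lifting via a retraction of $I\times I$ onto $I\times\{0\}\cup\partial I\times I$ just fills in this standard fact, which the paper takes for granted. One small correction: you need no local compactness assumption on $Y$ and no compactly generated convention, because the exponential law is only applied with exponent $I$, which is compact Hausdorff, so the evaluation $Y^I\times I\to Y$ and the passage to adjoints are continuous for arbitrary $Y$.
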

\begin{proof}
Since ${\pi_Y}$ is a $G$-fibration, the desired equality follows from Corollary \ref{cor: secat of fibration}.    
\end{proof}
The equivariant topological complexity of a map is bounded by the equivariant topological complexities of its domain and codomain.
\begin{proposition}\label{prop: scotts tc ineq tcX tcY}
Let $f\colon X\to Y$ be a $G$-map. Then 
\[
\TC_G^{Sc}(f)\leq \mathrm{min}\{\TC_G(X),\TC_G(Y)\}.
\]    
\end{proposition}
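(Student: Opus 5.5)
We prove the two inequalities $\TC_G^{Sc}(f)\leq \TC_G(Y)$ and $\TC_G^{Sc}(f)\leq \TC_G(X)$ separately. Throughout we use $\TC^{Sc}_G(f)=\sct_{G,f\times f}(\pi_Y)$, where $\pi_Y\colon Y^I\to Y\times Y$ is the $G$-fibration $\gamma\mapsto(\gamma(0),\gamma(1))$. We also use $\TC_G(Z)=\sct_G(\pi_Z)$ for Colman--Grant's equivariant topological complexity.

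\emph{Bound by $\TC_G(Y)$.} This follows directly from Proposition~\ref{prop: rel secat leq secat}. Take $p=\pi_Y$ and $K=X\times X$, with the $G$-map $f\times f\colon X\times X\to Y\times Y$; here $G$ acts diagonally and $f\times f$ is equivariant. The proposition then gives
\[
\sct_{G,f\times f}(\pi_Y)\leq \sct_G(\pi_Y)=\TC_G(Y).
\]

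\emph{Bound by $\TC_G(X)$.} We would use the change-of-domain and change-of-fibration results of the previous subsection. The $G$-map $f^I\colon X^I\to Y^I$, $\gamma\mapsto f\circ\gamma$, satisfies $\pi_Y\circ f^I=(f\times f)\circ\pi_X$ strictly. This gives a commutative diagram over $X\times X$ in which $\pi_X$ and $\pi_Y$ are both compared with $f\times f$.

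The direct argument is as follows. Let $U\subseteq X\times X$ be a $G$-invariant open set with a $G$-map $s\colon U\to X^I$ such that $\pi_X\circ s\simeq_G \iota_U$. Put $s'=f^I\circ s\colon U\to Y^I$. It is a $G$-map because $(g\cdot(f\circ\gamma))(t)=gf(\gamma(t))=f(g\gamma(t))$. Then
\[
\pi_Y\circ s'=(f\times f)\circ\pi_X\circ s\simeq_G (f\times f)\circ\iota_U=(f\times f)|_U .
\]
Applying this to each member of a $G$-invariant open cover realizing $\TC_G(X)$ gives $\sct_{G,f\times f}(\pi_Y)\leq \TC_G(X)$.

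In the language of the earlier propositions, this is the chain $\sct_{G,f\times f}(\pi_Y)\leq \sct_{G,f\times f}((f\times f)\circ\pi_X)\leq \sct_{G,\mathrm{id}}(\pi_X)$. The first step follows from the change-of-fibration proposition, applied with $\gamma=f^I$ to $p=(f\times f)\circ\pi_X$ and $p'=\pi_Y$. For the second step we would just give the direct argument above, since the hypotheses of the change-of-domain result do not literally match.

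\emph{Main obstacle.} The only point needing care is that $f^I$ is a $G$-map and that the homotopies stay equivariant. Post-composing a $G$-homotopy with the $G$-map $f\times f$ gives a $G$-homotopy, so this works, and no hypotheses on $X$ or $Y$ are needed. Taking the minimum of the two bounds finishes the proof.
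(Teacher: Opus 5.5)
Your proposal is correct and follows the paper's route. The bound by $\TC_G(Y)$ comes from Proposition~\ref{prop: rel secat leq secat} applied to $f\times f$, and the bound by $\TC_G(X)$ comes from post-composing a local section of $\pi_X$ with $f^I$. The only cosmetic difference is that you work with $G$-homotopy sections ($\pi_X\circ s\simeq_G \iota_U$) rather than strict ones; this matches Definition~\ref{def:eqsecat} and changes nothing in the argument.
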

\begin{proof}
Since $\TC_G^{Sc}(f)=\sct_G((f \times f)^\ast {\pi_Y})$, the following inequality follows from Proposition~\ref{prop: rel secat leq secat}:
\[
\TC^{Sc}_G(f)=\sct_G((f \times f)^\ast {\pi_Y}) \leq \sct_G(\pi_Y)=\TC_G(Y).
\]
To show the inequality $\TC^{Sc}_G(f)\leq \TC_G(X)$, consider an open $G$-invariant subset $U$ of $X\times X$ with a $G$-section $s\colon U\to X^I$ of $\pi_X$. Define $s'\colon U\to Y^I$ by $s'(x,y)(t)=f (s(x,y)(t))$. Note that we have  $\pi_Y\circ s'(x,y)=(s'(x,y)(0),s'(x,y)(1))=(f(x),f(y))$ for $(x,y)\in U$. This proves the desired inequality.
\end{proof}
By replacing the map $f\colon  K \to B$ with $f \times f\colon  X\times X \to Y \times Y$ and $p$ with $\pi_Y$ in Proposition~\ref{prop: restricted relative secat}, we obtain the following result.
\begin{proposition}\label{prop: restricted Scott TC}
    Let $H$ and $H'$ be closed subgroups of $G$ such that $X^H$ is $H'$-invariant. Then for a $G$-map $f\colon X\to Y$,  $\TC_{H'}^{Sc}(f^H) \leq \TC_{G}^{Sc}(f)$.
\end{proposition}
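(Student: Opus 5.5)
The plan is to reduce the statement to Proposition~\ref{prop: restricted relative secat}, applied with $K = X\times X$, $B = Y\times Y$, $E = Y^I$, the $G$-map $f\times f$ in place of $f$, and $\pi_Y$ in place of $p$. Since $\TC^{Sc}_G(f)=\sct_{G,f\times f}(\pi_Y)$ by definition, that proposition gives
\[
\sct_{H',(f\times f)^H}(\pi_Y^H)\leq \sct_{G,f\times f}(\pi_Y)=\TC^{Sc}_G(f).
\]
The remaining work is to identify the left-hand side with $\TC^{Sc}_{H'}(f^H)$ and to check the invariance hypotheses.

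\textbf{Fixed points.} The $H$-fixed points of the relevant spaces are:
\begin{itemize}
  \item $(X\times X)^H = X^H\times X^H$, since the action is diagonal;
  \item $(Y\times Y)^H = Y^H\times Y^H$;
  \item $(Y^I)^H=(Y^H)^I$, since a path $\gamma$ satisfies $h\gamma=\gamma$ for all $h\in H$ exactly when $\gamma(t)\in Y^H$ for every $t$.
\end{itemize}
Under these identifications, $(f\times f)^H=f^H\times f^H$ and $\pi_Y^H=\pi_{Y^H}\colon (Y^H)^I\to Y^H\times Y^H$, the free path fibration of $Y^H$. The restricted invariant is therefore $\sct_{H',f^H\times f^H}(\pi_{Y^H})=\TC^{Sc}_{H'}(f^H)$, provided these spaces carry the restricted $H'$-action.

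\textbf{Invariance hypotheses.} Proposition~\ref{prop: restricted relative secat} needs $K^H$, $E^H$, $B^H$ to be $H'$-invariant. The statement only assumes $X^H$ is $H'$-invariant, which gives $H'$-invariance of $X^H\times X^H$. For $Y^H$ and $(Y^H)^I$ some extra justification is needed. I would treat this implicitly: for $f^H\colon X^H\to Y^H$ to be an $H'$-map at all, so that $\TC^{Sc}_{H'}(f^H)$ is defined, $Y^H$ must be $H'$-invariant. From that, $(Y^H)^I$ and $Y^H\times Y^H$ are $H'$-invariant automatically.

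This bookkeeping is the only real obstacle. If one wants to avoid adding the assumption, the first thing to try is the observation that $H'$ normalizing $H$ gives invariance of all fixed sets. Failing that, I would state that the $H'$-invariance of $Y^H$ is implicit in the definition of $\TC^{Sc}_{H'}(f^H)$.
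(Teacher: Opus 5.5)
Your argument is correct and is the paper's: the statement is obtained by substituting $f\times f\colon X\times X\to Y\times Y$ and $\pi_Y$ into Proposition~\ref{prop: restricted relative secat}. The paper leaves the identifications $(X\times X)^H=X^H\times X^H$, $(Y^I)^H=(Y^H)^I$ and $\pi_Y^H=\pi_{Y^H}$ implicit, and you have spelled them out. Your remark is also justified: $H'$-invariance of $Y^H$ (automatic, for instance, when $H'$ normalizes $H$) is needed for $f^H$ to be an $H'$-map, hence for $\TC^{Sc}_{H'}(f^H)$ to be defined, and the paper's hypothesis on $X^H$ alone does not supply it.
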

\begin{corollary}\label{cor: restricted Scott TC}
    In particular, we have the following inequalities for a $G$-map $f$:
    \begin{enumerate}
        \item $\TC^{Sc}(f^H) \leq \TC_{G}^{Sc}(f)$ for every closed subgroup $H$ of $G$.
        \item $\TC_{H'}^{Sc}(f) \leq \TC_{G}^{Sc}(f)$ for every closed subgroup $H'$ of $G$.
    \end{enumerate}
\end{corollary}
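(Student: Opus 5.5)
Both inequalities come out of Proposition~\ref{prop: restricted Scott TC} by a suitable choice of the pair $(H,H')$. In each case I will check its hypothesis, namely that $X^H$ (and likewise $Y^H$, $(Y^H)^I$ and $X^H\times X^H$) is $H'$-invariant. I also need to identify the resulting invariant with the one named in the corollary.

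\textbf{Part (1).} Take $H$ to be an arbitrary closed subgroup and $H'=\{e\}$, the trivial subgroup. Invariance under the trivial group is automatic, so Proposition~\ref{prop: restricted Scott TC} gives $\TC^{Sc}_{\{e\}}(f^H)\le \TC^{Sc}_G(f)$.

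It remains to see that $\TC^{Sc}_{\{e\}}(f^H)=\TC^{Sc}(f^H)$. By the first item of the remark following the definition of $\TC^{Sc}_G$, for the trivial group every open set is invariant and every $\{e\}$-homotopy is an ordinary homotopy. Hence the definition reduces verbatim to Scott's $\TC^{Sc}(f^H)=\sct_{f^H\times f^H}(\pi_{Y^H})$.

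One small identification is needed here: $(X\times X)^H=X^H\times X^H$ and $(Y^I)^H=(Y^H)^I$. With these, restricting $\pi_Y$ to fixed points gives exactly $\pi_{Y^H}$, and $(f\times f)^H=f^H\times f^H$. This is what makes Proposition~\ref{prop: restricted relative secat}, applied to $p=\pi_Y$, specialize to Scott's invariant of $f^H$.

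\textbf{Part (2).} Take $H=\{e\}$ and $H'$ an arbitrary closed subgroup. Then $X^H=X$ and $Y^H=Y$, which are $H'$-invariant since they are $G$-spaces. Also $f^{\{e\}}=f$, now regarded as an $H'$-map by restricting the action, so Proposition~\ref{prop: restricted Scott TC} yields $\TC^{Sc}_{H'}(f)\le\TC^{Sc}_G(f)$ directly.

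The main (mild) obstacle is only the bookkeeping identification $(Y^I)^H=(Y^H)^I$ in part (1). A path $\gamma$ satisfies $h\gamma=\gamma$ for all $h\in H$ exactly when $\gamma(t)\in Y^H$ for all $t$. Moreover, the subspace topology on this fixed set agrees with the compact-open topology of $(Y^H)^I$, since $Y^H$ is a subspace of $Y$. Everything else is a direct specialization of the earlier propositions.
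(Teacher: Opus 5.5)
Your proposal is correct and matches the paper, which states this corollary without a separate proof: parts (1) and (2) are the cases $H'=\{e\}$ and $H=\{e\}$ of Proposition~\ref{prop: restricted Scott TC}. The paper leaves implicit the identifications $(X\times X)^H=X^H\times X^H$, $(Y^I)^H=(Y^H)^I$ and $\pi_Y^H=\pi_{Y^H}$ that you check (it uses the same identification in the proof of Proposition~\ref{prop: scott cat ub lb}).
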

The following notion will be used to give a lower bound for the equivariant topological complexity of a map.
\begin{definition}
Let $f\colon X\to Y$ be a $G$-map. The \emph{$G$-equivariant zero-divisor cup length} of $f$, denoted by $\zcl_G(f)$, is the largest  non-negative integer $k$ such that there exist cohomology classes $\alpha_1, \dots , \alpha_k\in h_G^\ast(Y\times Y)$  satisfying $\Delta^\ast(\alpha_i)=0$ such that the image of cup products $(f\times f)^\ast(\alpha_1 \cdots \alpha_k)\neq 0$ where $\Delta\colon Y\to Y\times Y$ is the diagonal map.
\end{definition}

Note that, if $f $ is identity, $\zcl_G(f)$ reduces to $\zcl_G(Y).$ Also, for a trivial group action $\zcl_G(f)$ reduces to the non-equivariant $\zcl(f).$

\begin{proposition}\label{prop: lb ub for Scott TC}
Suppose $G$ is a compact Lie group and $f\colon X\to Y$ is a $G$-map. Then 
$$\zcl_G(f)\leq \TC_G^{Sc}(f).$$
Moreover, if $X$ is $G$-CW complex of dimension at least $2$ where $Y^H$ is $s$-connected for every subgroup $H$ of $G$, then 
$$
\TC_G^{Sc}(f)< \frac{2 \mathrm{hdim}_G(X)+1}{s+1}
.$$
\end{proposition}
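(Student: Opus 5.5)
Both parts will come from specializing the general results on $\gsc{G,f}{p}$ to $\TC_G^{Sc}(f)=\sct_{G,f\times f}(\pi_Y)$. Here $K=X\times X$, $B=Y\times Y$, $E=Y^I$, $p=\pi_Y$, and the relative map is $f\times f$.

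For the lower bound I will use Theorem~\ref{prop: secat lb by coh}. The evaluation map $\pi_Y\colon Y^I\to Y\times Y$ is $G$-homotopy equivalent to the diagonal $\Delta\colon Y\to Y\times Y$. Concretely, the constant-path inclusion $c\colon Y\to Y^I$ is a $G$-homotopy equivalence, with $G$-homotopy inverse $\mathrm{ev}_0$ given by contracting paths to their starting points. Moreover $\pi_Y\circ c=\Delta$. Hence $\ker \pi_Y^\ast=\ker \Delta^\ast$ in $h_G^\ast(Y\times Y)$. Take classes $\alpha_1,\dots,\alpha_k$ realizing $\zcl_G(f)=k$. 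Then each $\alpha_i$ lies in $\ker\pi_Y^\ast$ and $(f\times f)^\ast(\alpha_1\smile\cdots\smile\alpha_k)\neq 0$. Theorem~\ref{prop: secat lb by coh} then gives $\TC^{Sc}_G(f)\ge k$.

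For the upper bound I will use Theorem~\ref{prop: secat ub dim-conn}. The map $\pi_Y$ is a $G$-fibration, hence a Serre $G$-fibration. I will take the $G$-CW structure on $X\times X$ with the diagonal action. For compact Lie $G$ this is standard, possibly after passing to a $G$-homotopy equivalent complex, and it gives $\mathrm{hdim}_G(X\times X)\le 2\,\mathrm{hdim}_G(X)$. It is also of dimension at least $2$ since $\dim X\ge 2$. Next I need the connectivity of the fibres of $\pi_Y^H$. Since $(Y^I)^H=(Y^H)^I$, the map $\pi_Y^H$ is the free path fibration of $Y^H$, and its fibre over a point is $H$-homotopy equivalent to $\Omega Y^H$. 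If $Y^H$ is $s$-connected, then $\Omega Y^H$ is $(s-1)$-connected, which is exactly the hypothesis of Theorem~\ref{prop: secat ub dim-conn}. That theorem yields
\[
\TC^{Sc}_G(f)<\frac{\mathrm{hdim}_G(X\times X)+1}{s+1}\le\frac{2\,\mathrm{hdim}_G(X)+1}{s+1}.
\]

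The main obstacle is the product step. One must justify that $X\times X$ with the diagonal action is $G$-homotopy equivalent to a $G$-CW complex of dimension at most $2\,\mathrm{hdim}_G(X)$. This follows by triangulating products of orbits $G/H\times G/K$, which are smooth compact $G$-manifolds of the appropriate dimension, provided cell dimensions are counted for the orbit-type cells. I would cite the standard result for compact Lie groups, or phrase the bound in terms of $\mathrm{hdim}_G(X\times X)$ if necessary. The cohomological part is routine once the identification $\ker\pi_Y^\ast=\ker\Delta^\ast$ is in place.
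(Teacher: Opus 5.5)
Your route is the paper's own: its proof just invokes Theorem~\ref{prop: secat lb by coh} and Theorem~\ref{prop: secat ub dim-conn} with $K=X\times X$ and $p=\pi_Y$. Your lower bound is complete: $c^\ast$ is an isomorphism and $\pi_Y\circ c=\Delta$, so $\ker\pi_Y^\ast=\ker\Delta^\ast$. Your fibre computation $\Omega Y^H$ is also right.

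\textbf{The gap.} It is the step $\mathrm{hdim}_G(X\times X)\le 2\,\mathrm{hdim}_G(X)$, which you call standard. It holds for finite $G$, where $G/H\times G/K$ is a finite disjoint union of orbits. It fails for positive-dimensional compact Lie groups. Under the diagonal action the orbit space of $G/H\times G/K$ is $H\backslash G/K$, which usually has positive dimension. So a $G$-CW structure on a product of two $0$-cells already needs $G$-cells of positive dimension, and triangulating the products of orbits does not avoid this.

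\textbf{Counterexample.} Let $G=S^1$ act on $X=S^1\times S^2$ by rotating the first factor.
\begin{itemize}
\item $X$ is a free $G$-CW complex of dimension $2$ with $X/G=S^2$, so $\mathrm{hdim}_G(X)=2$.
\item $X\times X$ is free, with orbit space $S^1\times S^2\times S^2$ via $(g,a,h,b)\mapsto(g^{-1}h,a,b)$.
\item Any $G$-CW model of $X\times X$ is free, since it maps equivariantly to a free space. Its orbit space is homotopy equivalent to $S^1\times S^2\times S^2$, which has $H_5\neq 0$.
\item Hence $\mathrm{hdim}_G(X\times X)=5>4$.
\end{itemize}
So Theorem~\ref{prop: secat ub dim-conn} applied to $X\times X$ only gives $\TC^{Sc}_G(f)<(\mathrm{hdim}_G(X\times X)+1)/(s+1)$. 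Your fallback of stating the bound with $\mathrm{hdim}_G(X\times X)$ proves a weaker statement than the one claimed. The paper's one-line proof passes over exactly the same point.

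\textbf{A repair via category.} You can get the stated bound without estimating $\mathrm{hdim}_G(X\times X)$, using only results already in the paper.
\begin{enumerate}
\item For $s\ge 0$, the hypothesis with $H=G$ gives $Y^G\neq\emptyset$, and $Y$ is $G$-connected.
\item Proposition~\ref{prop: scott cat ub lb}(2) and Proposition~\ref{porp: cat prod ineq} give $\TC^{Sc}_G(f)\le\cat_G(f\times f)\le 2\cat_G(f)$. The second inequality needs complete normality of $X\times X$, the hypothesis the paper uses for its product inequalities.
\item As in Proposition~\ref{prop: cat leq dim con}, apply Theorem~\ref{prop: secat ub dim-conn} with $K=X$ to the based path fibration $PY\to Y$ at some $y_0\in Y^G$. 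Its fixed-point fibres are $\Omega Y^H$, which are $(s-1)$-connected.
\item $PY$ is $G$-contractible, so Proposition~\ref{prop: secat lb by cat} gives $\cat_G(f)\le\sct_{G,f}(PY\to Y)<(\mathrm{hdim}_G(X)+1)/(s+1)$. Equivalently, $(s+1)\cat_G(f)\le\mathrm{hdim}_G(X)$.
\item Combining, $(s+1)\TC^{Sc}_G(f)\le 2\,\mathrm{hdim}_G(X)<2\,\mathrm{hdim}_G(X)+1$, which is the claimed inequality.
\end{enumerate}
In this argument only $X$ itself needs a $G$-CW structure.
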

\begin{proof}
The proof follows from \cref{prop: secat ub dim-conn} and Theorem~\ref{prop: secat lb by coh}.
\end{proof}
\begin{proposition}
    Suppose $f,g\colon X\to Y$ are $G$-maps with $X\times X$ is completely normal. Then 
    \[
    \TC_G^{Sc}(f\times g)\leq \TC_G^{Sc}(f)+\TC_G^{Sc}(g).
    \]
\end{proposition}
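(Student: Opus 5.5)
The plan is to reduce the statement to the product inequality for the equivariant relative sectional category, Proposition~\ref{prop: secat prod ineq}. We read $f\colon X\to Y$ and $g\colon X'\to Y'$ as $G$-maps, so that $f\times g\colon X\times X'\to Y\times Y'$ carries the diagonal action; the hypothesis of complete normality is what Proposition~\ref{prop: secat prod ineq} needs. By definition,
\[
\TC_G^{Sc}(f\times g)=\sct_{G,(f\times g)\times(f\times g)}(\pi_{Y\times Y'}),
\]
where $\pi_{Y\times Y'}\colon (Y\times Y')^I\to (Y\times Y')\times(Y\times Y')$ is the endpoint map.

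The first step is to identify this with a product situation. Let
\[
\tau\colon (Y\times Y')\times(Y\times Y')\to (Y\times Y)\times(Y'\times Y')
\]
be the shuffle homeomorphism, and define $\tau_X\colon (X\times X')\times(X\times X')\to (X\times X)\times(X'\times X')$ in the same way. Both are $G$-equivariant for the diagonal actions. The canonical $G$-homeomorphism $(Y\times Y')^I\cong Y^I\times Y'^I$ intertwines $\pi_{Y\times Y'}$ with $\pi_Y\times\pi_{Y'}$ via $\tau$. Likewise, $\tau\circ\big((f\times g)\times(f\times g)\big)=\big((f\times f)\times(g\times g)\big)\circ\tau_X$.

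Next we invoke invariance of $\sct_{G,f}(p)$ under $G$-homotopy equivalences of the data. This is the first proposition of the ``Behavior'' subsection; here we only need it for homeomorphisms, where it is immediate, since open covers and local sections transport along $\tau_X$, $\tau$ and the path-space homeomorphism. This gives
\[
\TC_G^{Sc}(f\times g)=\sct_{G,(f\times f)\times(g\times g)}(\pi_Y\times\pi_{Y'}).
\]
Proposition~\ref{prop: secat prod ineq}, applied with $K_1=X\times X$, $K_2=X'\times X'$, $p_1=\pi_Y$ and $p_2=\pi_{Y'}$, then yields
\[
\TC_G^{Sc}(f\times g)\leq \sct_{G,f\times f}(\pi_Y)+\sct_{G,g\times g}(\pi_{Y'})=\TC_G^{Sc}(f)+\TC_G^{Sc}(g).
\]

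The main obstacle is the point-set hypothesis. Proposition~\ref{prop: secat prod ineq} asks for $K_1$ and $K_2$ to be completely normal, and its proof goes through Proposition~\ref{sub}, which actually needs the product $K_1\times K_2$ to be completely normal. That product is homeomorphic to $(X\times X')\times(X\times X')$. I would therefore interpret the hypothesis as guaranteeing complete normality of the relevant squares and products, and if needed add this explicitly. In the special case $X=X'$, which matches the literal wording of the statement, I would check that the assumption on $X\times X$ suffices as used in Proposition~\ref{sub}. Proposition~\ref{sub} also requires $G$-fibrations; this is harmless, since $\pi_Y$ and $\pi_{Y'}$ are $G$-fibrations and Proposition~\ref{prop: rel secat=secat hpb} lets us work with the pullbacks $(f\times f)^\ast\pi_Y$ and $(g\times g)^\ast\pi_{Y'}$, which are again $G$-fibrations.
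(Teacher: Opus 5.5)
Your proof is correct and is the paper's argument made explicit. The paper simply says the statement follows from Proposition~\ref{prop: secat prod ineq}, and your shuffle homeomorphisms, together with $(Y\times Y')^I\cong Y^I\times Y'^I$, supply precisely the identification $\TC_G^{Sc}(f\times g)=\sct_{G,(f\times f)\times(g\times g)}(\pi_Y\times\pi_{Y'})$ that this citation tacitly uses.

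Your point-set caveat is well founded: Proposition~\ref{sub} really needs the fourfold product $(X\times X)\times(X'\times X')$ to be completely normal. That mismatch is inherited from Proposition~\ref{prop: secat prod ineq} as stated and affects the paper's one-line proof equally. So if you take that proposition as a black box, your hypothesis on $X\times X$ is exactly what it requires.
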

\begin{proof}
    This follows from Proposition \ref{prop: secat prod ineq}.
\end{proof}
We now examine how the equivariant LS-category of a map provides bounds for its topological complexity.
\begin{proposition}\label{prop: scott cat ub lb}
  Let $f\colon X\to Y$ be a $G$-map.
  \begin{enumerate}
      \item If $H$ is a stabilizer of some point in $X$, then  $\cat_H(f)\leq \TC_G^{Sc}(f).$
      \item If $Y$ is $G$-connected, then $\TC_G^{Sc}(f)\leq \cat_G(f\times f).$
  \end{enumerate}
\end{proposition}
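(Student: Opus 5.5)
Part (2) follows from general results already in the paper. By definition, $\TC_G^{Sc}(f)=\sct_{G,f\times f}(\pi_Y)$, where $\pi_Y\colon Y^I\to Y\times Y$ is the free path $G$-fibration. I will apply Proposition~\ref{prop: secat ub by cat} with $p=\pi_Y$, $E=Y^I$, $B=Y\times Y$ and $K=X\times X$, using hypothesis (1): $p(E^H)=B^H$ for every closed subgroup $H$. Since the action on $Y^I$ is pointwise, $(Y^I)^H=(Y^H)^I$ and $(Y\times Y)^H=Y^H\times Y^H$. $G$-connectedness of $Y$ says each $Y^H$ is path-connected, so every pair $(a,b)\in Y^H\times Y^H$ is joined by a path lying in $Y^H$. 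That path is an $H$-fixed element of $Y^I$ mapping to $(a,b)$, which gives surjectivity. Proposition~\ref{prop: secat ub by cat} then yields $\TC_G^{Sc}(f)\le \cat_G(f\times f)$ directly.

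Part (1) I would prove directly from the definitions, following the classical argument $\cat(X)\le\TC(X)$. Let $x_0\in X$ with $G_{x_0}=H$, and let $j\colon X\to X\times X$, $j(x)=(x_0,x)$. This map is $H$-equivariant, because $H$ fixes $x_0$. Take a $G$-invariant open cover $\{U_0,\dots,U_n\}$ of $X\times X$ with $G$-maps $s_i\colon U_i\to Y^I$ and $G$-homotopies $\pi_Y s_i\simeq_G (f\times f)|_{U_i}$. Then $V_i=j^{-1}(U_i)$ is an $H$-invariant open cover of $X$. On $V_i$, the map $\gamma=s_i\circ j$ is $H$-equivariant and satisfies $(\gamma(x)(0),\gamma(x)(1))\simeq_H (f(x_0),f(x))$ by restricting the $G$-homotopy along $j$.

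The key step is to produce from this an $H$-homotopy from $f|_{V_i}$ to the constant map at $f(x_0)$. Note that $f(x_0)\in Y^H$, since $G_{x_0}\subseteq G_{f(x_0)}$, so this constant map is $H$-equivariant and lands in a single orbit. The homotopy $\pi_Y s_i j\simeq_H (c_{f(x_0)},f|_{V_i})$ gives an $H$-homotopy $\Phi=(\Phi_0,\Phi_1)$. The path along $x$ obtained by concatenating $\Phi_1(x,\cdot)$ reversed, then $\gamma(x)$ reversed, then $\Phi_0(x,\cdot)$ runs from $f(x)$ to $f(x_0)$. The endpoint is exactly $f(x_0)$ because $\Phi_0$ ends at the constant $c_{f(x_0)}$. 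This path depends continuously and $H$-equivariantly on $x$. Contracting along it, $(x,t)\mapsto$ (the point at parameter $t$), gives an $H$-homotopy from $f|_{V_i}$ to $c_{f(x_0)}$. Hence $\cat_H(f)\le n$.

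The main obstacle is the bookkeeping in part (1): $\pi_Y s_i$ agrees with $f\times f$ only up to homotopy, so the path must be padded with the homotopy tracks. One also has to check that every piece is genuinely $H$-equivariant. This works because $H$ acts trivially on the time parameter and fixes the basepoint value $f(x_0)$.
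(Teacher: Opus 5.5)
Your proposal is correct and follows the paper's proof. Part (2) is exactly the paper's application of Proposition~\ref{prop: secat ub by cat}(1) via $(Y^I)^H=(Y^H)^I$ and the path-connectedness of each $Y^H$. Part (1) restricts along the same $H$-map $j(x)=(x_0,x)$ and differs only in presentation: the paper lifts to the based path space $PY=\{\gamma\in Y^I\mid\gamma(0)=f(x_0)\}$ (tacitly using a strict section) and invokes its $H$-contractibility through Proposition~\ref{prop: secat lb by cat}, whereas you build the path and the contraction by hand, and your padding by the homotopy tracks properly handles the homotopy-commutative case.
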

\begin{proof}
    The second inequality follows from part (1) of Proposition \ref{prop: secat ub by cat}, since the $G$-map $\pi_Y$ satisfies $\pi_Y((Y^I)^H)=\pi_Y((Y^H)^I)=Y^H\times Y^H=(Y\times Y)^H$ for all closed subgroups $H\leq G$.

    Let $U\subseteq X\times X$ be an open subset equipped with a local section $s\colon  U \to Y^I$ such that $\pi_Ys=(f\times f) \iota$, where $\iota\colon  U \hookrightarrow X \times X$ is the inclusion.
    
    Let $H=G_z$ for some $z\in X$. We can construct two $H$-maps, $j\colon X\to X\times X$ and $j'\colon Y \to Y \times Y$, defined by $j(x)=(z,x)$ and $j'(y)=(f(z),y)$, respectively.
    Let $p$ be the $H$-equivariant pullback of $\pi_Y$ along $j'$. Here, $PY=\{\gamma\in Y^I\mid \gamma(0)=f(z)\}$ and $p(\gamma)=\gamma(1)$.
    We consider the following commutative diagram: 
\[
\begin{tikzcd}[row sep=2.8em, column sep=2.8em]
	{\color{blue} j^{-1}(U)} 
		&& {\color{red!80!black} U} \\
	& {\color{blue} PY} 
		&& {\color{red!80!black} Y^I} \\
	{\color{blue} X} 
		&& {\color{red!80!black} X\times X} \\
	& {\color{blue} Y} 
		&& {\color{red!80!black} Y\times Y}	
	\arrow["{\iota|_{j^{-1}(U)}}"', hook, blue, from=1-1, to=3-1]
	\arrow[dashed, blue, from=1-1, to=2-2]
	\arrow["p"', blue, from=2-2, to=4-2]
	\arrow["f"', blue, from=3-1, to=4-2]	
	\arrow["\iota", hook, red!80!black, from=1-3, to=3-3]
	\arrow["s", red!80!black, from=1-3, to=2-4]
	\arrow["{\pi_Y}", red!80!black, from=2-4, to=4-4]
	\arrow["{f\times f}"', red!80!black, from=3-3, to=4-4]	
	\arrow["{j|_{j^{-1}(U)}}", black!70, from=1-1, to=1-3]
	\arrow[black!70, from=2-2, to=2-4]
	\arrow["j"', black!70, from=3-1, to=3-3]
	\arrow["{j'}"', black!70, from=4-2, to=4-4]
\end{tikzcd}
\]
    Since $\pi_Y(sj|_{j^{-1}(U)})=(f\times f) \iota j|_{j^{-1}(U)}=(f\times f) j|_{j^{-1}(U)}=j'f\iota|_{j^{-1}(U)}$, the universal property of the pullback $p\colon PY \to Y$ gives the existence of a map $j^{-1}(U)\to PY$ satisfying the property $pj|_{j^{-1}(U)}=f|_U.$. Thus, $\gsc{H,f}{j}\leq \TC_G^{Sc}(f)$. As $PY$ is $H$-contractible, we have $\cat_H(f)\leq\gsc{H,f}{j}$ by  Proposition \ref{prop: secat lb by cat}. Combining the last two inequality, we get  $\cat_H(f)\leq\TC_H^{Sc}(f).$
\end{proof}
\begin{corollary}\label{cor: cat TC relation_contractibility}
   Let $X$ and $Y$ be $G$-spaces such that $X^G\neq\emptyset$, $X\times X$ is completely normal, and $Y$ is $G$-connected. Let $f\colon X\to Y$ be a $G$-map. Then:
    \begin{enumerate}
        \item $\cat_G(f)\leq \TC_G^{Sc}(f)\leq 2 \cat_G(f)$.
        \item $f$ is $G$-contractible if and only if $\TC_G^{Sc}(f)=0$.
    \end{enumerate}
\end{corollary}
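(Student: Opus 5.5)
For the lower bound in part (1) I will apply Proposition~\ref{prop: scott cat ub lb}(1) with $H=G$. Since $X^G\neq\emptyset$, pick $z\in X^G$; then $G_z=G$ and that proposition gives $\cat_G(f)\leq \TC_G^{Sc}(f)$ directly. It is worth checking that its argument really produces a $G$-equivariant statement here. Because $z$ is fixed, the maps $j(x)=(z,x)$ and $j'(y)=(f(z),y)$ are $G$-maps, and $f(z)\in Y^G$. The based path space $PY$ is therefore $G$-contractible onto the constant path at $f(z)$. Proposition~\ref{prop: secat lb by cat} then applies to $p\colon PY\to Y$ relative to $f$.

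For the upper bound $\TC_G^{Sc}(f)\leq 2\cat_G(f)$ I will combine Proposition~\ref{prop: scott cat ub lb}(2) with the product inequality. Since $Y$ is $G$-connected, Proposition~\ref{prop: scott cat ub lb}(2) gives $\TC_G^{Sc}(f)\leq \cat_G(f\times f)$. Next I apply Proposition~\ref{porp: cat prod ineq} with $f'=f$, which yields $\cat_G(f\times f)\leq 2\cat_G(f)$. Its hypotheses are met: $Y$ is $G$-connected, $X\times X$ is completely normal, and $Y^G\ni f(z)$ is nonempty because $z\in X^G$. The step I expect to need the most care is this nonemptiness of $Y^G$, which that proposition requires and which is not assumed outright. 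It is exactly where $X^G\neq\emptyset$ is used a second time.

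For part (2), suppose first that $f$ is $G$-contractible, meaning $f$ is $G$-homotopic to a map into a single orbit, so $\cat_G(f)=0$. Part (1) then gives $\TC_G^{Sc}(f)\leq 0$. Conversely, if $\TC_G^{Sc}(f)=0$, part (1) gives $\cat_G(f)\leq 0$, so the single open set $X$ works and $f$ is $G$-homotopic to a map with image in one orbit. That is precisely $G$-contractibility of $f$. If one instead wants $f$ $G$-homotopic to a constant map at a fixed point, I would use Lemma~\ref{lem: conservation of isotropy}, as in Case~(2) of Proposition~\ref{prop: secat ub by cat}, to deform the orbit into $f(z)\in Y^G$. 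This uses $G$-connectedness of $Y$.
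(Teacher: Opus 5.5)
Your proposal is correct and follows the paper's proof. The lower bound comes from Proposition~\ref{prop: scott cat ub lb}(1) with $H=G_z=G$ for a fixed point $z\in X^G$, the upper bound from Proposition~\ref{prop: scott cat ub lb}(2) combined with Proposition~\ref{porp: cat prod ineq} for $f'=f$, and part (2) from $G$-contractibility of $f$ meaning $\cat_G(f)=0$; you are more explicit than the paper's one-line proof, which leaves implicit both the choice $H=G$ and the fact that $f(z)\in Y^G$ gives the nonempty fixed-point set the product inequality requires.
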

\begin{proof}
    The proof of the first part is immediate from Proposition \ref{prop: scott cat ub lb} and Proposition \ref{porp: cat prod ineq}. Since $G$-contractibility is defined by a vanishing $G$-category, the second part immediately follows from the first.    
\end{proof}
The equivariant topological complexity decreases under the composition of maps.
\begin{proposition}\label{prop: scotts tc compositions ineq}
  Let $f\colon X\to Y$ and $g\colon Y\to Z$ be $G$-maps. Then
  \[
  \TC_G^{Sc}(g\circ f)\leq \mathrm{min}\{\TC_G^{Sc}(f), \TC_G^{Sc}(g)\}.
  \]
\end{proposition}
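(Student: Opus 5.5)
We need to show $\TC_G^{Sc}(g\circ f)\le \TC_G^{Sc}(f)$ and $\TC_G^{Sc}(g\circ f)\le \TC_G^{Sc}(g)$. Both follow by writing $\TC_G^{Sc}$ as a relative sectional category, $\TC_G^{Sc}(h)=\sct_{G,h\times h}(\pi)$, and applying the change-of-domain and change-of-codomain results of Section~\ref{sec: eq rel secat}.

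\textbf{The bound by $\TC_G^{Sc}(g)$.} Note that $(g\circ f)\times(g\circ f)=(g\times g)\circ(f\times f)$, and $f\times f\colon X\times X\to Y\times Y$ is a $G$-map for the diagonal actions. Applying the ``in particular'' part of Proposition~\ref{prop: secat_ff' p leq secat_f p} with $f'=f\times f$, the map $g\times g$ in the role of $f$, and $p=\pi_Z$, we get
\[
\TC_G^{Sc}(g\circ f)=\sct_{G,(g\times g)\circ(f\times f)}(\pi_Z)\le \sct_{G,g\times g}(\pi_Z)=\TC_G^{Sc}(g).
\]
Concretely, this pulls back a cover of $Y\times Y$ along $f\times f$. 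No further work is needed here.

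\textbf{The bound by $\TC_G^{Sc}(f)$.} Consider the $G$-map $g^I\colon Y^I\to Z^I$, $\gamma\mapsto g\circ\gamma$. It is equivariant because $g$ is, and it satisfies $\pi_Z\circ g^I=(g\times g)\circ\pi_Y$ strictly. Take a $G$-invariant open cover $\{U_i\}$ of $X\times X$ with $G$-maps $s_i\colon U_i\to Y^I$ such that $\pi_Y s_i\simeq_G (f\times f)|_{U_i}$. Set $s_i'=g^I\circ s_i$. Then
\[
\pi_Z s_i'=(g\times g)\pi_Y s_i\simeq_G (g\times g)(f\times f)|_{U_i}=((g f)\times(gf))|_{U_i},
\]
where the $G$-homotopy is obtained by postcomposing the given one with $g\times g$. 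This yields $\TC_G^{Sc}(g\circ f)\le\TC_G^{Sc}(f)$. Equivalently, one can invoke the second change-of-codomain proposition of Section~\ref{sec: eq rel secat}, with $\beta=g\times g$, $\gamma=g^I$, $K=X\times X$, $f\mapsto f\times f$ and $f'\mapsto (gf)\times(gf)$. This mirrors the argument in Proposition~\ref{prop: scotts tc ineq tcX tcY}.

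\textbf{Main obstacle.} The only point needing care is checking equivariance and the homotopy commutativity, namely that postcomposing a $G$-homotopy with the $G$-map $g\times g$ is again a $G$-homotopy. This is immediate. Combining the two inequalities gives the minimum.
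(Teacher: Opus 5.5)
Your proposal is correct and matches the paper's proof: the bound by $\TC_G^{Sc}(g)$ comes from the ``in particular'' change-of-domain inequality applied to $(g\times g)\circ(f\times f)$, and the bound by $\TC_G^{Sc}(f)$ comes from postcomposing the local sections with $g^I$, using $\pi_Z\circ g^I=(g\times g)\circ\pi_Y$. Your observation that the second bound is also an instance of the change-of-codomain proposition, with $\beta=g\times g$ and $\gamma=g^I$, is a valid repackaging of the same argument.
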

\begin{proof}
We note that $ \TC_G^{Sc}(g\circ f)\leq \TC_G^{Sc}(g)$ follows from Proposition \ref{prop: secat_ff' p leq secat_f p}.
Let $U\subseteq X\times X$ admits a $G$-section of $\pi_Y$ relative to the map $f\times f$; that is, $\pi_Y\circ s\simeq_G (f\times f)|_U.$ Let $g^I\colon Y^I\to Z^I$ be a $G$-map given by $g^I(\gamma)(t)=g(\gamma(t))$, where $\gamma\in Y^I$ and $t\in I$.
From the commutative diagram,
\[\begin{tikzcd}
	&& {Y^I} & {Z^I} \\
	U & {X\times X} & {Y\times Y} & {Z\times Z}
	\arrow["{g^I}", from=1-3, to=1-4]
	\arrow["{\pi_Y}"', from=1-3, to=2-3]
	\arrow["{\pi_Z}", from=1-4, to=2-4]
	\arrow["s", dashed, from=2-1, to=1-3]
	\arrow[hook, from=2-1, to=2-2]
	\arrow["{f \times f}"', from=2-2, to=2-3]
	\arrow["{g \times g}"', from=2-3, to=2-4]
\end{tikzcd}\]
it follows that 
\begin{align*}
    \pi_Z\circ g^I \circ s &=(g \times g)\circ \pi_Y\circ s\\
    &\simeq_G (g \times g)\circ (f \times f)| _U \\
    &= (g\circ f) \times (g \circ f)| _U.
\end{align*}
Therefore,  \[
  \TC_G^{Sc}(g\circ f)\leq \TC_G^{Sc}(f).
  \] 
  This completes the proof.
\end{proof}
\begin{proposition}Let $f\colon X\to Y$ be a $G$-map.
    \begin{enumerate}
        \item If $f$ admits a right $G$-homotopy inverse, then $\TC_G^{Sc}(f)=\TC_G(Y)$.
        \item If $f$ admits a left $G$-homotopy inverse, then $\TC_G^{Sc}(f)=\TC_G(X)$.
        \item If $f$  $G$-homotopy equivalence, then $\TC_G^{Sc}(f)=\TC_G(Y)=\TC_G(X)$.
    \end{enumerate}
\end{proposition}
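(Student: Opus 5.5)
The plan is to get each of (1) and (2) from two opposite inequalities. In each case one inequality is the general bound of Proposition~\ref{prop: scotts tc ineq tcX tcY}, and the other comes from composition or pre-composition with the homotopy inverse, together with $G$-homotopy invariance of $\TC^{Sc}_G$. Part (3) will then follow by combining (1) and (2).

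\textbf{Part (1).} Let $g\colon Y\to X$ be a $G$-map with $f\circ g\simeq_G \mathrm{id}_Y$. By Proposition~\ref{prop: scotts tc ineq tcX tcY} we already have $\TC_G^{Sc}(f)\le \TC_G(Y)$. For the reverse inequality, apply the ``in particular'' clause of Proposition~\ref{prop: secat_ff' p leq secat_f p} with $p=\pi_Y$, $f'=g\times g$, and $K=X\times X\xrightarrow{f\times f}Y\times Y$. This gives
\[
\sct_{G,(f\times f)\circ(g\times g)}(\pi_Y)\le \sct_{G,f\times f}(\pi_Y)=\TC^{Sc}_G(f).
\]
Since $(f\times f)\circ(g\times g)=(fg)\times(fg)\simeq_G \mathrm{id}_{Y\times Y}$, the left-hand side equals $\sct_{G,\mathrm{id}}(\pi_Y)=\sct_G(\pi_Y)=\TC_G(Y)$. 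Here we use the invariance statement (the proposition about $G$-homotopy commutative diagrams with $\alpha,\beta,\gamma$ identities), or directly the fact that relative sectional category depends only on the $G$-homotopy class of $f$.

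\textbf{Part (2).} Let $r\colon Y\to X$ satisfy $r\circ f\simeq_G\mathrm{id}_X$. Again $\TC_G^{Sc}(f)\le\TC_G(X)$ holds by Proposition~\ref{prop: scotts tc ineq tcX tcY}. For the reverse inequality, we use the composition inequality, Proposition~\ref{prop: scotts tc compositions ineq}:
\[
\TC^{Sc}_G(r\circ f)\le \TC^{Sc}_G(f).
\]
By the homotopy-invariance proposition for $\TC^{Sc}_G$ proved earlier, $\TC^{Sc}_G(r\circ f)=\TC^{Sc}_G(\mathrm{id}_X)$, and by the earlier remark this equals $\TC_G(X)$.

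\textbf{Part (3).} A $G$-homotopy equivalence has both a left and a right $G$-homotopy inverse, so parts (1) and (2) give both equalities.

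\textbf{Main obstacle.} The only subtle step is the homotopy invariance in the $f$-variable used in (1): we need $\sct_{G,h}(p)=\sct_{G,h'}(p)$ whenever $h\simeq_G h'$. This is immediate from the definition. If $p\circ s_i\simeq_G h|_{U_i}$ and $h\simeq_G h'$, then $p\circ s_i\simeq_G h'|_{U_i}$, so the same cover and the same sections work. I would state this explicitly rather than rely on the omitted proofs.
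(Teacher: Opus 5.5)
Your proof is correct and follows the same scheme as the paper. Each of (1) and (2) is sandwiched between the upper bound of Proposition~\ref{prop: scotts tc ineq tcX tcY} and the composition inequality of Proposition~\ref{prop: scotts tc compositions ineq}, together with $G$-homotopy invariance; the ``outer map'' half of that composition inequality is exactly the special case of Proposition~\ref{prop: secat_ff' p leq secat_f p} that you invoke in (1).

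In (2) you correctly use the ``inner map'' half, $\TC^{Sc}_G(r\circ f)\le \TC^{Sc}_G(f)$. The paper's displayed chain $\TC_G(X)=\TC_G^{Sc}(g\circ f)\leq \TC_G^{Sc}(g)\leq \TC_G(X)$ instead passes through $\TC^{Sc}_G(g)$. It therefore only yields $\TC^{Sc}_G(g)=\TC_G(X)$, so your version is the one that actually proves the stated equality.
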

\begin{proof}
(1) Since $f$ admits a right $G$-homotopy inverse, there exists a  $G$-map $g\colon  Y\to X$ such that $f\circ g\simeq_G \mathrm{id}_Y$.
Then using Proposition~\ref{prop: scotts tc ineq tcX tcY} and Proposition \ref{prop: scotts tc compositions ineq} we obtain the following inequality $$\TC_G(Y)=\TC_G^{Sc}(f\circ g)\leq 
\TC_G^{Sc}(f)\leq \TC_G(Y).$$

(2) Suppose $g\colon Y\to X$ is the left  $G$-homotopy inverse of $f$. That is, $g\circ f\simeq_G \mathrm{id}_X$ Then again using Proposition~\ref{prop: scotts tc ineq tcX tcY} and Proposition~\ref{prop: scotts tc compositions ineq} we have the following inequality: $$\TC_G(X)=\TC_G^{Sc}(g\circ f)\leq 
\TC_G^{Sc}(g)\leq \TC_G(X).$$

(3) This follows from parts (1) and (2).
\end{proof}
\begin{proposition}
Suppose we have a following commutative diagram of $G$-maps such that 
\begin{enumerate}
\item $h$ admits a right $G$-homotopy inverse: 
$$\xymatrix{
X \ar[rr]^{h} \ar[dr]_{f} & & {X'} \ar[dl]^{g}  \\
 & {Y}.   
}$$ 
 Then $\TC_G^{Sc}(f)=\TC_G^{Sc}(g)$.  

 \item  $h$ admits a left $G$-homotopy inverse:
$$\xymatrix{
&X \ar[dr]^{f} \ar[dl]_{g}   \\
Y \ar[rr]^{h}  & & {Y'}.  
}$$ 
 Then $\TC_G^{Sc}(f)=\TC_G^{Sc}(g)$.
 \end{enumerate}
\end{proposition}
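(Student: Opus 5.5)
Both parts reduce to the composition inequality of Proposition~\ref{prop: scotts tc compositions ineq}, together with the $G$-homotopy invariance of $\TC_G^{Sc}$ under $G$-homotopic maps proved earlier in this subsection. In each case I will trap one invariant between two copies of the other.

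For part (1) we have $f=g\circ h$ on the nose. Proposition~\ref{prop: scotts tc compositions ineq} gives at once
\[
\TC_G^{Sc}(f)=\TC_G^{Sc}(g\circ h)\leq \TC_G^{Sc}(g).
\]
For the reverse inequality, let $k\colon X'\to X$ be a right $G$-homotopy inverse of $h$, so $h\circ k\simeq_G \mathrm{id}_{X'}$. Then
\[
f\circ k = g\circ h\circ k \simeq_G g .
\]
By homotopy invariance, $\TC_G^{Sc}(g)=\TC_G^{Sc}(f\circ k)$, and the composition inequality bounds this by $\TC_G^{Sc}(f)$. This gives equality.

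For part (2) the diagram reads $f=h\circ g$. Again Proposition~\ref{prop: scotts tc compositions ineq} gives $\TC_G^{Sc}(f)\leq \TC_G^{Sc}(g)$. Now let $k\colon Y'\to Y$ satisfy $k\circ h\simeq_G \mathrm{id}_Y$. Then
\[
k\circ f = k\circ h\circ g \simeq_G g .
\]
Hence $\TC_G^{Sc}(g)=\TC_G^{Sc}(k\circ f)\leq \TC_G^{Sc}(f)$, using the other half of the composition inequality (the one bounding by the first map). An alternative route is the codomain-change results of Section~\ref{sec: eq rel secat}, but the argument above is more direct.

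There is no serious obstacle. The one point to check is that the homotopy invariance proposition applies to $G$-homotopic maps with the same source and target, which is exactly our situation in both cases: $f\circ k,g\colon X'\to Y$ in part (1) and $k\circ f,g\colon X\to Y$ in part (2). If one wants the argument to be self-contained, that invariance is just the observation that $(f\times f)|_U\simeq_G (g\times g)|_U$, so a local section relative to one map is also a local section relative to the other.
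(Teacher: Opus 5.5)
Your proof is correct and follows the paper's argument. Part (1) is exactly the paper's proof: it bounds $\TC_G^{Sc}(f)$ above and below using the composition inequality and the right $G$-homotopy inverse, and you additionally make the $G$-homotopy invariance of $\TC_G^{Sc}$ explicit. For part (2), which the paper only calls ``similar'', your dual argument using $k\circ f\simeq_G g$ supplies the intended details.
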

\begin{proof}
(1) From Proposition~\ref{prop: scotts tc compositions ineq}, we have $\TC_G^{Sc}(f)=\TC_G^{Sc}(g\circ h)\leq \TC_G^{Sc}(g)$. 
Furthermore, since $h$ admits a right $G$-homotopy inverse, there is a $G$-map $h'\colon X'\to X$ such that $h\circ h'\simeq_G \mathrm{id}_{X'}$. Therefore, $\TC_G^{Sc}(g)=\TC_G^{Sc}(f\circ h')\leq \TC_G^{Sc}(f)$.

(2) The proof is similar to part (1).
\end{proof}
The following theorem generalizes \cite[Theorem 5.11]{Colman_Grant_Eqtc}. In contrast to their proof, we cannot apply \cref{prop: free action cat/G}, since the quotient $G$-space $G/H$ is free only when $H=e$. We therefore adopt a direct argument. As a consequence, our proof does not require the metrizability assumption on the group $G$.
\begin{theorem}
    Let $G$ be a connected topological group, and let $H$ be a closed normal subgroup of $G$. Assume $G$ acts on itself and on $G/H$ by left translation. Then, for the quotient map $f\colon  G \to G/H$, we have
    $$ \TC_{G}^{Sc}(f) = \cat(f)=\TC^{Sc}(f).$$
\end{theorem}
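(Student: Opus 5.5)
The plan is to prove the chain of inequalities
$$\cat(f)\leq \TC^{Sc}(f)\leq \TC_G^{Sc}(f)\leq \cat(f),$$
which forces all three numbers to be equal. Throughout I use that $G$ acts freely on itself by left translation, so every point of $G$ has trivial stabilizer.

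\textbf{First inequality.} Apply part (1) of Proposition~\ref{prop: scott cat ub lb} with the trivial group acting. The stabilizer of any point is then trivial, and that proposition gives $\cat(f)\leq \TC^{Sc}(f)$.

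\textbf{Second inequality.} Apply part (2) of Corollary~\ref{cor: restricted Scott TC} with $H'=\{e\}$. Since $\TC_{\{e\}}^{Sc}(f)=\TC^{Sc}(f)$, this gives $\TC^{Sc}(f)\leq \TC_G^{Sc}(f)$.

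The first two steps are routine. All the content lies in the remaining inequality $\TC_G^{Sc}(f)\leq \cat(f)$.

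\textbf{Third inequality, setup.} Let $\{U_0,\dots,U_n\}$ be an open cover of $G$ realizing $\cat(f)=n$. For each $i$ there is a homotopy $F_i\colon U_i\times I\to G/H$ from $f|_{U_i}$ to a constant map at some point $c_i$. Since $G$ is connected, $G/H$ is path-connected, so I fix a path $\delta_i$ in $G/H$ from $eH$ to $c_i$. For $u\in U_i$, define a path $\gamma_i(u)$ in $G/H$ by first running along $\delta_i$ and then along the reverse of $t\mapsto F_i(u,t)$. Thus $\gamma_i(u)$ goes from $eH$ to $uH$, and $u\mapsto \gamma_i(u)$ is continuous into $(G/H)^I$ by the exponential law.

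\textbf{Third inequality, the cover and sections.} Consider the division map $\mu\colon G\times G\to G$, $\mu(a,b)=a^{-1}b$. It is continuous and invariant under the diagonal action, since $(ga)^{-1}(gb)=a^{-1}b$. Set $W_i=\mu^{-1}(U_i)$. These are $G$-invariant open subsets of $G\times G$, and they cover $G\times G$ because the $U_i$ cover $G$. On $W_i$ define
$$s_i(a,b)(t)=a\cdot \gamma_i(a^{-1}b)(t).$$
This map is continuous, being built from group operations and the left action of $G$ on $G/H$. It is $G$-equivariant, because $s_i(ga,gb)=ga\cdot\gamma_i(a^{-1}b)=g\cdot s_i(a,b)$. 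Its endpoints are $s_i(a,b)(0)=a\cdot eH=aH=f(a)$ and $s_i(a,b)(1)=a\cdot a^{-1}bH=bH=f(b)$. Hence $\pi_{G/H}\circ s_i=(f\times f)|_{W_i}$ holds exactly, not just up to $G$-homotopy, and therefore $\TC_G^{Sc}(f)\leq n=\cat(f)$.

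\textbf{Expected obstacle.} The main point to verify carefully is the third step, specifically that $\gamma_i$ depends continuously on $u$ and that the translated formula is genuinely equivariant and well defined. Continuity of $\gamma_i$ follows by writing it as a concatenation of two continuous families, one of which is constant in $u$. Equivariance holds because the action of $G$ on $G/H$ is by left translation, and well-definedness needs no condition beyond $f$ being the quotient $G$-map. Normality of $H$ does not appear to be needed in this argument; it only ensures $G/H$ is a group, which is not used. No metrizability or Palais-type lifting is required, in line with the remark preceding the theorem.
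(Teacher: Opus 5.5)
Your argument is correct. It rests on the same device as the paper: pull the categorical cover back along the invariant map $(a,b)\mapsto a^{-1}b$, then translate by the first coordinate. The organization differs, though.

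\emph{The paper's route.} The paper proves $\cat(f)\le \TC_G^{Sc}(f)\le \cat_G(f\times f)\le \cat(f)$. The translated homotopies $((x,y),t)\mapsto (xH,\,x\cdot F_i(x^{-1}y,t))$ deform $(f\times f)|_{U_i}$ into the orbit of $(eH,z_iH)$. Proposition~\ref{prop: scott cat ub lb}(2) then converts this into sections, and that step requires $G/H$ to be $G$-connected.

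\emph{Your route.} You build strict equivariant sections directly by prepending a fixed path $\delta_i$ from $eH$ to $c_i$. This is exactly the path that Proposition~\ref{prop: secat ub by cat}(1) supplies abstractly in the paper's argument.

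\emph{What each buys.} The paper's detour yields the extra equality $\cat_G(f\times f)=\cat(f)$. Yours avoids any fixed-point analysis. In the paper, $G$-connectedness holds because $(G/H)^K$ is either $G/H$ or empty, and that is where normality of $H$ is needed. So your remark that normality is unnecessary for your construction is right. Your chain also passes through $\TC^{Sc}(f)$, so it establishes the three-way equality as stated; the paper's written chain never mentions $\TC^{Sc}(f)$.

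One justification needs repair, and the paper tacitly relies on the same point. A connected topological group need not be path-connected (solenoids are examples). So ``$G$ connected implies $G/H$ path-connected'' is not valid in general. The fix goes as follows.
\begin{itemize}
\item It suffices that each $c_i$ lies in the path component of $eH$.
\item The homotopy $F_i$ puts $f(U_i)$ inside the path component of $c_i$. Hence the preimage under $f$ of each path component of $G/H$ is a union of $U_i$'s, and so is open.
\item These preimages partition $G$, so connectedness of $G$ forces $G/H$ to be path-connected whenever $\cat(f)<\infty$.
\item If $\cat(f)=\infty$, your first two inequalities already make all three invariants infinite.
\end{itemize}
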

\begin{proof}
     Since $G$ acts freely on $G$, the trivial subgroup $\{e\}$ is a stabilizer of every point of $G$. Hence, we have the lower bound $\TC_{G}^{Sc}(f) \geq \cat(f)$ by Proposition~\ref{prop: scott cat ub lb}(1).
    
    Since $G/H$ is $G$-connected, we have $\TC_{G}^{Sc}(f) \leq \cat_G(f \times f)$  by Proposition~\ref{prop: scott cat ub lb}(2). We will prove directly that $\cat_G(f \times f) \leq \cat(f)$ by explicitly constructing the required equivariant homotopies.

    Let $\cat(f) = n$. Then there exists an open cover $\{V_0, \dots, V_n\}$ of $G$ and homotopies $F_i\colon  V_i \times I \to G/H$ such that $F_i(-, 0) = f|_{V_i}$ and $F_i(-, 1)$ is a constant map sending $V_i$ to some point $z_i H \in G/H$. 

    We define a map $\alpha\colon  G \times G \to G$ by $\alpha(x, y) = x^{-1}y$. We note that $\alpha$ is constant on $G$-orbits because $\alpha(gx, gy) = (gx)^{-1}(gy) = x^{-1}y$. 
    For each $i$, define $$U_i = \alpha^{-1}(V_i) = \{ (x, y) \in G \times G \mid x^{-1}y \in V_i \}.$$ Because $\alpha$ is constant on orbits, each $U_i$ is a $G$-invariant open subset of $G \times G$, and $\{U_i\}_{i=0}^n$ forms a $G$-invariant open cover of $G \times G$.

    For each $U_i$, we construct a $G$-equivariant homotopy $\tilde{F}_i\colon  U_i \times I \to G/H \times G/H$ by setting:
    $$ \tilde{F}_i((x, y), t) = \left( xH, \, x \cdot F_i(x^{-1}y, t) \right). $$
    
    We must verify three properties of $\tilde{F}_i$:
    \begin{enumerate}
        \item \textbf{Equivariance:} For any $g \in G$, 
        \begin{align*}
            \tilde{F}_i(g(x, y), t) &= \tilde{F}_i((gx, gy), t) \\
            &= \left( gxH, \, gx \cdot F_i((gx)^{-1}gy, t) \right) \\
            &= \left( gxH, \, gx \cdot F_i(x^{-1}y, t) \right) \\
            &= g \cdot \tilde{F}_i((x, y), t).
        \end{align*}
        Thus, $\tilde{F}_i$ is a $G$-homotopy.
        
        \item \textbf{Initial Map:} At $t = 0$, utilizing the fact that $f$ is a homomorphism:
        \begin{align*}
            \tilde{F}_i((x, y), 0) &= \left( xH, \, x \cdot f(x^{-1}y) \right) \\
            &= \left( xH, \, x (x^{-1}y H) \right) \\
            &= (xH, yH) \\
            &= (f \times f)(x, y).
        \end{align*}
        
        \item \textbf{Final Map:} At $t = 1$, we have:
        $$ \tilde{F}_i((x, y), 1) = (xH, x z_i H) = x \cdot (eH, z_i H). $$
        This shows that $\tilde{F}_i(-, 1)$ maps all of $U_i$ into the single $G$-orbit generated by the point $(eH, z_i H) \in G/H \times G/H$.
    \end{enumerate}

    Because $(f \times f)|_{U_i}$ is $G$-homotopic to a map into a single orbit for each set in the $G$-invariant cover, we conclude that $\cat_G(f \times f) \leq n = \cat(f)$. 
    
    Combining our inequalities, we have $\cat(f) \leq \TC_G^{Sc}(f) \leq \cat_G(f \times f) \leq \cat(f)$, forcing equality throughout.
\end{proof}
We now produce a map version of \cite[Proposition 5.12]{Colman_Grant_Eqtc}, which relates equivariant topological complexity with equivariant LS-category for a homomorphism between topological groups.
\begin{theorem}
    Let $f\colon  X \to Y$ be a $G$-equivariant continuous group homomorphism between topological groups. Assume $G$ acts on $X$ via topological group automorphisms, and that $Y$ is $G$-connected, and that $Y$ has a $ G$-fixed point. Then $\TC_G^{Sc}(f) = \cat_G(f)$.
\end{theorem}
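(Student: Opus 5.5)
We prove the two inequalities $\cat_G(f)\leq \TC_G^{Sc}(f)$ and $\TC_G^{Sc}(f)\leq \cat_G(f)$ separately, using the group structures to trade the diagonal action on $X\times X$ for the action on a single factor via the "difference" map, as in the proof of the preceding theorem.

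\textbf{Lower bound.} The identity $e\in X$ is a $G$-fixed point, since $G$ acts by automorphisms. We follow the proof of Proposition~\ref{prop: scott cat ub lb}(1) with $z=e$ and stabilizer $H=G_e=G$. The maps $j\colon X\to X\times X$, $x\mapsto(e,x)$, and $j'\colon Y\to Y\times Y$, $y\mapsto (e,y)$, are then genuine $G$-maps, since $f(e)=e$ is $G$-fixed. Pulling $\pi_Y$ back along $j'$ gives the based path fibration $p\colon PY\to Y$, with $PY=\{\gamma\mid\gamma(0)=e\}$. A $G$-section over $U\subseteq X\times X$ relative to $f\times f$ restricts to a $G$-section of $p$ over $j^{-1}(U)$ relative to $f$. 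Hence $\gsc{G,f}{p}\leq\TC_G^{Sc}(f)$. The space $PY$ is $G$-contractible by shrinking paths to the fixed point $e$, so Proposition~\ref{prop: secat lb by cat} gives $\cat_G(f)\leq\gsc{G,f}{p}$.

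\textbf{Upper bound.} Let $\{V_0,\dots,V_n\}$ be a $G$-invariant open cover of $X$ with $G$-homotopies $F_i\colon V_i\times I\to Y$ from $f|_{V_i}$ to maps into single orbits. Since $Y$ is $G$-connected with a fixed point $e$, Lemma~\ref{lem: conservation of isotropy} lets us concatenate each $F_i$ with a deformation of that orbit to $e$. So we may assume $F_i(-,1)\equiv e$, exactly as in Case (2) of Proposition~\ref{prop: secat ub by cat}.

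Define $\alpha\colon X\times X\to X$ by $\alpha(x,y)=x^{-1}y$. It is a $G$-map for the diagonal action because $G$ acts by automorphisms: $\alpha(gx,gy)=(gx)^{-1}(gy)=g(x^{-1}y)$. Set $U_i=\alpha^{-1}(V_i)$; these form a $G$-invariant open cover of $X\times X$. On $U_i$ define
\[
s_i(x,y)(t)=f(x)\cdot F_i(x^{-1}y,\,1-t)\in Y^I .
\]
This is a $G$-map, since $g$ acts on $Y$ by automorphisms and $F_i$ is equivariant. Its endpoints are $s_i(x,y)(0)=f(x)f(x^{-1}y)=f(y)$ and $s_i(x,y)(1)=f(x)e=f(x)$. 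Reversing the parameter (or composing with the swap, which is $G$-equivariant) gives a path from $f(x)$ to $f(y)$, so $\pi_Y\circ s_i=(f\times f)|_{U_i}$ strictly. Thus $\TC_G^{Sc}(f)\leq n$.

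\textbf{Main obstacle.} The delicate point is equivariance in the upper bound. The action on $Y$ must be by automorphisms so that $g\cdot(ab)=(ga)(gb)$, and $f$ must be $G$-equivariant. If the action on $Y$ were not by automorphisms, I would instead try to use only $Y$'s $G$-connectivity together with the fixed identity. The first thing to verify, however, is that the hypotheses (action on $Y$ by automorphisms) are intended to cover this case.
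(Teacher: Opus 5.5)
Your proposal is correct and is essentially the paper's proof. The lower bound is Proposition~\ref{prop: scott cat ub lb}(1) applied with $H=G_{e_X}=G$, which you re-derive. The upper bound uses the same difference-map construction: the paper takes $xy^{-1}$ and right-translates by $f(y)$, while you take $x^{-1}y$ and left-translate by $f(x)$. As you point out, the paper's claim that these sections are $G$-equivariant tacitly requires $G$ to act on $Y$ by automorphisms, a hypothesis the statement does not state.

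One small slip: your formula $s_i(x,y)(t)=f(x)\cdot F_i(x^{-1}y,1-t)$ already satisfies $s_i(x,y)(0)=f(x)$ and $s_i(x,y)(1)=f(x)f(x^{-1}y)=f(y)$. So your endpoint computation is swapped, and no reversal is needed; reversing would in fact give a path from $f(y)$ to $f(x)$.
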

\begin{proof}
    First, because $G$ acts on $X$ via group automorphisms, the identity element $e_X \in X$ is fixed by $G$; hence, its isotropy group is $G_{e_X} = G$. Since $X$ contains a $G$-fixed point, we can apply Proposition~\ref{prop: scott cat ub lb}(1) to obtain the lower bound $\TC_{G}^{Sc}(f) \geq \cat_G(f)$.

    For the reverse inequality, assume $\cat_G(f) = n$. Then there exists a $G$-invariant open cover $\{U_0, \dots, U_n\}$ of $X$ and $G$-equivariant homotopies $F_i\colon  U_i \times I \to Y$ such that $F_i(-, 0) = f|_{U_i}$, and $F_i(-, 1)$ maps $U_i$ into a single $G$-orbit. Since $Y$ is $G$-connected having a fixed point, by Lemma~\ref{lem: conservation of isotropy}, this target orbit can be chosen to be the fixed-point orbit $\mathcal{O}_{e_Y} = \{e_Y\}$, where $e_Y$ is the identity in $Y$. 

    For each $i \in \{0, \dots, n\}$, define the set
    $$ V_i = \{ (x, y) \in X \times X \mid xy^{-1} \in U_i \}. $$
    Because the group operations (multiplication and inversion) are continuous and the $G$-action is via automorphisms, the map $(x, y) \mapsto xy^{-1}$ is continuous and $G$-equivariant. Consequently, each $V_i$ is a $G$-invariant open subset of $X \times X$, and the collection $\{V_0, \dots, V_n\}$ forms an open cover of $X \times X$.

    We now construct a $G$-equivariant local section $\sigma_i\colon  V_i \to Y^I$ relative to $f \times f$ over each $V_i$ by setting
    $$ \sigma_i(x, y)(t) = F_i(xy^{-1}, t) \cdot f(y). $$
    Because $F_i$ and $f$ are $G$-equivariant, $\sigma_i$ is a well-defined $G$-equivariant map. We evaluate the endpoints of the path. At $t = 0$, utilizing the fact that $f$ is a homomorphism:
    $$ \sigma_i(x, y)(0) = F_i(xy^{-1}, 0) \cdot f(y) = f(xy^{-1}) \cdot f(y) = f(x)f(y)^{-1}f(y) = f(x). $$
    At $t = 1$:
    $$ \sigma_i(x, y)(1) = F_i(xy^{-1}, 1) \cdot f(y) = e_Y \cdot f(y) = f(y). $$
    Thus, the path $\sigma_i(x, y)$ connects $f(x)$ to $f(y)$. This implies $\pi_Y \circ \sigma_i \simeq_G (f \times f)|_{V_i}$, successfully establishing the required local sections for topological complexity. 
    
    Therefore, $\TC_G^{Sc}(f) \leq n = \cat_G(f)$. Combining our inequalities yields $\TC_G^{Sc}(f) = \cat_G(f)$.
\end{proof}

\subsection{Murillo-Wu's version}\label{subsec: eq tc MW version}
We have the following equivariant generalization of the relative sectional category introduced by Murillo-Wu \cite{JessupMurilloParent2012}.
\begin{definition}
Let $E \xra p B\xra f Y$ be $G$-maps. \emph{The equivariant relative sectional category} (in the sense of Murillo-Wu)   
    $\sct_{G,f}^{MW}{(p)}$ is defined to be the smallest integer $n$ for which there exists a $G$-invariant open cover $\{U_0,\dots, U_n\}$ of $B$ with $G$-maps $s_i\colon U_i\ra E$ for $i=0, \dots, n$ such that $fps_i\simeq_Gf|_{U_i}$.
    Clearly, $$\sct_{G,f}^{MW}{(p)}=\sct_{G,f}{(fp)} .$$
\end{definition}
We note that $f\simeq_G f'$ and $p\simeq_G p'$ imply $\sct_{G,f}^{MW}{(p)}=\sct_{G,f'}^{MW}{(p')} .$
\begin{definition}
    Given a $G$-map $f\colon  X \to Y$, the \emph{equivariant topological complexity} of $f$ (in the sense of Murillo-Wu) is defined to be $\TC_G^{MW}(f) = \sct_{G, f \times f}^{MW}(\Delta_X)$, where $\Delta_X\colon  X \to X \times X$ denotes the diagonal map.
\end{definition}
We follow the approach of Scott \cite[Theorem 3.4]{Sc} to establish the equivalence of the various notions of the topological complexity of a map.
\begin{proposition}\label{prop: Scott=MW}
    For a $G$-map $f\colon  X \to Y$, we have $\TC_G^{Sc}(f) = \TC_G^{MW}(f)$.  
\end{proposition}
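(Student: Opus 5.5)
The plan is to unwind both definitions into statements about $G$-invariant open subsets $U\subseteq X\times X$, and to show that a set admits a local section for one notion exactly when it admits one for the other. By definition,
\[
\TC_G^{MW}(f)=\sct^{MW}_{G,f\times f}(\Delta_X)=\sct_{G,f\times f}\bigl((f\times f)\circ\Delta_X\bigr)=\sct_{G,f\times f}(\Delta_Y\circ f).
\]
So $U$ is admissible for Murillo--Wu when there is a $G$-map $s\colon U\to X$ with $\Delta_Y\circ f\circ s\simeq_G (f\times f)|_U$. For Scott, $U$ is admissible when there is a $G$-map $\sigma\colon U\to Y^I$ with $\pi_Y\circ\sigma\simeq_G (f\times f)|_U$. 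Once the admissible sets coincide, the two minima agree.

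\emph{Step 1: Murillo--Wu implies Scott.} Since $\pi_Y\colon Y^I\to Y\times Y$ and $\Delta_Y$ are related by the $G$-map $c\colon Y\to Y^I$ sending $y$ to the constant path, with $\pi_Y\circ c=\Delta_Y$, we may put $\sigma=c\circ f\circ s$. Then
\[
\pi_Y\circ\sigma=\Delta_Y\circ f\circ s\simeq_G (f\times f)|_U .
\]
Equivalently, this is the first change-of-codomain comparison of Section 3 (the one with $\gamma=c\circ f$), applied to $p=\Delta_Y\circ f$ and $p'=\pi_Y$. This gives $\TC_G^{Sc}(f)\le\TC_G^{MW}(f)$.

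\emph{Step 2: Scott implies Murillo--Wu.} Suppose $\sigma\colon U\to Y^I$ satisfies $\pi_Y\circ\sigma\simeq_G (f\times f)|_U$. Take $s=\mathrm{pr}_1|_U\colon U\to X$, which is a $G$-map. It remains to produce a $G$-homotopy
\[
(f\circ \mathrm{pr}_1,\, f\circ\mathrm{pr}_1)\simeq_G (f\circ\mathrm{pr}_1,\, f\circ\mathrm{pr}_2)\quad\text{on } U.
\]
Let $H\colon U\times I\to Y\times Y$ be a $G$-homotopy from $\pi_Y\circ\sigma$ to $(f\times f)|_U$, and write $H=(H_1,H_2)$. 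For each $u\in U$, form the path
\[
\bar H_1(u)\ast\sigma(u)\ast H_2(u)
\]
in $Y$, which runs from $f(\mathrm{pr}_1 u)$ to $f(\mathrm{pr}_2 u)$. Concatenation and reversal are performed pointwise in $Y$, and $G$ acts on paths pointwise, so this assignment defines a $G$-map $\tau\colon U\to Y^I$. Hence we may assume $\pi_Y\circ\sigma=(f\times f)|_U$ strictly.

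Now define
\[
K(u,t)=\bigl(f(\mathrm{pr}_1 u),\ \sigma(u)(t)\bigr).
\]
At $t=0$ this is $\Delta_Y f s(u)$, and at $t=1$ it is $(f\times f)(u)$. Since $K(gu,t)=gK(u,t)$, it is a $G$-homotopy, and so $\TC_G^{MW}(f)\le\TC_G^{Sc}(f)$.

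The main obstacle is the rectification in Step 2, namely checking that the concatenated paths depend continuously and $G$-equivariantly on $u$. This reduces to continuity of concatenation in the compact-open topology together with the pointwise nature of the action. As an alternative, one could apply Proposition~\ref{prop: rel secat=secat hpb}, using that $\pi_Y$ is a $G$-fibration, to replace homotopy sections by strict sections of the pullback. Everything else is formal, and since both steps act on a single open set, applying them to a whole $G$-invariant cover gives equality.
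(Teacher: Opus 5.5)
Your argument is correct, and its overall shape (showing that a $G$-invariant open set $U\subseteq X\times X$ admissible for one notion is admissible for the other) is the paper's; the differences are in the individual steps.

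Your Step 2 is the paper's first half with the coordinates swapped. The paper takes $s=\mathrm{pr}_2$ and the homotopy $(\sigma(x_1,x_2)(t),f(x_2))$, but it evaluates the endpoints as if $\pi_Y\circ\sigma=(f\times f)|_U$ held strictly. Your rectification $\bar H_1\ast\sigma\ast H_2$ (or, as you note, the $G$-homotopy lifting property of $\pi_Y$) supplies that justification.

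Your Step 1 is genuinely different. The paper takes the $G$-homotopy $F$ from $(f\times f)|_U$ into the diagonal. It glues $p_1\circ F$ to the reverse of $p_2\circ F$, giving an explicit path from $f(x_1)$ to $f(x_2)$, i.e.\ a strict section of $\pi_Y$ relative to $f\times f$. You instead observe that $c\circ f\circ s$ is already a homotopy section, since $\pi_Y\circ c\circ f\circ s=\Delta_Y\circ f\circ s\simeq_G (f\times f)|_U$. That is all the definition of $\TC_G^{Sc}(f)$ requires, so this direction becomes a formal instance of the change-of-codomain comparison with $\gamma=c\circ f$.

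Your route is shorter and needs no path construction. The paper's gives the slightly stronger conclusion that every Murillo--Wu-admissible set carries a strict section of $\pi_Y$ relative to $f\times f$.
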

\begin{proof}
    First, we show that $\TC_G^{Sc}(f) \geq \TC_G^{MW}(f)$.
    Let $U \subseteq X \times X$ be a $G$-invariant open set on which there exists a $G$-equivariant local section $\sigma\colon  U \to Y^I$ such that $\pi_Y \circ \sigma \simeq_G (f \times f)|_U$.
    
    We can define a $G$-homotopy $F\colon  U \times I \to Y \times Y$ by 
    $$ F(x_1, x_2, t) = (\sigma(x_1, x_2)(t), f(x_2)) $$
    for $(x_1, x_2) \in U$. 
    Note that at $t = 0$, $F(x_1, x_2, 0) = (f(x_1), f(x_2))$. At $t = 1$, 
    $$ F(x_1, x_2, 1) = (f(x_2), f(x_2)) = (f \times f) \circ \Delta_X \circ s(x_1, x_2), $$
    where $s\colon  U \to X$ denotes the $G$-map given by the projection $s(x_1, x_2) = x_2$.
    Thus, we have $(f \times f) \circ \Delta_X \circ s\simeq_G(f \times f)|_U.$
    This establishes the required deformation, proving $\TC_G^{Sc}(f) \geq \TC_G^{MW}(f)$.

    Next, we show that $\TC_G^{Sc}(f) \leq \TC_G^{MW}(f)$.
    Let $U \subseteq X \times X$ be a $G$-invariant open set on which there exists a $G$-equivariant map $s\colon  U \to X$ such that $$(f \times f) \circ \Delta_X \circ s \simeq_G (f \times f)|_U.$$ This implies we have a deformation of $(f \times f)|_U$ into the diagonal $\Delta(Y) \subseteq Y \times Y$ via a $G$-homotopy $F\colon  U \times I \to Y \times Y$.

    We can then define a local section $\sigma\colon  U \to Y^I$ by
    $$ \sigma(x_1, x_2)(t) = 
    \begin{cases}
        p_1 \circ F(x_1, x_2, 2t) \quad &\text{for } t \in [0, 1/2], \\  
        p_2 \circ F(x_1, x_2, 2 - 2t) \quad &\text{for } t \in [1/2, 1].
    \end{cases} $$
    We note that $\sigma$ is a well-defined $G$-map because at $t = 1/2$, the boundary pieces agree: $F(x_1, x_2, 1)$ lies on the diagonal $\Delta(Y)$, meaning $p_1(F(x_1, x_2, 1)) = p_2(F(x_1, x_2, 1))$. 

    Moreover, evaluating the endpoints of the path yields $\sigma(x_1, x_2)(0) = f(x_1)$ and $\sigma(x_1, x_2)(1) = f(x_2)$. Thus, we have $\pi_Y \circ \sigma \simeq_G (f \times f)|_U$. This proves $\TC_G^{Sc}(f) \leq \TC_G^{MW}(f)$, concluding the proof.
\end{proof}
\begin{remark}
In view of Proposition \ref{prop: Scott=MW}, which establishes the equivalence between $\mathrm{TC}_G^{Sc}(f)$ and $\mathrm{TC}_G^{MW}(f)$, any equivariant analogue of the results of Murillo and Wu would not provide additional information in this setting. Consequently, we do not develop a separate equivariant version of their theory, since it is already encompassed by the established equivalence.
\end{remark}

\subsection{Examples}\label{subsec: examples}
We now use the various inequalities established in the preceding sections to compute the topological complexity of maps.

\begin{example}\normalfont{
    Let the circle group $G = S^1$ act on the odd-dimensional sphere $S^{2m+1} \subseteq \mathbb{C}^{m+1}$ (for $m \geq 1$) via scalar multiplication:
    $$ \xi \cdot (z_0, z_1, \dots, z_m) = (\xi z_0, \xi z_1, \dots, \xi z_m), $$
    where $\xi \in S^1$. This action yields the standard quotient map (the Hopf fibration)
    $$ f\colon  S^{2m+1} \to \mathbb{CP}^m. $$

    We note that the motion planning algorithm given in \cite[Theorem 8]{FarberTC} is naturally $G$-equivariant for this $S^1$-action, which gives $\TC_G(S^{2m+1}) \leq 1$. The explicit arguments can be found in \cite[Lemma 4.2]{G-V}. For a quick verification, consider the subsets
    $$ U_0 = \{ (x, y) \in S^{2m+1} \times S^{2m+1} \mid x \neq -y \} $$
    and
    $$ U_1 = \{ (x, y) \in S^{2m+1} \times S^{2m+1} \mid x \neq y \}. $$
    These sets form a $G$-invariant open cover of $S^{2m+1} \times S^{2m+1}$. 
    
    The equivariant local section on $U_0$ is given by a map $\sigma_0\colon  U_0 \to (S^{2m+1})^I$, which sends a pair $(x, y)$ to the unique minimal geodesic from $x$ to $y$.

    On $U_1$, we define the local section $\sigma_1\colon  U_1 \to (S^{2m+1})^I$, which sends a pair $(x, y)$ to the concatenation of two paths: the unique minimal geodesic from $x$ to $-y$, followed by a path from $-y$ to $y$ defined by
    $$ t \mapsto -\cos(\pi t) y + \sin(\pi t) \frac{v(y)}{|v(y)|}, $$
    where $v$ is a non-vanishing $G$-equivariant vector field on the odd sphere $S^{2m+1}.$

    By Proposition~\ref{prop: scotts tc ineq tcX tcY}, we have the upper bound
    $$ \TC_G^{Sc}(f) \leq \TC_G(S^{2m+1}) \leq 1. $$
    Furthermore, because $f$ is not null-homotopic, by Corollary~\ref{cor: cat TC relation_contractibility}(2), its non-equivariant topological complexity satisfies $\TC^{Sc}(f) \geq 1$. By Corollary~\ref{cor: restricted Scott TC}, this provides the lower bound
    $$ \TC_G^{Sc}(f) \geq \TC^{Sc}(f) \geq 1. $$
    Combining these inequalities, we conclude that $\TC_G^{Sc}(f) = 1$.}
\end{example}

\begin{example}\normalfont{
    Let $G=C_2$ act on the space $X=S^2\times S^2$ by $\xi\cdot (x,y)=(y,x)$ where $\xi$ is the generator of $C_2.$ Let us consider the induced quotient map $f\colon  X\to X/C_2$, which can be regarded as a $G$-map with the trivial action on the codomain. It is known that the symmetric product space $\mathrm{SP_2}(S^2)\cong \mathbb{CP}^2$, thus $Y=\mathbb{CP}^2$.
    
    To obtain a lower bound for the equivariant LS-category, we consider the multiplicative cohomology theory $h_G^\ast(-)=H^\ast(-/G,\mathbb Q)$.
    Since the induced map on the quotient $f/G\colon  X/G \to Y$ is simply the identity map on $\mathbb{CP}^2$, the induced cohomology homomorphism $(f/G)^\ast$ is the identity on $H^\ast(\mathbb{CP}^2; \mathbb Q)$. 

    The rational cohomology ring is $H^\ast(\mathbb{CP}^2; \mathbb Q) \cong \mathbb Q[a]/(a^3)$ with $\vert{}a\vert{} = 2$. Since $a^2 \neq 0$, the classical cup-length is $\cl(X/G) = 2$. Consequently, the relative equivariant cup-length is $\cl_G(f) = 2$.

    Because $Y$ has non-empty fixed points (the action is trivial, so $Y^G = Y$), Proposition~\ref{prop: cat geq cup} yields the lower bound $\cat_G(f) \geq 2$. By Proposition~\ref{prop: cat leq dim con}, we also have the upper bound $\cat_G(f) \leq 2$, forcing $\cat_G(f) = 2$.

    The fixed-point set of $X$ is the diagonal $X^G = \{(x, x) \mid x \in S^2\} \cong S^2$. Because $X/G$ is $G$-connected and $X$ possesses non-empty fixed points, Corollary~\ref{cor: cat TC relation_contractibility}(1) provides the upper bound:
$$\TC_G^{Sc}(f) \leq 2 \cat_G(f) = 4.$$

    To establish a lower bound for the topological complexity, we examine the kernel of the diagonal map $\ker(\Delta_Y^\ast)$, which is generated by the class $\alpha = a \otimes 1 - 1 \otimes a$. A direct binomial expansion yields
$$\alpha^4 = 6a^2 \otimes a^2 \neq 0.$$
We now analyze the induced quotient map $(f \times f)/G$. Let $\Delta G = \{(g, g) \mid g \in G\}$ denote the diagonal subgroup of $G \times G$. The map $(f \times f)/G$ can be viewed as the canonical projection:
$$(X \times X)/\Delta G \to (X/G) \times (X/G) \cong (X \times X)/(G \times G)\cong ((X\times X)/\Delta G)/L,$$
where $L = (G \times G)/\Delta G$. It is known that the rational cohomology of finite group quotients, $H^\ast(Z/L; \mathbb Q) \cong H^\ast(Z; \mathbb Q)^L$. Thus, the induced map $((f \times f)/G)^\ast$ corresponds to the natural inclusion of the $L$-invariant cohomology classes. 

Because $((f \times f)/G)^\ast$ is an injective map in rational cohomology, the non-zero class $\alpha^4$ pulls back to a non-zero class: $((f \times f)/G)^\ast(\alpha^4) \neq 0$. By Proposition~\ref{prop: lb ub for Scott TC}, this non-vanishing zero-divisor cup-length strictly implies $\TC_G^{Sc}(f) \geq 4$. 

Combining our upper and lower bounds, we conclude that $\TC_G^{Sc}(f) = 4$.}
\end{example}

\section{Acknowledgments}
Navnath Daundkar gratefully acknowledges the support of the DST–INSPIRE Faculty Fellowship (Faculty Registration No.~IFA24-MA218), Department of Science and Technology, Government of India; the Industrial Consultancy and Sponsored Research (IC\&SR), Indian Institute of Technology Madras, through the New Faculty Initiation Grant (RF25261395MANFIG009294).  Bittu Singh is grateful to the Prime Minister's Research Fellowship (PMRF ID 1302644), Government of India, for his financial support.
\bibliographystyle{plain} 
\bibliography{ref_eqtc}

@article {G-V,
    AUTHOR = {Gonz\'alez, Jes\'us and Velasco, Maurilio},
     TITLE = {Complex-projective and lens product spaces},
   JOURNAL = {Bol. Soc. Mat. Mex. (3)},
  FJOURNAL = {Bolet\'in de la Sociedad Matem\'atica Mexicana. Third Series},
    VOLUME = {20},
      YEAR = {2014},
    NUMBER = {2},
     PAGES = {319--333},
      ISSN = {1405-213X,2296-4495},
   MRCLASS = {55R25 (55M30 55P15 57R42)},
  MRNUMBER = {3264620},
MRREVIEWER = {J.\ M.\ Boardman},
       DOI = {10.1007/s40590-014-0019-5},
       URL = {https://doi.org/10.1007/s40590-014-0019-5},
}

@article{M-W,
  title={Topological complexity of the work map},
  author={Murillo, Aniceto and Wu, Jie},
  journal={Journal of Topology and Analysis},
  volume={13},
  number={01},
  pages={219--238},
  year={2021},
  publisher={World Scientific}
}

@article{G-symmetrized,
  title={Symmetrized topological complexity},
  author={Grant, Mark},
  journal={Journal of Topology and Analysis},
  volume={11},
  number={02},
  pages={387--403},
  year={2019},
  publisher={World Scientific}
}

@incollection {Colmaneqcat,
    AUTHOR = {Colman, Hellen},
     TITLE = {Equivariant {LS}-category for finite group actions},
 BOOKTITLE = {Lusternik-{S}chnirelmann category and related topics ({S}outh
              {H}adley, {MA}, 2001)},
    SERIES = {Contemp. Math.},
    VOLUME = {316},
     PAGES = {35--40},
 PUBLISHER = {Amer. Math. Soc., Providence, RI},
      YEAR = {2002},
      ISBN = {0-8218-2800-2},
   MRCLASS = {55M30 (55M35)},
  MRNUMBER = {1962151},
MRREVIEWER = {Jeffrey\ A.\ Strom},
       DOI = {10.1090/conm/316/05493},
       URL = {https://doi.org/10.1090/conm/316/05493},
}

@article {Angel-Colman-Grant-Oprea-Moritainv-eqcat,
    AUTHOR = {Angel, A. and Colman, H. and Grant, M. and Oprea, J.},
     TITLE = {Morita invariance of equivariant {L}usternik-{S}chnirelmann
              category and invariant topological complexity},
   JOURNAL = {Theory Appl. Categ.},
  FJOURNAL = {Theory and Applications of Categories},
    VOLUME = {35},
      YEAR = {2020},
     PAGES = {Paper No. 7, 179--195},
      ISSN = {1201-561X},
   MRCLASS = {55M30 (55R91)},
  MRNUMBER = {4066806},
MRREVIEWER = {Jose\ M.\ Garc\'ia Calcines},
}

@article {Clapp-Puppe,
    AUTHOR = {Clapp, M\'onica and Puppe, Dieter},
     TITLE = {Invariants of the {L}usternik-{S}chnirelmann type and the
              topology of critical sets},
   JOURNAL = {Trans. Amer. Math. Soc.},
  FJOURNAL = {Transactions of the American Mathematical Society},
    VOLUME = {298},
      YEAR = {1986},
    NUMBER = {2},
     PAGES = {603--620},
      ISSN = {0002-9947,1088-6850},
   MRCLASS = {55M30 (55P50)},
  MRNUMBER = {860382},
MRREVIEWER = {E.\ Fadell},
       DOI = {10.2307/2000638},
       URL = {https://doi.org/10.2307/2000638},
}

@incollection {Fadelleqcat,
    AUTHOR = {Fadell, E.},
     TITLE = {The equivariant {L}justernik-{S}chnirelmann method for
              invariant functionals and relative cohomological index
              theories},
 BOOKTITLE = {Topological methods in nonlinear analysis},
    SERIES = {S\'em. Math. Sup.},
    VOLUME = {95},
     PAGES = {41--70},
 PUBLISHER = {Presses Univ. Montr\'eal, Montreal, QC},
      YEAR = {1985},
      ISBN = {2-7606-0712-7},
   MRCLASS = {58E05},
  MRNUMBER = {801933},
MRREVIEWER = {Marco\ Degiovanni},
}

@article{D-GC,
  title={Equivariant homotopic distance},
  author={Daundkar, Navnath and Garc{\'\i}a-Calcines, JM},
  journal={arXiv preprint arXiv:2508.20485, Topological Methods in Nonlinear Analysis to appear},
  year={2026}
}

@article {James,
    AUTHOR = {James, I. M.},
     TITLE = {On category, in the sense of {L}usternik-{S}chnirelmann},
   JOURNAL = {Topology},
  FJOURNAL = {Topology. An International Journal of Mathematics},
    VOLUME = {17},
      YEAR = {1978},
    NUMBER = {4},
     PAGES = {331--348},
      ISSN = {0040-9383},
   MRCLASS = {55M30 (58E05)},
  MRNUMBER = {516214},
MRREVIEWER = {Christian Fenske},}

@article{LScat,
	author = {Lyusternik, Lazar and \v{S}nirelmann, Levi},
	volume = {42},
	publisher = {Presses Universitaires de France},
	title = {M\'{e}thodes Topologiques Dans les Probl\`{e}mes Variationnels},
	year = {1934},
	journal = {Actualit\'{e}s scientifiques et industrielles, vol. 188, Expos\'{e}s sur l’analyse math\'{e}matique et ses applications, vol.3, Hermann, Paris},
	pages = {},
	number = {}
}

@Article{secat,
 Author = {Berstein, I. and Ganea, T.},
 Title = {The category of a map and of a cohomology class},
 FJournal = {Fundamenta Mathematicae},
 Journal = {Fundam. Math.},
 ISSN = {0016-2736},
 Volume = {50},
 Pages = {265--279},
 Year = {1962},
 Language = {English},
 DOI = {10.4064/fm-50-3-265-279},
 zbMATH = {3305695},
 Zbl = {0192.29302}
}

@article {Eqlscategory,
    AUTHOR = {Marzantowicz, Wac{\l}aw},
     TITLE = {A {$G$}-{L}usternik-{S}chnirelman category of space with an
              action of a compact {L}ie group},
   JOURNAL = {Topology},
  FJOURNAL = {Topology. An International Journal of Mathematics},
    VOLUME = {28},
      YEAR = {1989},
    NUMBER = {4},
     PAGES = {403--412},
      ISSN = {0040-9383},
   MRCLASS = {55M30 (57S15 58E05)},
  MRNUMBER = {1030984},
MRREVIEWER = {S\"{o}ren Illman},}

@article {Sva,
    AUTHOR = {\v{S}varc, Albert S.},
     TITLE = {The genus of a fiber space},
   JOURNAL = {Dokl. Akad. Nauk SSSR (N.S.)},
  FJOURNAL = {Doklady Akademii Nauk SSSR},
    VOLUME = {119},
      YEAR = {1958},
     PAGES = {219--222},
      ISSN = {0002-3264},
   MRCLASS = {55.00},
  MRNUMBER = {0102812},
MRREVIEWER = {J. Stasheff},
}

@article {Symm_grant,
    AUTHOR = {Grant, Mark},
     TITLE = {Symmetrized topological complexity},
   JOURNAL = {J. Topol. Anal.},
  FJOURNAL = {Journal of Topology and Analysis},
    VOLUME = {11},
      YEAR = {2019},
    NUMBER = {2},
     PAGES = {387--403},
      ISSN = {1793-5253,1793-7167},
   MRCLASS = {55M30 (55P91 55R91 55S15 55S40 68T40)},
  MRNUMBER = {3958926},
MRREVIEWER = {Marzieh\ Bayeh},
       DOI = {10.1142/S1793525319500183},
       URL = {https://doi.org/10.1142/S1793525319500183},
}

@article {Colman_Grant_Eqtc,
    AUTHOR = {Colman, Hellen and Grant, Mark},
     TITLE = {Equivariant topological complexity},
   JOURNAL = {Algebr. Geom. Topol.},
  FJOURNAL = {Algebraic \& Geometric Topology},
    VOLUME = {12},
      YEAR = {2012},
    NUMBER = {4},
     PAGES = {2299--2316},
      ISSN = {1472-2747,1472-2739},
   MRCLASS = {55M30 (57S10)},
  MRNUMBER = {3020208},
MRREVIEWER = {Ian\ Hambleton},
       DOI = {10.2140/agt.2012.12.2299},
       URL = {https://doi.org/10.2140/agt.2012.12.2299},
}

@article {Calcines_relTC,
    AUTHOR = {Garc\'ia-Calcines, J. M.},
     TITLE = {Relative sectional category revisited},
   JOURNAL = {Expo. Math.},
  FJOURNAL = {Expositiones Mathematicae},
    VOLUME = {42},
      YEAR = {2024},
    NUMBER = {5},
     PAGES = {Paper No. 125590, 22},
      ISSN = {0723-0869,1878-0792},
   MRCLASS = {55M30 (55M15 55P99)},
  MRNUMBER = {4775632},
       DOI = {10.1016/j.exmath.2024.125590},
       URL = {https://doi.org/10.1016/j.exmath.2024.125590},
}

@article {Hopf_inv_Gon_Luc_Grant,
    AUTHOR = {Gonz\'alez, Jes\'us and Grant, Mark and Vandembroucq, Lucile},
     TITLE = {Hopf invariants for sectional category with applications to
              topological robotics},
   JOURNAL = {Q. J. Math.},
  FJOURNAL = {The Quarterly Journal of Mathematics},
    VOLUME = {70},
      YEAR = {2019},
    NUMBER = {4},
     PAGES = {1209--1252},
      ISSN = {0033-5606,1464-3847},
   MRCLASS = {55M30},
  MRNUMBER = {4045098},
MRREVIEWER = {Jose\ M.\ Garc\'ia Calcines},
       DOI = {10.1093/qmath/haz019},
       URL = {https://doi.org/10.1093/qmath/haz019},
}

@article{JessupMurilloParent2012,
  author    = {B. Jessup and A. Murillo and P.-E. Parent},
  title     = {Rational topological complexity},
  journal   = {Algebraic \& Geometric Topology},
  volume    = {12},
  number    = {3},
  year      = {2012},
  pages     = {1789--1801},
  doi       = {10.2140/agt.2012.12.1789}
}

@article {MACÍAS–VIRGÓS_MOSQUERA–LOIS_2022,
    AUTHOR = {Mac\'ias-Virg\'os, E. and Mosquera-Lois, D.},
     TITLE = {Homotopic distance between maps},
   JOURNAL = {Math. Proc. Cambridge Philos. Soc.},
  FJOURNAL = {Mathematical Proceedings of the Cambridge Philosophical
              Society},
    VOLUME = {172},
      YEAR = {2022},
    NUMBER = {1},
     PAGES = {73--93},
      ISSN = {0305-0041,1469-8064},
   MRCLASS = {55M30 (55P45 55R10 68T40)},
  MRNUMBER = {4354415},
MRREVIEWER = {Nicholas\ A.\ Scoville},
       DOI = {10.1017/S0305004121000116},
       URL = {https://doi.org/10.1017/S0305004121000116},
}

@article {Sc,
    AUTHOR = {Scott, Jamie},
     TITLE = {On the topological complexity of maps},
   JOURNAL = {Topology Appl.},
  FJOURNAL = {Topology and its Applications},
    VOLUME = {314},
      YEAR = {2022},
     PAGES = {Paper No. 108094, 25},
      ISSN = {0166-8641,1879-3207},
   MRCLASS = {55M30 (55R80)},
  MRNUMBER = {4406987},
MRREVIEWER = {Jose\ M.\ Garc\'ia Calcines},
       DOI = {10.1016/j.topol.2022.108094},
       URL = {https://doi.org/10.1016/j.topol.2022.108094},
}

@article{arora2024equivariant,
  title={Equivariant and invariant parametrized topological complexity},
  author={Arora, Ramandeep Singh and Daundkar, Navnath},
  journal={Proceedings of the Royal Society of Edinburgh: Section A Mathematics},
  year={2026},
  pages={1–44},
  DOI={10.1017/prm.2026.10147}
}

@article {FarberTC,
    AUTHOR = {Farber, Michael},
     TITLE = {Topological complexity of motion planning},
   JOURNAL = {Discrete Comput. Geom.},
  FJOURNAL = {Discrete \& Computational Geometry. An International Journal
              of Mathematics and Computer Science},
    VOLUME = {29},
      YEAR = {2003},
    NUMBER = {2},
     PAGES = {211--221},
      ISSN = {0179-5376},
   MRCLASS = {68T40 (55M30 68U05)},
  MRNUMBER = {1957228},
      }

@book{Bredon1972,
  author    = {Glen E. Bredon},
  title     = {Introduction to Compact Transformation Groups},
  series    = {Pure and Applied Mathematics},
  volume    = {46},
  publisher = {Academic Press},
  address   = {New York},
  year      = {1972},
  isbn      = {9780121297501}
}
\end{document}